\newcommand{\gf}[2]{\genfrac{}{}{0pt}{}{#1}{#2}}
\def\grade{\operatorname{grade}}
\def\Pf{\operatorname{Pf}}
\newtheorem{theorem}{Theorem}[section]
\newtheorem{lemma}[theorem]{Lemma}
\newtheorem{proposition}[theorem]{Proposition}
\newtheorem{claim}[theorem]{Claim}
\newtheorem{observation}[theorem]{Observation}
\theoremstyle{definition}
\newtheorem{definition}[theorem]{Definition}
\newtheorem{remark}[theorem]{Remark}
\newtheorem{remarks}[theorem]{Remarks}
\newtheorem{data}[theorem]{Data}
\newtheorem{example}[theorem]{Example}
\newtheorem{examples}[theorem]{Examples}
\newtheorem{chunk}[theorem]{}
\newtheorem{hypotheses}[theorem]{Hypotheses}
\newtheorem*{Remark}{Remark}
\newtheorem*{Remarks}{Remarks}
\newtheorem*{proof2}{Proof of Lemma \ref{87.15}}
\newtheorem*{proof3}{The proof of  Theorem \ref{main} (\ref{main-cx}, \ref{main.a}, \ref{main.new-b})}
\newtheorem*{proof3.5}{The proof of  Theorem \ref{main} (\ref{main.b}, \ref{PS}, \ref{main.new-e}, \ref{main.new-f}, \ref{main.new-g})}
\newtheorem*{proof4}{Proof of Theorem \ref{main.c}  (\ref{main.c.iii})}
\newtheorem*{gunm-def}{Proof  of the assertions in \ref{gunm}}
\numberwithin{equation}{theorem}
\numberwithin{table}{theorem}
\newcounter{tempcounter}
\begin{document}

\baselineskip=16pt

\title[A matrix of linear forms which is annihilated by a vector of indeterminates]{\bf A matrix of linear forms which is annihilated by a vector of indeterminates}
\date\today 
\author[Andrew R. Kustin, Claudia Polini, and Bernd Ulrich]
{Andrew R. Kustin, Claudia Polini, and Bernd Ulrich}

\thanks{AMS 2010 {\em Mathematics Subject Classification}.
Primary 13D02; Secondary 13C40, 13A30.}

\thanks{The first author was partially supported by the Simons Foundation.
The second and third authors were partially supported by the NSF}

\thanks{Keywords: Buchsbaum-Rim complex, depth sensitivity, determinantal ideal, divided power algebra, generalized Eagon-Northcott complexes, Gorenstein ideal, grade unmixed part of an ideal, Hilbert series, h-vector, matrix of linear forms, multiplicity, Pfaffians, Rees algebra, resilient ideals,   special fiber ring,  unmixed part of an ideal, total complex.}

\address{Department of Mathematics, University of South Carolina,
Columbia, SC 29208} \email{kustin@math.sc.edu}

\address{Department of Mathematics, 
University of Notre Dame
Notre Dame, IN 46556} \email{cpolini@nd.edu}

\address{Department of Mathematics,
Purdue University,
West Lafayette, IN 47907}\email{ulrich@math.purdue.edu}

\begin{abstract} 
Let $R=k[T_1,\dots,T_{\goth f}]$ be a standard  graded polynomial ring over the field $k$ and  $\Psi$ be an $\goth f\times \goth g$ matrix of linear forms from $R$, where $1\le \goth g<\goth f$.
 Assume  
$\bmatrix T_1&\cdots & T_{\goth f}\endbmatrix\Psi$ is $0$ 
and  that $\operatorname{grade} I_{\goth g}(\Psi)$ is exactly one short of the maximum possible grade.
We  resolve $\overline{R}=R/I_{\goth g}(\Psi)$, prove that $\overline{R}$ has a $\goth g$-linear resolution, record explicit formulas for the $h$-vector and multiplicity of $\overline{R}$, and prove that if $\goth f-\goth g$ is even, then the ideal $I_{\goth g}(\Psi)$ is unmixed. Furthermore, if $\goth f-\goth g$ is odd, then we identify an explicit generating set for the unmixed part, $I_{\goth g}(\Psi)^{\text{unm}}$, of 
$I_{\goth g}(\Psi)$, resolve $R/I_{\goth g}(\Psi)^{\text{unm}}$, and record explicit formulas for the $h$-vector of $R/I_{\goth g}(\Psi)^{\text{unm}}$. (The rings $R/I_{\goth g}(\Psi)$ and $R/I_{\goth g}(\Psi)^{\text{unm}}$ automatically have the same multiplicity.)
These results have applications to the study of the blow-up algebras associated to  
linearly presented grade three Gorenstein ideals. 
\end{abstract}

\maketitle

\tableofcontents
\section{Introduction.}\label{Introduction} 

It is shown in  \cite{Ei87} that, ``Determinantal ideals associated to `sufficiently general' matrices of linear forms are  {\it resilient} in the sense that they remain of the `expected' codimension, or prime, even modulo a certain number of linear forms.'' 
The cases where the original matrix is generic, symmetric generic, 
catalecticant, or $1$-generic have been particularly well-studied; see for example, \cite{Ei88, GS, K08, R, SS}. We study a family of resilient determinantal ideals of matrices of linear forms. The ideals we consider  are described by a structure condition and a grade condition. Each of these ideals has a linear resolution and defines a ring which is Cohen-Macaulay on the punctured spectrum, but is  not Cohen-Macaulay. The previously studied resilient ideals associated to  a matrix of linear forms have all defined rings which are Cohen-Macaulay.

For the time being, let $R$ be a  Noetherian ring, $T_1, \ldots , T_{\goth f}$ an $R$-regular sequence, and $\Psi$ an $\goth f\times \goth g$ matrix with entries in $R$, 
where $2\le \goth g<\goth f$. (Throughout most of the paper, $\goth g$ is allowed to take the value $1$; but to simplify the exposition, in the beginning of the introduction, we insist that $\goth g$ be at least 2.) Assume that the matrix $\Psi$ has two properties. First of all, assume that
$$\bmatrix T_1&\cdots & T_{\goth f}\endbmatrix\Psi=0 \, .$$ 
It is clear from this property that the ideal $I_{\goth g}(\Psi)$ generated by the maximal minors of $\Psi$ does not have the maximum possible grade, $ \goth f-\goth g +1 .$
The second property of $\Psi$ is that $\operatorname{grade}  I_{\goth g}(\Psi)$ is only one short of the maximum possible grade, that is, $\operatorname{grade}  I_{\goth g}(\Psi)= \goth f-\goth g .$
Let $\overline{R}$ be the $R$-algebra $R/I_{\goth g} (\Psi)$ and let $\delta= \goth f-\goth g$.

For each integer $\epsilon$ in the set $\{\frac{\delta-1}2,\frac{\delta}2,\frac{\delta+1}2\}$,
we construct a complex ${\mathbb M^{\epsilon}}$.
When $\epsilon$ is either $\frac{\delta}2$ or $\frac{\delta+1}2$, that is when $\epsilon =\lceil\frac{\delta}2\rceil$, then
 ${\mathbb M^{\epsilon}}$ is a resolution of $\overline{R}$ by free $R$-modules. Though the complex $\mathbb M^{\epsilon}$ may not be minimal, 
it can be used to see that the projective dimension of $\overline{R}$ over $R$ is $\goth f -1$ or ${\goth f}$ depending on whether $\delta$ is 
even or odd. We deduce that $\overline R$ is never perfect as an $R$-module, but is grade unmixed if $\delta$ is even, indeed 
${\rm depth} \ R_{\mathfrak p} = \delta$ for every associated prime ${\mathfrak p}$ of $\overline R.$ 
In particular, $I_{\goth g}(\Psi)$ is unmixed whenever $\delta$ is even and $R$ is Cohen-Macaulay. Despite the failure of perfection, 
the resolution $\mathbb M^\epsilon$ specializes, in the sense that $S\otimes_R \mathbb M^\epsilon$ is a resolution of $S\otimes_R{\overline R}$ by free $S$-modules
whenever $S$ is a Noetherian $R$-algebra, $T_1, \ldots, T_{\goth f}$ forms an $S$-regular sequence, and $\delta\le \operatorname{grade}  S  I_{\goth g}(\Psi)$.

We can say more if the ring $R$ is non-negatively graded with $R_0$ a field and $T_1, \ldots, T_{\goth f}$ as well as the entries of 
$\Psi$ are linear forms. In this case, the minimal homogeneous free $R$-resolution of $\overline{R}$ is $\goth g$-linear. The ring $\overline{R}$ is not Cohen-Macaulay; so  the Betti numbers in the minimal resolution of $\overline{R}$ and the multiplicity of  $\overline{R}$ are not an obvious consequence of the fact that the minimal homogeneous free $R$-resolution of $\overline{R}$ is $\goth g$-linear. Nonetheless,
 we provide explicit formulas for the $h$-vector and the multiplicity of $\overline{R};$ these results do not depend on the parity of $\delta$.

The multiplicity of $\overline{R}$ and the unmixedness of $I_{\goth g} (\Psi)$ play  critical roles 
in  \cite{KPU-BA}, where we describe explicitly, in terms of generators and relations, the special fiber ring and the Rees algebra of  
linearly presented grade three Gorenstein ideals in polynomial rings of odd Krull dimension.

On the other hand, if $\delta$ is odd, then the ideal $I_{\goth g}(\Psi)$ is mixed. We identify the grade unmixed part, $I_{\goth g}(\Psi)^{\text{gunm}}$,  of $I_{\goth g}(\Psi)$ (see (\ref{gunm})),  prove that $\mathbb M^{\epsilon}$, with $\epsilon=\frac{\delta-1}2$, is a resolution of $R/I_{\goth g}(\Psi)^{\text{gunm}}$, and provide explicit formulas for the $h$-vector of $R/I_{\goth g}(\Psi)^{\text{gunm}}$. (The rings $\overline{R}=R/I_{\goth g}(\Psi)$ and $R/I_{\goth g}(\Psi)^{\text{gunm}}$ automatically have the same multiplicity.)  Ultimately, the parameter $\epsilon$ takes the value 
$\lceil\frac{\delta}2\rceil$ or
$\lceil\frac{\delta-1}2\rceil$. If $\epsilon =\lceil\frac{\delta}2\rceil$, then $\mathbb M^\epsilon$ resolves $\overline{R}$; and if 
$\epsilon=\lceil\frac{\delta-1}2\rceil$, then $\mathbb M^\epsilon$ resolves $R/I_{\goth g}(\Psi)^{\text{gunm}}$. Of course, if $\delta$ is even, then  $\lceil\frac{\delta-1}2\rceil$ and $\lceil\frac{\delta}2\rceil$ are equal as are $I_{\goth g}(\Psi)$ and $I_{\goth g}(\Psi)^{\text{gunm}}$.

\bigskip

Take $\epsilon=\lceil\frac{\delta}2\rceil$. The resolution ${\mathbb M^\epsilon}$ was obtained in stages. First we identified a fairly standard complex $\operatorname{Tot}({\mathbb B})$ with $\operatorname{H}_0(\operatorname{Tot}({\mathbb B}))=\overline{R}$. The double complex ${\mathbb B}$ is a quotient of 
of a larger double complex $\mathbb V$: the columns of $\mathbb V$ are Koszul complexes and the rows of $\mathbb V$ are truncations of generalized Eagon-Northcott complexes.
Alas, $\operatorname{Tot}({\mathbb B})$ has higher homology. Indeed, when $\delta$ is even, $\operatorname{H}_1(\operatorname{Tot}({\mathbb B}))\neq 0$. A long look at $\operatorname{H}_1(\operatorname{Tot}({\mathbb B}))$ told us that this homology came from Pfaffians. Macaulay2 experimentation lead us to a complex $\operatorname{Tot}({\mathbb T^{\epsilon}})$ whose homology is equal to $\operatorname{H}_{1\le}(\operatorname{Tot}({\mathbb B}))$. Curiously, ${\mathbb T^\epsilon}$ is also a quotient of $\mathbb V$. We became convinced that there exists a map of complexes
$$\xymatrix{\operatorname{Tot}({\mathbb T^\epsilon})\ar[d]^{\xi^{\epsilon}}\\\operatorname{Tot}({\mathbb B})}$$ which induces an isomorphism on all of the higher homology. Eventually, we were able to record a formula for $\xi^{\epsilon}$. The mapping cone, ${\mathbb L^\epsilon}$, of $\xi^{\epsilon}$ is an infinite resolution of $\overline{R}$. The resolution ${\mathbb M^\epsilon}$ is a finite subcomplex of ${\mathbb L^\epsilon}$ of the proper length which has the same homology as ${\mathbb L^\epsilon}$.

After we had found the resolution $\mathbb M^{\lceil\frac{\delta}2\rceil}$ of $\overline{R}$, and realized that $I_{\goth g} (\Psi)$ is mixed when $\delta$ is odd, we looked for the generators of $I_{\goth g}(\Psi)^{\text{gunm}}/I_{\goth g}(\Psi)$. In this search, we were inspired by the work of Mark Johnson \cite{J} and Susan Morey \cite{M} in the case $\goth f=\goth g+1$. 
We illustrate our answer in the context of the  
motivating situation from \cite{KPU-BA}. Let $R_0$ be a field, $R_1$ be the polynomial ring $R_0[T_1,\dots,T_\goth f]$, and $R$ be the bi-graded polynomial ring $R=R_0[X_1,\dots, X_\goth g,T_1,\dots,T_\goth f]$ where each $X_i$ has bi-degree $(1,0)$ and each $T_i$ has bi-degree $(0,1)$.  Let $\phi$ be an $\goth f\times \goth f$ alternating matrix 
with bi-homogeneous entries of degree $(1,0)$ and $\Psi$ be an $\goth f\times \goth g$ matrix with bi-homogeneous entries of degree $(0,1)$. Assume that $\goth f+\goth g$ is odd and 
$$\phi\bmatrix T_1\\\vdots\\T_\goth f\endbmatrix=\Psi\bmatrix X_1\\\vdots \\X_\goth g\endbmatrix.$$  Consider the $(\goth f+\goth g)\times(\goth f+\goth g)$ alternating matrix $$B=\bmatrix \phi&\Psi\\-\Psi^{\rm t}&0\endbmatrix.$$ Let $B_i$ be $(-1)^{i+1}$ times the Pfaffian of $B$ with row and column $i$ removed.  
View $B_{\goth f+\goth g}$ as a polynomial in $R_1[X_1,\dots,X_\goth g]$ and  
let $C=c_{R_1}(B_{\goth f+\goth g})$ be
 the content of $B_{\goth f+\goth g}$. It follows that $C$ is a homogeneous ideal in $R_1=R_0[T_1,\dots,T_\goth f]$ generated by forms  of degree $\goth g-1$. At this point, we use elementary techniques to show that 
\begin{equation}\label{at this point}C+I_\goth g(\Psi)\subseteq  I_\goth g(\Psi):(T_1,\dots,T_\goth f).\end{equation}
In this paper we prove that if $\delta\le \operatorname{grade} I_{\goth g}(\Psi)$, then the ideals
$$ C+I_\goth g(\Psi),\quad I_\goth g(\Psi):(T_1,\dots,T_\goth f),\quad I_\goth g(\Psi):(T_1,\dots,T_\goth f)^\infty, \quad\text{and}\quad I_{\goth g}(\Psi)^{\text{gunm}}$$  of $R_1$  all are equal. These facts are used in \cite{KPU-BA} 
where we describe explicitly, in terms of generators and relations, the special fiber ring and the Rees algebra of  
linearly presented grade three Gorenstein ideals in polynomial rings of even Krull dimension.

To prove (\ref{at this point}), first observe first that the column vector 
$$\bmatrix T_1&\cdots&T_\goth f&-X_1&\cdots&-X_\goth g\endbmatrix^{\rm t}$$ 
is in the null space of $B$; then use lower order Pfaffians of $B$ to see that for each index $i$, with $1\le i\le \goth f$,
the row vector
$$[0,\dots,0,B_{\goth f+\goth g},0,\dots,0,-B_i],$$
with $B_{\goth f+\goth g}$ in position $i$, is in the row space of $B$.
It follows that $B_{\goth f+\goth g}T_i+B_iX_\goth g=0$ for $1\le i\le \goth f$. 
The definition of   $B_i$ shows that $B_i$ is in the ideal $I_\goth g(\Psi)$; hence $T_i B_{\goth f+\goth g}\in I_\goth g(\Psi)$ and 
$$T_ic_{R_1}(B_{\goth f+\goth g})=c_{R_1}(T_i B_{\goth f+\goth g})\subseteq I_{\goth g}(\Psi).  $$ This completes the proof of (\ref{at this point}).

The generating set for $I_{\goth g}(\Psi)^{\text{gunm}}/I_{\goth g}(\Psi)$ that we use throughout most of  the paper is almost coordinate-free; consequently, at first glance, it looks much different than ``the content of $B_{\goth f+\goth g}$''; however the generating set  that we use has the advantage that only small modifications of the resolution $\mathbb M^{\lceil\frac{\delta}2\rceil}$ of $\overline{R}$ produces the resolution $\mathbb M^{\lceil\frac{\delta-1}2\rceil}$ of $R/I_{\goth g}(\Psi)^{\text{gunm}}$. We return to a coordinate-dependent matrix version of the results in Section~\ref{BA}.

There are precedents for the resolution of $J^{\text{gunm}}$ to be obtained from the resolution of $J$ using only small modifications: the modules are changed slightly, the maps are changed slightly, but the form of the resolution is not really altered. Recall, for example, the type two almost complete intersection ideals and the deviation two Gorenstein ideals of Huneke-Ulrich \cite{HU85}. The Gorenstein ideals are the unmixed part of ideals which have the same form as the almost complete intersection ideals. Furthermore, the resolutions of the Gorenstein ideals and the almost complete intersections have the same form; see \cite{K86,K92,K95}. Indeed, these ideals are the case $\goth g=1$ of the ideals $I_{\goth g}(\Psi)$ and $I_{\goth g}(\Psi)^{\text{gunm}}$ which are resolved in the present paper.

The main results of the paper, Theorems \ref{main} and \ref{main.c}, are stated in Section~\ref{results}. Section~\ref{not-con} consists of conventions, notation, and preliminary results. The maps and modules of ${\mathbb M^\epsilon}$, all of the numerical information about ${\mathbb M^\epsilon}$, and examples of ${\mathbb M^\epsilon}$ are given in Section~\ref{Med}. The double complexes ${\mathbb B}$, ${\mathbb T^\epsilon}$, and $\mathbb V$ are introduced in Section~\ref{Top-and-Bot}, where we also compute the homology of $\operatorname{Tot}({\mathbb B})$ and $\operatorname{Tot}({\mathbb T^\epsilon})$. The final piece of the puzzle, $\xi^{\epsilon}$, is defined in Section~\ref{xi}.  The proofs of  Theorem~\ref{main} and part (\ref{main.c.iii}) of Theorem~\ref{main.c} appear near the end of Section~\ref{xi}. The Hilbert series part of 
Theorem~\ref{main.c}, assertion (\ref{'23.19.c.ii}), is established in Section~\ref{h-vect}. In Section~\ref{BA} we return to the situation of blowup algebras and organize our conclusions in the exact language of \cite{KPU-BA}. We also give the complete proof that equality holds in (\ref{at this point}).

\bigskip

\section{Main  results.}\label{results}

The word ``matrix'' does not appear in the official data of the paper (Data~\ref{87data6}). We explain the transition. 
 Each column of $\Psi$, from Section~\ref{Introduction}, is annihilated by the row vector 
\begin{equation}\label{87the-Tees}\bmatrix T_1&\dots&T_{\goth f}\endbmatrix.\notag\end{equation} It follows that each column  of $\Psi$ is equal to
$$ \text{ an alternating matrix times}\bmatrix T_1\\\vdots \\ T_{\goth f}\endbmatrix.$$
In order to understand $\Psi$ one must consider the $\goth g$ relevant alternating matrices. 
We deal with Pfaffian identities involving multiple alternating matrices by making use of the divided power structure on the subalgebra $\bigoplus_n \bigwedge^{2n}F$ of an exterior algebra $\bigwedge^{\bullet}F$. 
 In other words, 
we work with an $R$-module homomorphism ${\mu: G^* \to \bigwedge^2 F}$, where  
$G$ and $F$ are  free $R$-module of rank $\goth g$ and $\goth f$, respectively.    (Notice that if one picks bases for $G^*$ and $F$, then $\mu$ selects $\goth g$ alternating matrices, one for each column of $\Psi$.) 

If $\Psi$ only has one column, then  the situation  has been  studied extensively; see, for example \cite{HU85,K86,S,K92,K95}.  Curiously, many of the ideas involved in the present project are already present in the case where $\Psi$ only has  one column.

Every construction in sections \ref{results} through \ref{h-vect} 
is built using Data~\ref{87data6}
or Data~\ref{unm-data}. 
(We return to a coordinate-dependent matrix version of the results in Section~\ref{BA}.)
The interesting results are established after 
we impose the  hypotheses of \ref{87hyp6}.

\begin{data}\label{87data6}Let $R$ be a commutative Noetherian ring,  $F$ and $G$ be free $R$-modules of rank $\goth f$ and $\goth g$, respectively, with $1\le \goth g< \goth f$, $\tau$ be an element of $F^*$, and $\mu:G^*\to \bigwedge^2F$ be an $R$-module homomorphism. Define $\Psi:G^*\to F$ by $\Psi(\gamma)=\tau(\mu(\gamma))$ for $\gamma$ in $G^*$. Let $\delta$ represent $\goth f-\goth g$, $\epsilon$ represent an integer with $\lceil\frac{\delta-1}2\rceil\le \epsilon\le \lceil \frac{\delta}2\rceil$, and  $\overline{R}$ be the $R$-algebra $R/I_{\goth g} (\Psi)$.
\end{data}

\begin{hypotheses}\label{87hyp6} Adopt Data~\ref{87data6}. 
Assume that \begin{enumerate}[(a)]\item\label{87hyp6-a} $\delta\le \operatorname{grade} I_{\goth g}(\Psi)$, (this bound is one less than the maximal possible grade for $I_{\goth g}(\Psi)$), and
\item\label{87hyp6-b} $\goth f\le \operatorname{grade} I_1(\tau)$. \end{enumerate}
\end{hypotheses}

\medskip

Data~\ref{unm-data} is more complicated than Data~\ref{87data6}, is meaningful only when  $\delta$ is odd, and pertains only to the generating set for $I_\goth g(\Psi)^{\text{gunm}}$ and the first differential in $\mathbb M^\epsilon$ when $\epsilon=\frac{\delta-1}2$. (Recall that $I_\goth g(\Psi)^{\text{gunm}}$ is the grade unmixed part of $I_\goth g(\Psi)$; see \ref{gunm} for more details.) It is possible to ignore Data~\ref{unm-data} and still read most of the paper.
 The data of \ref{87data6} is coordinate-free; however the data of \ref{unm-data} is  not completely coordinate-free; it requires that one pair of dual basis elements  $(Y_1,X_1)$, in $G\times G^*$ be distinguished. 
The ideal $\goth C$ of $R$, as described in Data~\ref{unm-data}, does not depend on the choice of  $(Y_1,X_1)$. (See the discussion surrounding (\ref{at this point}) for a completely different description of $I_{\goth g}(\Psi)^{\text{gunm}}/I_{\goth g}(\Psi)$ in a special case; but do notice that in (\ref{at this point}) one column of $\Psi$ is treated in a distinguished manner and any other column of $\Psi$ would work just as well.)

\begin{data}\label{unm-data} Adopt Data~{\rm\ref{87data6}}. 

\smallskip\noindent(a) Assume  $\delta$ odd. Decompose $G$ and $G^*$ as 
\begin{equation}\label{decomp}G=RY_1\oplus G_+\quad\text{and}\quad G^*=RX_1\oplus {G_+}^*\end{equation} with $Y_1(X_1)=1$, $Y_1({G_+}^*)=0$, and $G_+(X_1)=0$, where $Y_1\in G$, $X_1\in G^*$,  $G_+$ and ${G_+}^*$  are free submodules of $G$ and $G^*$, respectively, of rank $\goth g-1$.  Define an $R$-module (integration) homomorphism $\int_{X_1}:D_iG^*\to D_{i+1}G^*$ by way of the decomposition (\ref{decomp}); that is, 
$$\textstyle\int_{X_1}\Big(\sum\limits_{j=0}^iX_1^{(j)}\otimes \gamma_{i-j,+}\Big)=\sum\limits_{j=0}^iX_1^{(j+1)}\otimes \gamma_{i-j,+},$$ with $\gamma_{i-j,+}\in D_{i-j}({G_+}^*)$.  
Notice that \begin{equation}\label{NT} \textstyle Y_1(\int_{X_1} \gamma_i)=\gamma_i \quad\text{for all $\gamma_i\in D_i(G^*)$.}\end{equation} 
 Define \begin{equation}\label{c-2.3}\textstyle  c: D_{\frac{\delta-1}2}(G^*)\to \bigwedge^{\goth f}F\otimes \bigwedge^{\goth g}G\end{equation} to be the $R$-module homomorphism
\begin{equation}\label{2.3.3.5}\textstyle c(\gamma_{\frac{\delta-1}2}) =[D(\mu)](\int_{X_1} \gamma_{\frac{\delta-1}2})\wedge  (\bigwedge^{\goth g-1}\Psi)(\omega_{{G_+}^*})
\otimes \omega_{G_+}\wedge  Y_1.\end{equation} 

\smallskip \noindent (b) Define $\goth C$ to be the following ideal of $R$
 \begin{equation}\label{unm-data-b}\goth C=\begin{cases}\operatorname{ann}_R (\operatorname {coker} c)&\text{if $\delta$ is odd}\\
0&\text{if $\delta$ is even}\end{cases}\end{equation} and define $\widetilde{R}=R/(I_{\goth g}(\Psi)+\goth C)$.
 \end{data}
\begin{remarks} \begin{enumerate}[\rm(a)]
\item The homomorphism $c$ of (\ref{c-2.3}) and is defined only when $\delta$ is odd. 
\item If $\delta$ is odd, then the target of $c$ is a free $R$-module of rank one; so our definition of $\goth C$ is a coordinate-free way of saying that $\goth C$ is ``the image of $c$''.
\item We show in (\ref{much-promised}) that  $\goth C\subseteq I_\goth g(\Psi):I_1(\tau)$.
\item 
Some hints about
why the generators for $I_\goth g(\Psi)^{\text{gunm}}/I_\goth g(\Psi)$ as calculated in (\ref{at this point}) are the same as the generators of $\goth C$ are given in (\ref{no-Z}) and (\ref{perfect-gens}).  A complete proof may be found in Theorem~\ref{unmpart}.\ref{8.3.a}.
\end{enumerate}\end{remarks}

Theorems~\ref{main} and \ref{main.c} are the main results of the paper. 
The short version of these theorems is that we resolve $R/I_\goth g(\Psi)$ and $R/I_\goth g(\Psi)^{\text{gunm}}$, where $I_\goth g(\Psi)^{\text{gunm}}$ is the grade unmixed part of the ideal $I_\goth g(\Psi)$ (see, for example, \ref{gunm}), and we record the Hilbert series, multiplicity, and $h$-vectors of these rings.

\begin{theorem}\label{main} Adopt Data~{\rm{\ref{87data6}}} and 
{\rm{\ref{unm-data}}}.  
The following statements hold.

\begin{enumerate}[\rm(a)]\item\label{main-cx}
The maps and modules of ${\mathbb M^\epsilon}$, given in Definition~{\rm\ref{doodles'}}, form a complex. 
\setcounter{tempcounter}{\value{enumi}}
\end{enumerate}

\medskip\noindent
Assume that Hypotheses {\rm\ref{87hyp6}} are in effect for the rest of the statements.

\medskip
\begin{enumerate}[\rm(a)]
\setcounter{enumi}{\value{tempcounter}}
\item\label{main.a} If $\epsilon=\lceil \frac \delta 2\rceil$, then the complex  ${\mathbb M^{\epsilon}}$ is  a resolution of $\overline{R}$ by free $R$-modules.

\item\label{main.new-b} If $\epsilon=\lceil \frac {\delta -1}2\rceil$, then the complex  ${\mathbb M^{\epsilon}}$ is  a resolution of $\widetilde{R}$ by free $R$-modules.
\setcounter{tempcounter}{\value{enumi}}
\end{enumerate}

\medskip \noindent
Assume that  $I_1(\tau)$ is a proper ideal of $R$ for the rest of the statements.

\medskip
\begin{enumerate}[\rm(a)]
 \setcounter{enumi}{\value{tempcounter}}
\item\label{main.b} The projective dimension of  the $R$-module $\overline{R}$ is equal to
$$\begin {cases} \goth f-1, &\text{if $\delta$ is even, and}\\
\goth f,&\text{if $\delta$ is odd,}\end{cases}$$
and the  
projective dimension of  the $R$-module $\widetilde{R}$ is equal to
$$\begin {cases} \goth f-1, &\text{if $\delta$ is even,}\\
\goth f-2,&\text{if $\delta$ is odd, $3\le \delta$, and $2\le \goth g$,}\\
\goth f-1,&\text{if $\delta$ is odd,  $\goth g=1$, and $\operatorname{grade} I_\goth g(\Psi)\le \delta$, and}\\
1,&\text{if $\delta=1$ and $2\le \goth g$}.
\end{cases}$$

\item\label{PS} If $\delta$ is even, then $\operatorname{depth} R_{\mathfrak p} =\delta$ for every $\mathfrak p \in \operatorname{Ass}_R(\overline{R})$.

\item \label{main.new-e}Assume that one of the following three hypotheses is in effect:
\begin{enumerate}[\rm(i)]\item\label{main.new-e.i} $\delta$ is even, or \item\label{main.new-e.ii} $\delta$ is odd and $2\le \goth g$, or \item\label{main.new-e.iii} $\delta$ is odd, $\goth g=1$, and $\operatorname{grade} I_{\goth g}(\Psi)\le \delta$.\end{enumerate}
Then 
$\operatorname{depth}R_{\mathfrak p} =\delta$ for every $\mathfrak p \in \operatorname{Ass}_R(\widetilde{R})$.
\item \label{main.new-f}If one of the three hypotheses of \rm{(\ref{main.new-e})} is in effect, then the ideals
$$\goth C+I_{g}(\Psi),\quad I_{g}(\Psi):I_{1}(\tau),\quad I_{g}(\Psi):I_{1}(\tau)^\infty,\quad\text{and}\quad I_{g}(\Psi)^{\rm{gunm}}$$ of $R$ are equal;  and in particular, $\widetilde{R}=R/I_{g}(\Psi)^{\text{\rm gunm}}$.
\item \label{main.new-g} If one of the three hypotheses of \rm{(\ref{main.new-e})} is in effect and 
 $\epsilon=\lceil \frac {\delta -1}2\rceil$, then the complex  ${\mathbb M^{\epsilon}}$ is  a resolution of $R/I_{\goth g}(\Psi)^{\text{\rm gunm}}$ by free $R$-modules.
\end{enumerate}\end{theorem}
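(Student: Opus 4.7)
The plan is to obtain this statement as an immediate consequence of the two earlier assertions of the same theorem, namely (\ref{main.new-b}) and (\ref{main.new-f}), which I assume as already established at this point. The key observation is that $R/I_{\goth g}(\Psi)^{\text{gunm}}$ already has a previously constructed resolution in disguise: by (\ref{main.new-b}) the complex $\mathbb M^{\epsilon}$ with $\epsilon=\lceil\frac{\delta-1}{2}\rceil$ is a free resolution of $\widetilde R=R/(I_{\goth g}(\Psi)+\goth C)$, so if we can identify $\widetilde R$ with $R/I_{\goth g}(\Psi)^{\text{gunm}}$, we are done.

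Concretely, I would argue as follows. First, under any of the three hypotheses in (\ref{main.new-e}), part (\ref{main.new-f}) identifies the ideals $\goth C+I_{\goth g}(\Psi)$ and $I_{\goth g}(\Psi)^{\text{gunm}}$, and hence $\widetilde R = R/I_{\goth g}(\Psi)^{\text{gunm}}$. Second, (\ref{main.new-b}) ensures that $\mathbb M^{\epsilon}$ with $\epsilon=\lceil\frac{\delta-1}{2}\rceil$ is a resolution of $\widetilde R$ by free $R$-modules (note that this part of the theorem is proved under Hypotheses \ref{87hyp6} alone, without needing the stronger conditions of (\ref{main.new-e})). Combining the two gives that $\mathbb M^{\epsilon}$ resolves $R/I_{\goth g}(\Psi)^{\text{gunm}}$.

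It is worth remarking on the two parities separately. When $\delta$ is even, $\lceil\frac{\delta-1}{2}\rceil=\lceil\frac{\delta}{2}\rceil=\delta/2$, so $\mathbb M^{\epsilon}$ is the single complex resolving $\overline R$ via (\ref{main.a}); moreover $\goth C=0$ by construction in Data \ref{unm-data}(b), so $\widetilde R=\overline R$, and the unmixedness statement in (\ref{PS}) forces $I_{\goth g}(\Psi)=I_{\goth g}(\Psi)^{\text{gunm}}$. Hence the assertion is automatic in this parity, and there is nothing new beyond (\ref{main.a}). The genuinely new content is in the odd case, where the two complexes $\mathbb M^{\lceil\delta/2\rceil}$ and $\mathbb M^{\lceil(\delta-1)/2\rceil}$ are distinct and resolve two different cyclic modules.

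No part of this argument is likely to be the main obstacle, since the deep work is packed into (\ref{main.new-b}) (the homological identification via $\xi^{\epsilon}$ and the mapping cone construction described in the introduction) and into (\ref{main.new-f}) (the chain of equalities $\goth C+I_{\goth g}(\Psi)=I_{\goth g}(\Psi):I_1(\tau)=I_{\goth g}(\Psi):I_1(\tau)^{\infty}=I_{\goth g}(\Psi)^{\text{gunm}}$, which rests on the grade-unmixedness established via the depth computation in (\ref{main.new-e})). The only thing to verify in the present proof is that the hypotheses in (\ref{main.new-e}) are indeed strong enough to invoke both (\ref{main.new-b}) and (\ref{main.new-f}) simultaneously, which is immediate from their statements since the hypotheses of (\ref{main.new-b}) (namely Hypotheses \ref{87hyp6}) are a subset of those of (\ref{main.new-e}) together with the standing assumption that $I_1(\tau)$ is a proper ideal.
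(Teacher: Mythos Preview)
Your proposal is correct and matches the paper's own proof essentially verbatim: the paper dispatches (\ref{main.new-g}) in a single sentence, observing that it follows immediately from (\ref{main.new-b}) and (\ref{main.new-f}). Your additional remarks on the even/odd parity and on where the real work lies are accurate elaborations but not needed for the argument itself.
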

\begin{remark}\label{promise}We comment on Hypothesis \ref{main}.\ref{main.new-e.iii}.
We note that if $\goth g=1$ and $\delta$ is odd, then it is possible for $I_\goth g(\Psi)$ to have grade $\delta+1$; for example if
$$\Psi=\bmatrix 0&I_{\frac {\goth f}2}\\-I_{\frac {\goth f}2}&0\endbmatrix \bmatrix T_1\\\vdots\\ T_{\goth f}\endbmatrix.$$In this situation, $\widetilde{R}$ is not of any interest. A complete characterization  of this situation is given in Observation~\ref{6.rmk}. \end{remark}

\bigskip Theorem~\ref{main.c} gives the Hilbert series of three families of rings because Theorem~\ref{main} shows that
$$\operatorname{H}_0(\mathbb M^\epsilon)= \begin{cases} R/I_{\goth g}(\Psi)\text{ in the case that $I_{\goth g}(\Psi)$ is not a grade unmixed ideal,}&\text{if $\epsilon=\frac{\delta+1}2$,}\\
R/I_{\goth g}(\Psi)\text{ in the case that $I_{\goth g}(\Psi)$ is a grade unmixed ideal,}&\text{if $\epsilon=\frac{\delta}2$,}\\
R/I_{\goth g}(\Psi)^{\text{gunm}}\text{ in the case that $I_{\goth g}(\Psi)$ is not a grade unmixed ideal,}&\text{if $\epsilon=\frac{\delta-1}2$.}
\end{cases}$$

\begin{theorem}\label{main.c} Adopt Data~{\rm{\ref{87data6}}} and 
{\rm{\ref{unm-data}}}.   Assume that Hypotheses {\rm\ref{87hyp6}} are in effect, that $R$ is a non-negatively graded ring, and that  $\tau:R(-1)^{\goth f}\to R$ and $\mu:R^{\goth g}\to R^{\binom{\goth f}2}$ are homogeneous {\rm(}degree preserving{\rm)} $R$-module homomorphisms.  Then the following statements also hold. \begin{enumerate}[\rm(a)]
\item\label{'23.19.c.ii} Assume that $2\le \goth g$, or else, that $\goth g=1$ and $\goth f$ is odd. Then the Hilbert series of $\operatorname{H}_0(\mathbb M^\epsilon)$ is equal to $$\operatorname{HS}_{\operatorname{H}_0(\mathbb M^\epsilon)}(s)=\operatorname{HS}_{R}(s)\cdot \operatorname{HN}_{\operatorname{H}_0(\mathbb M^\epsilon)}(s)$$ for $\operatorname{HN}_{\operatorname{H}_0(\mathbb M^\epsilon)}(s)=(1-s)^{\goth f-\goth g}\cdot \operatorname{hn}_{\operatorname{H}_0(\mathbb M^\epsilon)}(s)$ and
$$\begin{array}{ll}\operatorname{hn}_{\operatorname{H}_0(\mathbb M^\epsilon)}(s)&
=\begin{cases}
\phantom{+}(1-s)^{\goth g}\sum\limits_{j\le \epsilon-1}
(-1)^{\delta+1}\binom{\goth g+j-1}{j}s^{2j+2\goth g-\goth f}
\\
+\sum\limits_{\ell=0}^{\goth g-2}\binom{\ell+\goth f-\goth g-1}{\ell}s^\ell\\
+\sum\limits_{\ell=0}^{\goth f-\goth g-2}(-1)^{\ell+\delta}\binom{\goth g+\ell-1}\ell s^{2\goth g-\goth f+\ell}
\end{cases}\\ 

&=\begin{cases}\textstyle \phantom{+}\sum\limits_{\ell=0}^{\goth g-1}\binom{\ell+\goth f-\goth g-1}\ell s^\ell\\-\chi(\epsilon=\frac{\delta-1}2)\binom{\goth g+\epsilon-1}{\epsilon
}s^{\goth g-1}\\+\sum\limits_{\ell=\goth g}^{q(\goth g,\goth f)} 
\sum\limits_{j\le
\epsilon-1}(-1)^{\ell+\goth g+1}\binom{\goth g}{\ell-2\goth g+\goth f-2j}
\binom{\goth g+j-1}{j}s^{\ell},\end{cases}
\end{array}$$ where $$q(\goth g,\goth f)=\begin{cases}
2\goth g-3,&\text{if $\epsilon=\frac{\delta-1}2$,}\\
2\goth g-2,&\text{if $\epsilon=\frac{\delta}2$, and}\\2\goth g-1,&\text{if $\epsilon=\frac{\delta+1}2$}.\end{cases}$$
In particular, if $R$ is a standard graded polynomial ring of Krull dimension $\dim R$ over a field, then
\begin{enumerate}[\rm(i)]
 \item\label{1.3.ii.1} $$\operatorname{HS}_{\operatorname{H}_0(\mathbb M^\epsilon)}(s)=\frac{\operatorname{hn}_{\operatorname{H}_0(\mathbb M^\epsilon)}(s)}{(1-s)^{\dim R-\goth f+\goth g}},$$
\item\label{1.3.ii.1+} the $h$-vector of $\operatorname{H}_0(\mathbb M^\epsilon)$ is
$$\operatorname{hv}(\operatorname{H}_0(\mathbb M^\epsilon))=(h_0,\dots,h_{q(\goth g,\goth f)}),$$
with $$h_\ell=\begin{cases} \binom{\ell+\goth f-\goth g-1}\ell,&
\text{if $0\le \ell \le \goth g-2$,}\vspace{5pt}\\
\binom{\ell+\goth f-\goth g-1}\ell
-\chi(\epsilon=\frac{\delta-1}2)\binom{\goth g+\epsilon-1}{\epsilon
},&\text{if $\ell=\goth g-1$, and}\vspace{5pt}\\
\sum\limits_{j\le\epsilon -1}(-1)^{\ell+\goth g+1}\binom{\goth g}{\ell-2\goth g+\goth f-2j}
\binom{\goth g+j-1}{j},&\text{if $\goth g\le \ell\le q(\goth g,\goth f)$,}
\end{cases}$$
  and 
\item\label{1.3.ii.2} the multiplicity of $\operatorname{H}_0(\mathbb M^\epsilon)$ is $$e(\operatorname{H}_0(\mathbb M^\epsilon))=\operatorname{hn}_{\operatorname{H}_0(\mathbb M^\epsilon)}(1)= 
\sum\limits_{i=0}^{\lfloor \delta/2\rfloor}\binom{\goth f-2-2i}{\delta-2i},
$$
which is equal  to 
$$\begin{cases}
\text{the number of monomials of even degree at most $\delta$ in $\goth g-1$ variables,} &\text{if $\delta$ is even, or}\\
\text{the number of monomials of odd degree at most $\delta$ in $\goth g-1$ variables}, &\text{if $\delta$ is odd}.\\
\end{cases}$$
 \end{enumerate}
 \item\label{main.c.iii} If $R$ is a standard graded polynomial ring over a field, then the minimal resolution of    $\overline{R}$ by free $R$-modules is $\goth g$-linear. \end{enumerate}  
\end{theorem}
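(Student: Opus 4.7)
The theorem has two parts: assertion (\ref{'23.19.c.ii}) gives explicit Hilbert-series, $h$-vector, and multiplicity formulas, while assertion (\ref{main.c.iii}) asserts $\goth g$-linearity of the minimal resolution. Under the graded hypotheses, Definition~\ref{doodles'} of Section~\ref{Med} describes each free module in $\mathbb M^\epsilon$ together with its graded shift; by Theorem~\ref{main}(\ref{main.a}), (\ref{main.new-b}), (\ref{main.new-g}), this complex is a finite free resolution of $\operatorname{H}_0(\mathbb M^\epsilon)$. These graded data are the only input to both parts, and I would use them separately for each assertion.

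For (\ref{'23.19.c.ii}), I start from the standard identity
$$\operatorname{HS}_{\operatorname{H}_0(\mathbb M^\epsilon)}(s)=\operatorname{HS}_R(s)\cdot\sum_i(-1)^i\operatorname{HS}_{\mathbb M^\epsilon_i/R}(s),$$
reading off the right-hand side term by term from the ranks recorded in Section~\ref{Med} (divided-power, exterior, and symmetric-power pieces, together with their graded twists by $\goth g+i-1$). Plugging in and collecting homological contributions by homological degree $i$ and internal shift produces the first displayed form for $\operatorname{hn}_{\operatorname{H}_0(\mathbb M^\epsilon)}(s)$; the factor $(1-s)^{\goth f-\goth g}$ pulled out in $\operatorname{HN}_{\operatorname{H}_0(\mathbb M^\epsilon)}(s)$ is legitimate because Theorem~\ref{main}(\ref{main.b}),(\ref{PS}),(\ref{main.new-e}) forces $\dim\operatorname{H}_0(\mathbb M^\epsilon)=\dim R-(\goth f-\goth g)$, so $\operatorname{HN}_{\operatorname{H}_0(\mathbb M^\epsilon)}(s)$ must vanish to that order at $s=1$.

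To pass from the first form of $\operatorname{hn}_{\operatorname{H}_0(\mathbb M^\epsilon)}(s)$ to the second, I expand $(1-s)^\goth g$ by the binomial theorem and interchange the order of summation, matching the coefficient of $s^\ell$ with the inner convolution $\sum_j\binom{\goth g}{\ell-2\goth g+\goth f-2j}\binom{\goth g+j-1}{j}$; the indicator $\chi(\epsilon=\frac{\delta-1}2)$ in the $\ell=\goth g-1$ row accounts for the single extra boundary term that appears only in that case. For the polynomial-ring specialization, $\operatorname{HS}_R(s)=1/(1-s)^{\dim R}$ immediately gives (\ref{1.3.ii.1}) and (\ref{1.3.ii.1+}). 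For (\ref{1.3.ii.2}), since Theorem~\ref{main} identifies $\operatorname{H}_0(\mathbb M^\epsilon)$ with either $\overline{R}$ or $R/I_\goth g(\Psi)^{\text{gunm}}$ (which have the same multiplicity), I reduce to evaluating $\operatorname{hn}_{\operatorname{H}_0(\mathbb M^\epsilon)}(1)$ for one value of $\epsilon$ and then rewrite the answer as a monomial count of bounded even or odd degree in $\goth g-1$ variables via $\sum_j\binom{\goth g-1+j}{j}s^j=1/(1-s)^\goth g$; the alternative closed-form $\sum_{i=0}^{\lfloor\delta/2\rfloor}\binom{\goth f-2-2i}{\delta-2i}$ then follows by a Vandermonde-type convolution. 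The main obstacle of the proof is this bookkeeping --- verifying the equality of the two displayed $\operatorname{hn}$-formulas and extracting the clean multiplicity identity --- and I expect most of Section~\ref{h-vect} to be devoted to it.

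For (\ref{main.c.iii}), I take $\epsilon=\lceil\delta/2\rceil$ so that $\mathbb M^\epsilon$ resolves $\overline R$ (Theorem~\ref{main}(\ref{main.a})). Reading Definition~\ref{doodles'}, each term $\mathbb M^\epsilon_i$ for $i\ge 1$ is concentrated in the single internal degree $\goth g+i-1$ (the ideal is generated in degree $\goth g$, and every subsequent generator is a twist of a Koszul or Eagon-Northcott summand by exactly one extra degree). Hence every differential in $\mathbb M^\epsilon$ is represented by a matrix of linear forms, so when one minimalizes by cancelling scalar-invertible entries each cancellation removes a pair of summands of equal internal degree and preserves the single-twist property at every remaining homological position. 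The resulting minimal free resolution of $\overline R$ is therefore $\goth g$-linear, completing the proof.
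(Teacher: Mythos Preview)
Your outline for assertion~(\ref{'23.19.c.ii}) follows the same route as the paper (compute $\operatorname{HN}$ from the graded twists in $\mathbb M^\epsilon$, factor out $(1-s)^{\goth f-\goth g}$, then massage the result into the two displayed forms and evaluate at $s=1$).  One point you underestimate: the passage from $\operatorname{HN}$ to the \emph{first} displayed form of $\operatorname{hn}$ is not mere bookkeeping.  Your dimension argument shows only that $(1-s)^{\goth f-\goth g}$ divides $\operatorname{HN}$; it does not give you the quotient.  In the paper this step is the identity~(\ref{.X'Hope1}), whose proof occupies most of Claim~\ref{.XNov6} and relies on the standalone binomial Lemma~\ref{.X'Cj25.1}.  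The transition to the second displayed form (Claim~\ref{.X'Hope2}) and the multiplicity computation are closer to the routine manipulations you describe.

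Your argument for assertion~(\ref{main.c.iii}) rests on a false reading of Definition~\ref{doodles'}.  The module $\mathbb M^\epsilon_N$ is \emph{not} concentrated in a single degree: by Remark~\ref{num-info-Med}.\ref{num-info-Med.d},
\[
\mathbb M^\epsilon_N \;\simeq\; R\bigl(-(\goth g+N-2)\bigr)^{\beta'_N}\;\oplus\; R\bigl(-(\goth g+N-1)\bigr)^{\beta_N},
\]
with the $\mathbb T^\epsilon$-summands sitting one degree below the $\mathbb B$-summands (see also the explicit two-degree modules in Example~\ref{CompleteExample}).  Consequently the differential is not a matrix of linear forms: the component $V_{I,J}^{\mathbb T^\epsilon}\to V_{i,j}^{\mathbb B}$ in Definition~\ref{doodles'}.\ref{med-top-to-bot'} is built from $D(\mu)$, which under the present grading is a degree-$0$ (scalar) map.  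So your premise ``all entries are linear, hence nothing to cancel'' is self-contradictory---it is precisely these scalar entries that must be cancelled.  The correct argument, carried out in Remark~\ref{num-info-Med}.\ref{num-info-Med.e}, is that the $\mathbb T^\epsilon$-summand of $\mathbb M^\epsilon_{N}$ (degree $\goth g+N-2$) cancels against part of the $\mathbb B$-summand of $\mathbb M^\epsilon_{N-1}$ (same degree), because over a field $R_0$ a degree-$0$ map of graded free modules has free image that is a direct summand; what survives at position $N$ is a free module generated purely in degree $\goth g+N-1$, with rank $b_N=\beta_N-\beta'_{N+1}$.  You need this two-degree structure, together with the observation that no map $R(-a)\to R(-a-1)$ exists, to force the minimalized complex to be $\goth g$-linear.
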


\begin{remarks}\label{rmk} \begin{enumerate}[\rm(a)]
 
\item\label{2.8.a} In item (\ref{'23.19.c.ii}) of Theorem~\ref{main.c}, we use the notation  of \cite[5.4.1]{KR} to denote  the Hilbert series $\operatorname{HS}_{\operatorname{H}_0(\mathbb M^\epsilon)}(s)$, the numerator of the Hilbert series $\operatorname{HN}_{\operatorname{H}_0(\mathbb M^\epsilon)}(s)$,  
the simplified Hilbert numerator $\operatorname{hn}_{\operatorname{H}_0(\mathbb M^\epsilon)}(s)$, and the 
$h$-vector $\operatorname{hv}(\operatorname{H}_0(\mathbb M^\epsilon))$, of $\operatorname{H}_0(\mathbb M^\epsilon)$. We gave two formulations for the simplified Hilbert numerator $\operatorname{hn}_{\operatorname{H}_0(\mathbb M^\epsilon)}(s)$: one yields the 
$h$-vector $\operatorname{hv}(\operatorname{H}_0(\mathbb M^\epsilon))$  quickly and the other yields the multiplicity $e(\operatorname{H}_0(\mathbb M^\epsilon))$ quickly.  Recall that the Hilbert series of a Noetherian graded ring $S=\bigoplus_{0\le i}S_i$, (with $S_0$ an Artinian local ring) is the formal power series  
$$\textstyle\operatorname{HS}_S(z)=\sum_i \lambda_{S_0}(S_i) z^i,$$
where $\lambda_{S_0}(\underline{\phantom{x}})$ represents the length of an $S_0$-module, and the multiplicity of $S$
is $$e(S)=(\dim S)!
\lim_{i\to\infty}\frac{\lambda_S(S/\mathfrak m^i S)}{i^{\dim S}},$$
where $\mathfrak m$ is the maximal homogeneous ideal of $S$ and  ``$\dim$'' represents  Krull dimension.

\item\label{2.8.b} If \begin{equation}\label{RMK}\goth g=1 \quad\text{and}\quad \goth f\text{ is even,}\end{equation} then the correct statement which is  analogous to (\ref{'23.19.c.ii}) of Theorem~\ref{main.c} is $$\operatorname{HN}_{\overline{R}}(s)=(1-s)^{\goth f} \quad\text{and}\quad \operatorname{hn}_{\overline{R}}(s)=1.$$ Indeed, the form for $\frac{\operatorname{HN}_{\overline{R}}(S)}{(1-s)^{\goth f-\goth g}}$, as given in (\ref{'23.19.c.ii}), continues to be correct, even in the presence of the hypotheses (\ref{RMK}); however, in the presence of (\ref{RMK}),
 $$\frac{\operatorname{HN}_{\overline{R}}(S)}{(1-s)^{\goth f-\goth g}}
=1-s;$$consequently, it is not appropriate to call the quotient $\operatorname{hn}_{\overline{R}}(s)$. See Remark~\ref{6.3} for more details.

On the other hand, when the hypothesis of (\ref{RMK}) are in effect, then there is no meaningful statement about $\operatorname{HN}_{\operatorname{H}_0(\mathbb M^{\epsilon})}(s)$, with  $\epsilon=\frac{\delta-1}2$, which is analogous to (\ref{'23.19.c.ii}) of Theorem~\ref{main.c}. In this situation, $I_{\goth g}(\Psi)^{\text{gunm}}=(I_{\goth g}(\Psi),\alpha)$ for some homogeneous element $\alpha$ in $R$ of degree $0$; see, for example, Example~\ref{exs-Med}.\ref{4.4.f}. Thus, the Hilbert series of $\operatorname{H}_0(\mathbb M^{\epsilon})$, computed using this grading, is the same as the Hilbert series of $0$.

\item\label{2.8.c} Assertion \ref{main.c}.\ref{main.c.iii} is not true, in general, for $R/I_{\goth g}(\Psi)^{\text{gunm}}$, when $I_{\goth g}(\Psi)^{\text{gunm}}\neq I_{\goth g}(\Psi)$ because, for example, in general,  $I_{\goth g}(\Psi)^{\text{gunm}}$ has generators of different degrees. See  Example~\ref{exs-Med}.\ref{4.4.c} or Example~\ref{exs-Med}.\ref{4.4.c-may-7}.
\end{enumerate}\end{remarks}

\bigskip

\section{Conventions,  notation, and preliminary results.}\label{not-con} Data~\ref{87data6} and \ref{unm-data} are in effect throughout this section.

\begin{chunk}Unless otherwise noted, all functors will be functors of $R$-modules; that is, $\otimes$, $\operatorname{Hom}$, $(\underline{\phantom{X}})^*$, $\operatorname{Sym}_i$, $D_i$, $\bigwedge^i$, and $:$ mean
$\otimes_R$, $\operatorname{Hom}_R$, $\operatorname{Hom}_R(\underline{\phantom{X}},R)$, $\operatorname{Sym}_i^R$, $D_i^R$,  $\bigwedge^i_R$, and $:_R$ respectively.\end{chunk} 
  
\begin{chunk}If $I$ and $J$ are ideals in a ring $R$, then the {\it saturation} of $J$ by $I$ in $R$ is 
$$J:I^\infty=\bigcup\limits_{n=1}^\infty J:I^n=\{r\in R\mid rI^n\subseteq J\text{ for some $n$}\}.$$ 
\end{chunk}

\begin{chunk} We denote the total complex of the double complex $X$ by $\operatorname{Tot}(X)$. \end{chunk} 

\begin{chunk} If $z$ is a cycle in a complex, then we denote the corresponding element of homology by $\llbracket z\rrbracket$.\end{chunk} 

 \begin{chunk} \label{87Not1} 
If $\delta$ is odd, then the decomposition of (\ref{decomp}) is used in the description of $c$ from (\ref{c-2.3}) and the description of $\goth C$ from (\ref{unm-data-b}); otherwise, 
our complexes are described in a coordinate-free manner. 
We make much use of the divided power structures on the algebras $D_{\bullet}(G^*)$ and $\bigwedge^{\bullet}F$; in particular, the $R$-module homomorphism $\mu:G^*\to \bigwedge^2 F$ automatically induces 
a homomorphism  $$D(\mu):D_{\bullet}(G^*)\to \textstyle\bigwedge^{2\bullet} F$$ of divided power algebras and the composition
$$D_{\bullet}(G^*)\xrightarrow{D(\mu)}\textstyle\bigwedge^{2\bullet} F\xrightarrow{\text{inclusion}}\textstyle\bigwedge^{\bullet} F,$$ which we also denote by $D(\mu)$, is used extensively in our calculations. 

 Let $Y_1,\dots,Y_{\goth g}$ be a basis for $G$ and $X_1,\dots,X_{\goth g}$ be the corresponding dual basis for $G^*$. 
\begin{equation}\label{rep}\text{Let $\binom{Y}{i}$ represent the set of monomials of degree $i$ in $Y_1,\dots,Y_{\goth g}$.}\end{equation} If ${m=Y_1^{a_1}\cdots Y_{\goth g}^{a_{\goth g}}}$ is in $\binom{Y}{i}$, then let $m^*$ represent the element $X_1^{(a_1)}\cdots X_{\goth g}^{(a_{\goth g})}$ of $D_i(G^*)$. Observe that $\{m^*\mid m\in \binom{Y}{i}\}$ is the basis for $D_i(G^*)$ which is dual to the basis  $\binom{Y}{i}$ of $\operatorname{Sym}_iG$.
Consider the evaluation map $\operatorname{ev}: \operatorname{Sym}_iG\otimes D_i(G^*)\to R$ and let 
$$\operatorname{ev}^*:R\to D_i(G^*)\otimes \operatorname{Sym}_iG$$ be the dual of $\operatorname{ev}$. Both of these $R$-module homomorphisms are completely independent of coordinates; and therefore the element
$$\operatorname{ev}^*(1)=\sum\limits_{m\in \binom{Y}{i}} m^*\otimes m\in D_i(G^*)\otimes \operatorname{Sym}_iG$$
is completely independent of coordinates; this element will also  be used extensively in our calculations.\end{chunk}

\begin{chunk}\label{omega} In a similar manner, if
  $\omega_{G^*}$ is a basis  for $\bigwedge^{\goth g}(G^*)$ and $\omega_G$ is the corresponding dual basis for $\bigwedge^{\goth g}G$, then the element
$\omega_{G^*}\otimes\omega_G$ is a canonical element of
$\bigwedge^{\goth g}(G^*)\otimes\bigwedge^{\goth g}G$. This element is also used in our calculations.\end{chunk}

\begin{chunk} We recall some of the properties of the divided power structure on the subalgebra $\bigwedge^{2\bullet}F$ of the exterior algebra $\bigwedge^{\bullet}F$. Suppose that $e_1,\dots,e_{\goth f}$ is a basis for the free $R$-module $F$ and $$f_2=\sum_{1\le i_1<i_2\le \goth f}a_{i_1,i_2} e_{i_1}\wedge e_{i_2}$$ is an element of $\bigwedge^2F$, for some $a_{i_1,i_2}$ in $R$. 
Let $A$ be the $\goth f\times \goth f$ alternating matrix with
$$A_{i,j}=\begin{cases} a_{i,j},&\text{if $i<j$,}\\0,&\text{if $i=j$, and}\\-a_{i,j},&\text{if $j<i$}.\end{cases}$$
For each positive integer $\ell$, the $\ell$-th divided power of $f_2$ is 
$$f_2^{(\ell)}=\sum\limits_I A_I e_I\in\textstyle\bigwedge^{2\ell}F,$$ where the $2\ell$-tuple $I=(i_1,\dots,i_{2\ell})$ roams over all increasing sequences  of integers with $1\le i_1$ and $i_{2\ell}\le \goth f$, $e_I=e_{i_1}\wedge \dots \wedge e_{i_{2\ell}}$, and $A_I$ is the Pfaffian of the submatrix of $A$ which consists of rows and columns $\{i_1,\dots,i_{2\ell}\}$, in the given order.  Furthermore, $\bigwedge^{2\bullet}F$ is a DG$\Gamma$-module over $\bigwedge^{\bullet}F^*$. In particular, if $\tau\in F^*$ and $v_1,\dots,v_s$ are homogeneous elements of  $\bigwedge^{2\bullet}F$, then 
$$\tau\left(v_1^{(\ell_1)}\wedge \dots\wedge v_{s}^{(\ell_s)}\right)=
\sum\limits_{j=1}^s\tau(v_j)\wedge v_1^{(\ell_1)}\wedge \dots\wedge
v_j^{(\ell_j-1)}
\wedge \dots\wedge v_{s}^{(\ell_s)}.$$ For more details see, for example, 
\cite[Appendix~A2.4]{Ei95} or \cite[Appendix and Sect. 2]{BE77}.
\end{chunk}

\begin{chunk}\label{V-bullet} If $I$ and $J$ are integers, then $V_{I,J}$, $V_{I,J}^{{\mathbb T}}$ and  $V_{I,J}^{{\mathbb B}}$
all represent the free $R$-module $$\textstyle \bigwedge^IF\otimes D_J(G^*).$$ 
Of course the rank of $V_{I,J}^\bullet$ is $\binom{\goth f}I\binom{\goth g+J-1}J$ for any choice of $\bullet$; that is $\bullet$ might be $\mathbb T$,  $\mathbb B$, or empty.
\end{chunk} 

\begin{chunk}\label{2.7} We always use $f_i$ for an arbitrary element of $\bigwedge^iF$ and $\gamma_j$ for an arbitrary element of $D_j(G^*)$.\end{chunk}  
\begin{chunk}\label{2.8}The notation
$$\bigoplus\limits_{\gf{\epsilon\le J}{I+J\le \delta-1}}
\qquad\text{ means }\qquad \bigoplus\limits_{
{\textstyle \{(I,J)\mid \epsilon\le J\text{ and }I+J\le \delta-1\}}.}$$ \end{chunk} 

\begin{chunk}\label{chi}If $S$ is a statement then 
$$\chi(S)=\begin{cases} 1,&\text{if $S$ is true,}\vspace{5pt}\\0,&\text{if $S$ is false.}\end{cases}$$\end{chunk}

\begin{chunk} If $M$ is a matrix (or a homomorphism of free $R$-modules), then $I_r(M)$ is the ideal generated by the
$r\times r$ minors of $M$ (or any matrix representation of $M$). We denote the transpose of a matrix $M$ by $M^{\rm t}$. \end{chunk}
\begin{chunk}\label{gunm-jr}The {\it grade} of a proper ideal  $I$ in a Noetherian ring $R$  is the length of a maximal  $R$-regular sequence in $I$. The unit ideal $R$  of $R$ is regarded as an ideal  of infinite grade.
\end{chunk}

\begin{chunk}\label{gunm} Let $I$ be a proper ideal in a Noetherian ring $R$.
 The ideal $I$ is {\it grade unmixed} if $\operatorname{grade} {\mathfrak p}=\operatorname{grade} I$ for all associated prime ideals ${\mathfrak p}$ of $R/I$. The {\it grade unmixed part} of $I$ is the ideal $I^{\text{gunm}}$ which satisfies either of the following two equivalent conditions:
\begin{enumerate}[\rm(a)]
\item \label{17.15.a} $I^{\text{gunm}}$ is the smallest ideal $K$ with $I\subseteq K$, $\operatorname{grade} K=\operatorname{grade} I$, and $K$ is grade unmixed, or
\item\label{17.15.b} $I^{\text{gunm}}$ is the largest ideal $K$ with $I\subseteq K$ and $\operatorname{grade} I<\operatorname{grade} (I:K)$.
\end{enumerate}

\noindent
Furthermore, if $K$ is any grade unmixed ideal of $R$ with $I\subseteq K$ and $$\operatorname{grade} K=\operatorname{grade} I<\operatorname{grade} (I:K),$$ then $K=I^{\text{gunm}}$. In particular, if $I=\cap_i Q_i$ is a primary decomposition of $I$, with each $Q_i$  a ${\mathfrak p}_i$-primary ideal of $R$, 
 then $I^{\text{gunm}}$ is the intersection of the primary components $Q_i$ of $I$ which correspond to prime ideals ${\mathfrak p}_i$ with $\operatorname{grade} {\mathfrak p}_i=\operatorname{grade} I$. 

 Of course, if $R$ is Cohen-Macaulay, then $I^{\text{gunm}}$ is the usual unmixed part of $I$.

We recall that if $I\subseteq {\mathfrak p}$ are ideals with ${\mathfrak p}$ prime, then $$\operatorname{grade} I\le \operatorname{grade} {\mathfrak p}\le \operatorname{depth} R_{\mathfrak p};$$consequently, if $\operatorname{depth} R_{\mathfrak p}=\operatorname{grade} I$ for all ${\mathfrak p}\in \operatorname{Ass} \frac RI$, then $I$ is automatically grade unmixed.
\end{chunk}

\begin{gunm-def} If $K_1$ and $K_2$ are ideals which both satisfy $$\operatorname{grade}  I<\operatorname{grade}  (I:K_i),$$ then $\operatorname{grade}  I<\operatorname{grade}  (I:(K_1+K_2))$; hence the hypothesis that $R$ is Noetherian guarantees that an ideal $K$ which satisfies (\ref{17.15.b}) exists. Fix this $K$. 
We show that  $K$  also has the property  of (\ref{17.15.a}).

Let $\underline x$ be a  maximal $R$-regular sequence  in $I$. All of the calculations  may be made in $R/(\underline{x})$. So no harm is done if we prove the statement when $\operatorname{grade}  I=0$.

We first show $K$ is grade unmixed of grade $0$. Use the primary decomposition of $K$ to write ${K=A_1\cap A_2}$, where every associated prime ideal of $\frac R{A_1}$ has grade $0$, and, either $A_2=R$, or $A_2$ is a proper ideal and every associated prime ideal of $\frac R{A_2}$ has positive grade. Observe that $$(I:K)A_1A_2\subseteq (I:K)K\subseteq I;$$ hence,
$(I:K)A_2\subseteq (I:A_1)$. Observe that $(I:K)A_2$ has positive grade. It follows that $K\subseteq A_1$ and $0<\operatorname{grade}  (I:A_1)$. The defining property of $K$ now guarantees that $K=A_1$ and therefore, $K$ is grade unmixed. 

Now we show that $K$ has property (\ref{17.15.a}). We have already shown that  $K$ has grade $0$ and  is grade unmixed. We prove that $K$ is the smallest such ideal. Let $J$ be any grade unmixed ideal of $R$ with $I\subseteq J$ and $\operatorname{grade}  J=0$. We  prove that $K\subseteq J$. It suffices to show $K_\mathfrak p\subseteq J_\mathfrak p$ for all $\mathfrak p$ in $\operatorname{Ass}  (\frac RJ)$. Let $\mathfrak p$ be in $\operatorname{Ass}  (\frac RJ)$. The hypotheses on $J$ guarantees that $\operatorname{grade}  \mathfrak p=0$ and therefore, $(I:K)\not\subseteq \mathfrak p$.
On the other hand, $(I:K)K\subseteq I\subseteq J$; hence, 
$$K_\mathfrak p=(I:K)_\mathfrak pK_\mathfrak p\subset I_\mathfrak p\subseteq J_\mathfrak p.$$

With respect to the ``furthermore'' assertion, $I^{\text{gunm}}\subseteq K$ by
(\ref{17.15.a}) because $K$ is grade unmixed and $K\subseteq I^{\text{gunm}}$
by
(\ref{17.15.b}) because $\operatorname{grade}  I<\operatorname{grade} (I:K)$. The assertion about primary decomposition is now obvious.
\qed\end{gunm-def}

\begin{chunk}\label{pd} Let $\operatorname{pd}_R(M)$ represent the projective dimension of an $R$-module $M$. \end{chunk}

\begin{chunk}\label{perfect} Let   $I$ be a proper ideal  in a Noetherian ring $R$. Since one can compute $\operatorname{Ext}^\bullet_R(R/I,R)$ from a projective resolution of $R/I$, one obviously has 
\begin{equation}\label{above}\textstyle \operatorname{grade} I\le \operatorname{pd}_R R/I;\end{equation}
if equality holds, then $I$ is called a {\it perfect ideal}.  
Recall, for example, that if $I$ is a proper homogeneous ideal in  a polynomial ring  $R$  over a field, then $I$ is a perfect ideal if and only if $R/I$ is a Cohen-Macaulay ring. (This is not the full story. For more information, see, for example, \cite[Prop.~16.19]{BV} or \cite[Thm.~2.1.5]{BH}.) A perfect ideal $I$ of grade $g$ is a {\it Gorenstein ideal}
if
$\operatorname{Ext}^g_R(R/I,R)$ is a cyclic R-module.\end{chunk}

\begin{lemma}\label{Gamma}Adopt the notation of {\rm \ref{87Not1}}. If $A$ and $B$ are non-negative integers and $\Gamma$ is an element of $D_{B}(G^*)$, then 
$$\sum\limits_{m\in \binom{Y}{A}}m^*\cdot m(\Gamma)=\binom BA \Gamma.$$
\end{lemma}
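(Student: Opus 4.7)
\smallskip
\noindent\textbf{Proof proposal.} The plan is to verify the identity on the basis $\{X^{(\beta)}:=X_1^{(\beta_1)}\cdots X_{\goth g}^{(\beta_{\goth g})}\mid |\beta|=B\}$ of $D_B(G^{*})$ and then extend by $R$-linearity. After fixing $\beta$ with $|\beta|=B$, I would first unwind the action of $\operatorname{Sym}_A G$ on $D_B(G^{*})$ on the monomial $m=Y^{\alpha}:=Y_1^{\alpha_1}\cdots Y_{\goth g}^{\alpha_{\goth g}}$ with $|\alpha|=A$. Using that each $Y_i$ acts as the contraction $Y_i\cdot X_j^{(c)}=\delta_{ij}X_j^{(c-1)}$ and that the $\operatorname{Sym}_A G$-action is obtained by iterating the $G$-action, one obtains
$$m(X^{(\beta)})=Y^{\alpha}\cdot X^{(\beta)}=\begin{cases} X^{(\beta-\alpha)},&\text{if } \alpha_i\le\beta_i\text{ for all }i,\\ 0,&\text{otherwise.}\end{cases}$$

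Next I would use the divided power multiplication in $D_{\bullet}(G^{*})$, namely $X_i^{(a)}\cdot X_i^{(b)}=\binom{a+b}{a}X_i^{(a+b)}$ applied coordinatewise, to compute $m^{*}\cdot m(X^{(\beta)})$. With $m^{*}=X^{(\alpha)}$ we get, whenever $\alpha\le\beta$ componentwise,
$$m^{*}\cdot m(X^{(\beta)})=X^{(\alpha)}\cdot X^{(\beta-\alpha)}=\left(\prod_{i=1}^{\goth g}\binom{\beta_i}{\alpha_i}\right)X^{(\beta)}.$$
Summing this over all $m\in\binom{Y}{A}$, i.e.\ over all multi-indices $\alpha$ with $|\alpha|=A$ and $\alpha\le\beta$ (the other terms contribute $0$), yields
$$\sum_{m\in\binom{Y}{A}}m^{*}\cdot m(X^{(\beta)})=\left(\sum_{\substack{|\alpha|=A\\ \alpha\le\beta}}\prod_{i=1}^{\goth g}\binom{\beta_i}{\alpha_i}\right)X^{(\beta)}.$$

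Finally I would identify the bracketed sum with $\binom{B}{A}$ by a Vandermonde/binomial theorem argument: the sum is precisely the coefficient of $t^{A}$ in $\prod_{i=1}^{\goth g}(1+t)^{\beta_i}=(1+t)^{B}$, which equals $\binom{B}{A}$. This produces $\binom{B}{A}X^{(\beta)}=\binom{B}{A}\Gamma$ as required. The computation is entirely formal once the conventions are in place; the only point that requires care is keeping the two distinct multiplications straight (the divided-power product used in forming $m^{*}\cdot m(\Gamma)$ versus the $\operatorname{Sym}_{\bullet}G$-module action used in forming $m(\Gamma)$), and not mixing up $Y^{\alpha}\in\operatorname{Sym}_A G$ with any divided-power analogue.
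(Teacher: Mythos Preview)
Your proof is correct and follows essentially the same approach as the paper: reduce to a divided-power monomial $\Gamma=X^{(\beta)}$, compute $m^{*}\cdot m(\Gamma)=\prod_i\binom{\beta_i}{\alpha_i}\,\Gamma$, and then identify $\sum_{|\alpha|=A}\prod_i\binom{\beta_i}{\alpha_i}=\binom{B}{A}$ via a generating-function argument. The only cosmetic difference is that the paper reads this identity off from the coefficient of $x^{A}y^{B-A}$ in $(x+y)^{B}=\prod_i(x+y)^{\beta_i}$, whereas you use the coefficient of $t^{A}$ in $(1+t)^{B}$; and the paper observes that $X_i^{(a_i)}\cdot X_i^{(b_i-a_i)}=\binom{b_i}{a_i}X_i^{(b_i)}$ holds for all non-negative $a_i,b_i$, which absorbs your case distinction $\alpha\le\beta$ into the vanishing of $\binom{\beta_i}{\alpha_i}$ when $\alpha_i>\beta_i$.
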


\begin{proof}It suffices to prove the result for $\Gamma=X_1^{(b_1)}\cdots X_{\goth g}^{(b_{\goth g})}$ where each $b_i$ is a  nonnegative integer and $\sum_i b_i=B$. Let $m=Y_1^{a_1}\cdots Y_{\goth g}^{a_{\goth g}}$, where each $a_i$ is a   non-negative integer and $\sum_i a_i=A$.  Observe that $$m^*\cdot m(\Gamma)=\binom{b_1}{a_1}\cdots\binom {b_{\goth g}}{a_{\goth g}} \Gamma,$$ because $X_i^{(a_i)}\cdot X_i^{(b_i-a_i)}=\binom{b_i}{a_i}X_i^{(b_i)}$. (The most recent equation holds for all non-negative integers $a_i$ and $b_i$.) At this point, we have shown that
$$\sum\limits_{m\in \binom{Y}{A}}m^*\cdot m(\Gamma)=\sum\limits_{a_1+\dots+a_n=A}\binom{b_1}{a_1}\cdots\binom {b_{\goth g}}{a_{\goth g}}\Gamma.$$
On the other hand, \begin{equation}\label{cute}\sum\limits_{a_1+\dots+a_{\goth g}=A}\binom{b_1}{a_1}\cdots\binom {b_{\goth g}}{a_{\goth g}}=\binom BA.\end{equation} Indeed the left of (\ref{cute}) is the coefficient of $x^Ay^{B-A}$ in the left side  
of the polynomial \begin{equation}\label{duh}(x+y)^{b_1}\cdots (x+y)^{b_{\goth g}}=(x+y)^{B},\end{equation} and the right side of (\ref{cute}) is the coefficient of $x^Ay^{B-A}$ in the right side of (\ref{duh}).
\end{proof}

\section{The maps and modules of ${\mathbb M^\epsilon}$.}\label{Med}

The object ${\mathbb M^\epsilon}$ is the focal point of this paper. We introduce the maps and modules of ${\mathbb M^\epsilon}$ in Definition \ref{doodles'}; all of the numerical information about ${\mathbb M^\epsilon}$ is contained in Remarks~\ref{num-info-Med}; and some examples of ${\mathbb M^\epsilon}$ are given in Examples \ref{CompleteExample} and \ref{exs-Med}. We prove in Remark~\ref{rmk5} that ${\mathbb M^\epsilon}$ is a complex and in Lemma~\ref{resol} that ${\mathbb M^\epsilon}$ is a resolution when Hypotheses \ref{87hyp6} are in effect. 
Ultimately, ${\mathbb M^\epsilon}$ is a subcomplex of ${\mathbb L^\epsilon}$,  ${\mathbb L^\epsilon}$ is the mapping cone of
$$\xymatrix{\operatorname{Tot}({\mathbb T^\epsilon})\ar[d]^{\xi^{\epsilon}}\\\operatorname{Tot}({\mathbb B}),}$$ and ${\mathbb T^\epsilon}$ and ${\mathbb B}$ are quotient sub-double complexes of the double complex $\mathbb V$. We use ${\mathbb L^\epsilon}$, ${\mathbb T^\epsilon}$, ${\mathbb B}$, and $\mathbb V$ to prove Lemma~\ref{resol}; however, ${\mathbb M^\epsilon}$ is the object of interest in this paper; and therefore, we introduce it, in complete detail, first.
 
The  conventions and notation of Section~\ref{not-con} are used throughout this section; in particular, the modules $V^\bullet_{i,j}$ are 
defined in \ref{V-bullet}, the symbols $X_{\ell}$, $Y_{\ell}$ and $\binom{Y}{i}$ are defined in \ref{87Not1},  the notation $$\textstyle \bigoplus\limits_{\gf{\epsilon\le J}{I+J\le \delta-1}}$$ is explained in \ref{2.8}, the function
$\chi$ is explained in \ref{chi}, and the bases $\omega_{G^*},\omega_G$ are explained in \ref{omega}. As one reads Definition~\ref{doodles'}, it  might be helpful to simultaneously follow  Example~\ref{CompleteExample} where we record ${\mathbb M^\epsilon}$, all of its constituent pieces, and all of its forms, when $(\goth g,\goth f)=(3,9)$ and $\epsilon=\frac \delta2$. 

\begin{definition}\label{doodles'}
Adopt Data~{\rm{\ref{87data6}}} and 
{\rm{\ref{unm-data}}}. The maps and modules ${\mathbb M^\epsilon}$ are described as follows:
\begin{enumerate}[\rm(a)]
\item\label{doodles'.a} As a graded $R$-module $${\mathbb M^\epsilon}=(\bigoplus\limits_{\gf{\epsilon\le J}{I+J\le \delta-1}} V_{I,J}^{{\mathbb T^\epsilon}}) \oplus (\bigoplus\limits_{\gf{j\le \epsilon -1}{\delta\le i+j}} V_{i,j}^{{\mathbb B}}) \oplus {\textstyle(\bigwedge^{\goth f}F\otimes \bigwedge^{\goth g}G)},$$
with \begin{enumerate}[\rm(i)]
\item $V_{I,J}^{{\mathbb T^\epsilon}}$ in position $I+2J-\delta+2$,
\item $V_{i,j}^{{\mathbb B}}$ in position $i+2j-\delta+1$, and 
\item $\textstyle(\bigwedge^{\goth f}F\otimes \bigwedge^{\goth g}G)$ in position $0$.
 \end{enumerate}

\item\label{doodles'.b} The $R$-module homomorphisms  of ${\mathbb M^\epsilon}$ are  described below.
\begin{enumerate}[\rm(i)] \item\label{med-top-to-bot'} If $I+2J-\delta+2=N$, $2\le N$, $0\le I$, $\epsilon\le J$, and $I+J\le \delta-1$, then $V_{I,J}^{{\mathbb T^\epsilon}}$ is a summand of ${\mathbb M^{\epsilon}}_N$ and $$d(f_I\otimes \gamma_J\in V_{I,J}^{{\mathbb T^\epsilon}})
=\begin{cases} 
\phantom{+}\chi(\epsilon\le J-1)\sum\limits_{\ell=1}^{\goth g}\Psi(X_\ell)\wedge f_I\otimes Y_\ell(\gamma_J)\in V_{I+1,J-1}^{{\mathbb T^{\epsilon}}}\subseteq {\mathbb M^\epsilon}_{N-1}\\
+\tau(f_I)\otimes \gamma_J\in V_{I-1,J}^{{\mathbb T^\epsilon}}\subseteq {\mathbb M^\epsilon}_{N-1}\\
+\sum\limits_{\gf{i+2j=I+2J}{\delta\le i+j}}(-1)^{I+j}\binom{J-1-j}{J-\epsilon} \sum\limits_{m\in \binom{Y}{J-j}} [D(\mu)](m^*)\wedge f_I\otimes m(\gamma_J)\\\hskip1in\in V_{i,j}^{{\mathbb B}}\subseteq {\mathbb M^\epsilon}_{N-1}.
\end{cases}$$

\item If $i+2j-\delta+1=N$, $2\le N$, $\delta\le i+j$,  $0\le i,j$, and $j\le \epsilon-1$, then $V_{i,j}^{{\mathbb B}}$ is a summand of 
${\mathbb M^\epsilon}_N$ and 
 $$d(f_i\otimes \gamma_j\in V_{i,j}^{{\mathbb B}}) 
=\begin{cases} 
\phantom{+}\sum\limits_{\ell=1}^{\goth g}\Psi(X_\ell)\wedge f_i\otimes Y_\ell(\gamma_j)\in V_{i+1,j-1}^{{\mathbb B}}\subseteq {\mathbb M^\epsilon}_{N-1}\\
+\chi(\delta\le i+j-1)\tau (f_i)\otimes \gamma_j\in V_{i-1,j}^{{\mathbb B}}\subseteq {\mathbb M^\epsilon}_{N-1}.
\end{cases}$$
\item The $R$-module homomorphism  
\begin{align}\textstyle d:{\mathbb M^\epsilon}_1&=\textstyle V_{\delta,0}^{{\mathbb B}}\oplus  \chi(\epsilon=\frac{\delta-1}2)V_{0,\epsilon}^{\mathbb T^\epsilon}\to{\mathbb M^\epsilon}_0={\textstyle(\bigwedge^{\goth f}F\otimes \bigwedge^{\goth g}G)}\label{blip}\\\intertext{is}
\textstyle d(f_\delta\in V_{\delta,0}^{{\mathbb B}})&=\textstyle f_\delta\wedge (\bigwedge^{\goth g}\Psi)(\omega_{G^*})\otimes \omega_G, \notag\\\intertext{and, if $\epsilon= \frac{\delta-1}2$, then} 
d(\gamma_\epsilon\in V_{0,\epsilon}^{\mathbb T^{\epsilon}})&\textstyle =
c(\gamma_\epsilon)\text{ for $c$ as defined in {\rm(\ref{c-2.3})}.}
\label{blop}
\end{align}
\end{enumerate}
\end{enumerate}
\end{definition}
 \begin{remarks}\label{num-info-Med}\begin{enumerate}[\rm(a)]

\item\label{num-info-Med.new-a} If $\delta=1$ and $\epsilon=\frac{\delta-1}2$, then $\mathbb M^\epsilon$ does not have any non-zero summands of the form $V_{i,j}^{\mathbb B}$. Otherwise, the module $V_{\goth f,\epsilon-1}^{\mathbb B}$ is a non-zero summand of $\mathbb M^\epsilon_{\goth g+2\epsilon-1}$; furthermore, if ${\goth g+2\epsilon-1<N}$,  then ${\mathbb M^\epsilon}_N$ does not contain any non-zero summands of the form $V_{i,j}^{{\mathbb B}}$. The value of  $\goth g+2\epsilon-1$ is 
$$\begin{cases} \goth f-2,&\text{if $\epsilon=\frac{\delta-1}2$,}\\
\goth f-1,&\text{if $\epsilon=\frac \delta2$, and }\\\goth f,&\text{if $\epsilon=\frac{\delta+1}2$}.\end{cases}$$

\item\label{num-info-Med.b} 
If $\delta=1$ and $\epsilon=\frac{\delta+1}2$, then $\mathbb M^\epsilon$ does not have any non-zero summands of the form $V_{I,J}^{\mathbb T^\epsilon}$. Otherwise, 
$V_{0,\delta-1}^{{\mathbb T^\epsilon}}$ is a non-zero summand of ${\mathbb M^\epsilon}_{\delta}$; furthermore, if $\delta+1\le N$, then ${\mathbb M^\epsilon}_N$ does not contain any non-zero summands of the form $V_{I,J}^{{\mathbb T^\epsilon}}$. 

\item\label{num-info-Med.a} The largest index $N$ with ${\mathbb M^\epsilon}_N\neq 0$ is
\begin{equation}\label{last}N_{\max}=
\begin{cases} 
1,&\text{if $\epsilon=\frac{\delta-1}2$ and $\delta=1$},\\
\goth f-1,&
\text{if $\epsilon=\frac{\delta-1}2$ and $1= \goth g$,}\\
\goth f-2,&
\text{if $\epsilon=\frac{\delta-1}2$, $2\le \delta$, and $2\le \goth g$,}\\
\goth f-1,&\text{if $\epsilon=\frac \delta2$, and }\\
\goth f,&\text{if $\epsilon=\frac{\delta+1}2$};\end{cases}\end{equation} furthermore, if $N_{\max}$ is the parameter of (\ref{last}), then ${\mathbb M^\epsilon}_{N_{\max}}$ is equal to
$$\begin{cases}
V_{0,\delta-1}^{\mathbb T^\epsilon},&\begin{array}{l}\text{if $\epsilon=\frac{\delta-1}2$
and $1=\goth g$}\\\text{or  $\epsilon=\frac{\delta-1}2$
and $1=\delta$,
}\end{array}\vspace{8pt}\\
\chi(\epsilon=\frac{\delta-1}2)\chi(2=\goth g)V_{0,\delta-1}^{\mathbb T}\oplus
\chi(\epsilon=\frac{\delta}2)\chi(1=\goth g)V_{0,\delta-1}^{\mathbb T}\oplus
V_{\goth f,\epsilon-1}^{\mathbb B},
&\begin{array}{l}\text{otherwise.}\end{array}\end{cases}$$

\item As noted in (\ref{num-info-Med.b}), $\mathbb M^\epsilon$ does not have any non-zero summands of the form $V_{I,J}^{\mathbb T^\epsilon}$ if $\delta=1$ and $\epsilon=\frac{\delta+1}2$. Otherwise, the module $V_{0,\epsilon}^{\mathbb T^\epsilon}$ is a non-zero summand of ${\mathbb M^\epsilon}_{N_0}$ for
\begin{equation} N_0=2\epsilon -\delta+2=\begin{cases}
1,&\text{if $\epsilon=\frac{\delta-1}2$,}\\
2,&\text{if $\epsilon=\frac{\delta}2$, and }\\
3,&\text{if $\epsilon=\frac{\delta+1}2$ and $2\le \delta$;}\end{cases}\end{equation} furthermore, if $N<N_0$, then ${\mathbb M^\epsilon}_N$ does not contain any non-zero summands of the form $V_{I,J}^{{\mathbb T^\epsilon}}$. 

\item\label{num-info-Med.c} If $R$ is a bi-graded ring and  $$\tau:R(-1,0)^{\goth f}\to R\quad\text{and}\quad \mu: R(0,-1)^{\goth g}\to R^{\binom{\goth f}2}$$ are bi-homogeneous $R$-module homomorphisms, then the maps and modules of ${\mathbb M^\epsilon}$ are bi-homogeneous with 
\begin{align}V_{I,J}^{{\mathbb T^\epsilon}}&\simeq R(\goth f-2\goth g-I-2J,-\goth g-J)^{\operatorname{rank} V_{I,J}^{{\mathbb T^\epsilon}}},\notag\\ V_{i,j}^{{\mathbb B}}
&\simeq R(\goth f-2\goth g-i-2j,-\goth g-j)^{\operatorname{rank} V_{i,j}^{{\mathbb B}}},\quad\text{and}\notag\\ \textstyle(\bigwedge^{\goth f}F\otimes \bigwedge^{\goth g}G)&\simeq R.\notag\end{align}
Indeed, under the given hypotheses,
$$\Psi:R(-1,-1)^{\goth g}\to R^{\goth f}\quad\text{and}\quad  D(\mu):R(0,-J)^{\binom{J+\goth g-1}{J}}\to R^{\binom{\goth f}{2J}}$$ are also  
bi-homogeneous $R$-module homomorphisms.
\item The hypotheses of (\ref{num-info-Med.c}) are in effect in the generic case where $$R=R_0[T_1,\dots,T_{\goth f},\{A_{i,j,k}\mid 1\le i<j\le \goth f\text{ and } 1\le k\le \goth g\}],$$$\deg T_i=(1,0)$, $\deg A_{i,j,k}=(0,1)$,  $\tau(e_i)=T_i$, for $1\le i\le \goth f$, and $$\mu(X_k)=\sum_{1\le i<j\le \goth f}A_{i,j,k} e_i\wedge e_j,$$ for $1\le k\le \goth g$, with 
$e_1,\dots,e_{\goth f}$ a basis for $F$ and  $X_1,\dots,X_{\goth g}$  a basis for $G^*$. 
\item\label{num-info-Med.d} If $R$ is a graded ring and $\tau:R(-1)^{\goth f}\to R$ and $\mu:R^{\goth g}\to R^{\binom{\goth f}2}$ are homogeneous $R$-module homomorphisms, then the maps and modules of ${\mathbb M^\epsilon}$ are homogeneous with 
$${\mathbb M^\epsilon}_N\simeq 
\begin{cases}
R&\text{if $N=0$}\\
R(-\goth g +2-N)^{\beta'_N}\oplus R(-\goth g +1-N)^{\beta_N}&\text{if $1\le N\le 
N_{\max}$, as given in (\ref{last}) },\end{cases}$$
where
\begin{equation}\label{beta-N}\beta'_N=\sum\limits_{\gf{I+2J-\delta+2=N}{\gf{\epsilon\le J}{I+J\le \delta-1}}} \operatorname{rank} V_{I,J}^{{\mathbb T^\epsilon}}\quad\text{and}\quad
\beta_N=\sum\limits_{\gf{i+2j-\delta+1=N}{\gf{j\le \epsilon -1}{\delta\le i+j}}} \operatorname{rank} V_{i,j}^{{\mathbb B}},\end{equation}for $1\le N\le N_{\max}$.
\item\label{special hypotheses} The hypotheses of  (\ref{num-info-Med.d}) are in effect in the special case where $R$ is the polynomial ring 
$$R=R_0[T_1,\dots,T_{\goth f}],$$ $\deg T_i=1$, $\deg \alpha_{i,j,k}=0$,  $\tau(e_i)=T_i$, for $1\le i\le \goth f$, and $$\mu(X_k)=\sum_{1\le i<j\le \goth f}\alpha_{i,j,k} e_i\wedge e_j,$$ for $1\le k\le \goth g$, with 
$e_1,\dots,e_{\goth f}$ a basis for $F$, and  $X_1,\dots,X_{\goth g}$  a basis for $G^*$, and $\alpha_{i,j,k}\in R_0$. 

\item\label{num-info-Med.e} If $\epsilon$ is equal to $\frac\delta2$ or $\frac{\delta+1}2$,  ${\mathbb M^\epsilon}$ is a resolution, and the hypotheses of (\ref{special hypotheses}) are in effect with $R_0$ a field,   then there is a quasi-isomorphism from ${\mathbb M^\epsilon}$ to the $\goth g$-linear minimal resolution
$$ 0\to R(-\goth g-N_{\max}+1)^{b_{N_{\max}}}\to\dots \to R(-\goth g-1)^{b_{2}}\to R(-\goth g)^{b_1}\to R, $$ 
with $b_N=\beta_N-\beta_{N+1}'$ for $\beta_N$ and $\beta'_N$ as defined in (\rm{\ref{beta-N}}). (This remark is a consequence of three facts. First of all, 
 the generators of ${\mathbb M^{\epsilon}}_1$ all have degree $\goth g$; secondly, every homogeneous $R$-module homomorphism $$R(-a)\to R(-a-1)$$ is necessarily zero; and thirdly, the image of a homogeneous $R$-module homomorphism $R(-a)^A\to R(-a)^{B}$ is a free summand of the target.)
 
However, the analogous statement is not true when $\epsilon=\frac{\delta-1}{2}$. See Examples~\ref{exs-Med}.\ref{4.4.c} and \ref{exs-Med}.\ref{4.4.c-may-7}.   
\item\label{num-info-Med.i} In Definition \ref{doodles'}.\ref{med-top-to-bot'}, it is not necessary to impose the condition $j\le \epsilon-1$ in the $V_{i,j}^{{\mathbb B}}$-component of $d(V_{I,J}^{{\mathbb T^\epsilon}})$; because, if the expression
\begin{equation}\label{exp'}\sum\limits_{\gf{i+2j=I+2J}{\delta\le i+j}}(-1)^{I+j}\binom{J-1-j}{J-\epsilon} \sum\limits_{m\in \binom{Y}{J-j}} [D(\mu)](m^*)\wedge f_I\otimes m(\gamma_J)\in V_{i,j}^{{\mathbb B}}\end{equation} is non-zero, then the inequality $j\le \epsilon-1$ is automatically satisfied. Indeed, if (\ref{exp'}) is non-zero, then $j\le J$. However $j$ can not equal $J$; because, if $j=J$, then $i=I$ and $\delta\le i+j=I+J\le \delta-1$; which is impossible. Thus, $0\le J-j-1$. On the other hand, the binomial coefficient $\binom{J-1-j}{J-\epsilon}$ is not zero; so $0\le J-\epsilon\le J-1-j$, and $j\le \epsilon-1$, as claimed. (This remark, which looks technical, is actually the proof of the assertion that ${\mathbb M^\epsilon}$ is a subcomplex of ${\mathbb L^\epsilon}$; see Remark~\ref{rmk5}.)
\end{enumerate}
\end{remarks}

\begin{example}\label{CompleteExample}Let $(\goth g,\goth f)=(3,9)$. The parameter $\delta$ (which equals $\goth f-\goth g$) is even, and therefore the constraint $\lceil\frac{\delta-1}2\rceil \le \epsilon\le \lceil\frac{\delta}2\rceil$ forces $\epsilon$ to equal $\frac\delta2=3$. In this example, we record ${\mathbb M^\epsilon}$, all of its constituent pieces, and all of its forms. 
Definition~\ref{doodles'}.\ref{doodles'.a} says that 
\begin{align}{\mathbb M^\epsilon}& = \bigoplus_{(\text{\ref{I,J}})} V_{I,J}^{{\mathbb T^\epsilon}} \oplus \bigoplus_{(\text{\ref{i,j}})} V_{i,j}^{{\mathbb B}} \oplus \textstyle (\bigwedge^9F \otimes \bigwedge^3G),\notag\\\intertext{with} (I,J)\in & \{(I,3)\mid 0\le I\le 2\}\cup \{(I,4)\mid 0\le I\le 1\}\cup \{(0,5)\}\label{I,J}\\\intertext{and}
(i,j)\in & \{(i,0)\mid 6\le i\le 9\}\cup \{(i,1)\mid 5\le i\le 9\}\cup \{(i,2)\mid 4\le i\le 9\}.\label{i,j}\end{align}
\begin{table}
\begin{center}
\begingroup\allowdisplaybreaks
\begin{align}\label{39top}\xymatrix{V_{0,5}^{{\mathbb T^\epsilon}}\ar[r]^{\Psi}&V_{1,4}^{{\mathbb T^\epsilon}}\ar[r]^{\Psi}\ar[d]^{\tau}&
V_{2,3}^{{\mathbb T^\epsilon}}\ar[d]^{\tau}\\
&V_{0,4}^{{\mathbb T^\epsilon}}\ar[r]^{\Psi}&
V_{1,3}^{{\mathbb T^\epsilon}}\ar[d]^{\tau}\\&&V_{0,3}^{{\mathbb T^\epsilon}}}\\
\intertext{and}
\label{39bot}
\xymatrix{
V^{\mathbb B}_{9,2}\ar[d]^\tau\\
V^{\mathbb B}_{8,2}\ar[d]^\tau\ar[r]^\Psi&V^{\mathbb B}_{9,1}\ar[d]^\tau\\
V^{\mathbb B}_{7,2}\ar[d]^\tau\ar[r]^\Psi&V^{\mathbb B}_{8,1}\ar[d]^\tau\ar[r]^\Psi&V^{\mathbb B}_{9,0}\ar[d]^\tau\\
V^{\mathbb B}_{6,2}\ar[d]^\tau\ar[r]^\Psi&V^{\mathbb B}_{7,1}\ar[d]^\tau\ar[r]^\Psi&V^{\mathbb B}_{8,0}\ar[d]^\tau\\
V^{\mathbb B}_{5,2}\ar[d]^\tau\ar[r]^\Psi&V^{\mathbb B}_{6,1}\ar[d]^\tau\ar[r]^\Psi&V^{\mathbb B}_{7,0}\ar[d]^\tau\\
V^{\mathbb B}_{4,2}\ar[r]^\Psi&V^{\mathbb B}_{5,1}\ar[r]^\Psi&V^{\mathbb B}_{6,0}\ar[rr]^{\wedge^3\Psi\hskip20pt}&&\bigwedge^9F\otimes \bigwedge^3G.}\end{align}\endgroup
\caption{{ Double complexes which are used in the construction of the first version (\ref{first version}) of $\mathbb M^\epsilon$ in Example~\ref{CompleteExample}.}}\label{WE ARE NOT}
\end{center}\end{table}

\noindent It is useful to consider the double complexes of Table~\ref{WE ARE NOT}. (The double complexes $\mathbb T^\epsilon$ and $\mathbb B$ are officially introduced in Definition~\ref{87.11}; the complex $\operatorname{Tot}(\text{\ref{39top}})$ is a shift (see (\ref{new})) of a subcomplex of $\operatorname{Tot} (\mathbb T^\epsilon)$ 
and the complex $\operatorname{Tot}(\text{\ref{39bot}})$ is a subcomplex of $\operatorname{Tot} (\mathbb B)$.) It is shown in Lemma~\ref{MOC} that there is a map of complexes $\xi$ from a shift of $\operatorname{Tot}(\text{\ref{39top}})$ to $\operatorname{Tot}(\text{\ref{39bot}})$, with ${\xi(V_{0,3}^{\mathbb T^\epsilon})\subseteq V_{6,0}^{\mathbb B}}$, so that ${\mathbb M^\epsilon}$ is the mapping cone of $\xi$. As a graded module, ${\mathbb M^\epsilon}$ is  
\begin{align}0\to V^{\mathbb B}_{9,2}\to V^{\mathbb B}_{8,2}\to 
\begin{matrix} V^{\mathbb T^\epsilon}_{0,5}
\\\oplus\\V^{\mathbb B}_{7,2}\\\oplus\\ V^{\mathbb B}_{9,1}\end{matrix} \to
\begin{matrix} V^{\mathbb T^\epsilon}_{1,4}\\\oplus \\V^{\mathbb B}_{6,2}\\\oplus\\V^{\mathbb B}_{8,1}\end{matrix}
 \to 
\begin{matrix} V^{\mathbb T^\epsilon}_{0,4}\\
\oplus\\V^{\mathbb T^\epsilon}_{2,3}\\\oplus\\ V^{\mathbb B}_{5,2}\\\oplus \\ V^{\mathbb B}_{7,1}\\\oplus
\\ V^{\mathbb B}_{9,0}
\end{matrix}
\to \begin{matrix} V^{\mathbb T^\epsilon}_{1,3}\\\oplus\\V^{\mathbb B}_{4,2}\\\oplus\\
 V^{\mathbb B}_{6,1}\\\oplus \\ V^{\mathbb B}_{8,0}
\end{matrix}
\to \begin{matrix} V^{\mathbb T^\epsilon}_{0,3}\\\oplus\\V^{\mathbb B}_{5,1}
\\\oplus\\
V^{\mathbb B}_{7,0}\end{matrix}
\to V^{\mathbb B}_{6,0}\to \textstyle\bigwedge^9F\otimes \bigwedge^3G,\label{first version}
\end{align}
with $\bigwedge^9F\otimes \bigwedge^3G$ in position zero. (One can use the formulas given in Definition~\ref{doodles'}.\ref{doodles'.a} to calculate the position of each summand $V^{\mathbb T^\epsilon}_{I,J}$ and $V^{\mathbb B}_{i,j}$ of ${\mathbb M^\epsilon}$; however, if the double complexes (\ref{39top}) and (\ref{39bot}) are available, then it is easy to read the position of each summand of ${\mathbb M^\epsilon}$ from the mapping cone construction.) If the data is bi-homogeneous, as described in Remark~\ref{num-info-Med}.\ref{num-info-Med.c}, then 
${\mathbb M^\epsilon}$ is given in Table~\ref{NEITHER}.
\begin{table}
\begin{center}
\begin{align}0\to R(-10,-5)^6\to R(-9,-5)^{54}\to \begin{matrix} R(-7,-8)^{21}\\\oplus\\R(-8,-5)^{216}\\\oplus\\ R(-8,-4)^3\end{matrix} \to
\begin{matrix} R(-6,-7)^{135}\\\oplus\\R(-7,-5)^{504}\\\oplus\\
R(-7,-4)^{27}\end{matrix}\notag\\ \notag\\ \to \begin{matrix} R(-5,-7)^{15}\\\oplus\\R(-5,-6)^{360}\\\oplus\\ R(-6,-5)^{756}\\\oplus \\ R(-6,-4)^{108}\\\oplus\\ R(-6,-3)^1
\end{matrix}\to \begin{matrix} R(-4,-6)^{90}\\\oplus\\R(-5,-5)^{756}\\\oplus\\ R(-5,-4)^{252}\\\oplus \\ R(-5,-3)^{9}
\end{matrix}\to \begin{matrix} R(-3,-6)^{10}\\\oplus\\R(-4,-4)^{378}
\\\oplus\\
R(-4,-3)^{36}\end{matrix}\to R(-3,-3)^{84}\to R.\notag\end{align}
\caption{The complex  ${\mathbb M^\epsilon}$ from Example~\ref{CompleteExample} when
the data is bi-homogeneous, as described in Remark~\ref{num-info-Med}.\ref{num-info-Med.c}.
}\label{NEITHER}
\end{center}\end{table}
The rank of $V^\bullet_{I,J}$ is given in \ref{V-bullet}. The bi-homogeneous twists in ${\mathbb M^\epsilon}$ are given in Remark~\ref{num-info-Med}.\ref{num-info-Med.c} or may be read from the double complexes (\ref{39top}) and (\ref{39bot}) as soon as one knows that $\bigwedge^9F\otimes \bigwedge^3G=R$ and $V^{{\mathbb T}}_{0,3}=R(-3,-6)^{10}$. If the hypotheses of Remark~\ref{num-info-Med}.\ref{num-info-Med.d} are in effect, then ${\mathbb M^\epsilon}$ is 
\begin{align}0\to R(-10)^6\to R(-9)^{54}\to \begin{matrix} R(-7)^{21}\\\oplus\\ R(-8)^{219}\end{matrix}\to \begin{matrix} R(-6)^{135}\\\oplus\\R(-7)^{531}\end{matrix} \to
\begin{matrix} R(-5)^{375}\\\oplus\\ R(-6)^{865}\end{matrix}\notag\\ \notag\\
\to \begin{matrix} R(-4)^{90}\\\oplus \\R(-5)^{1017}\end{matrix}
\to \begin{matrix} R(-3)^{10}\\\oplus \\R(-4)^{414}\end{matrix}\to R(-3)^{84}\to R.\notag\end{align}
If the hypotheses of Remark~\ref{num-info-Med}.\ref{num-info-Med.e} are in effect, then ${\mathbb M^\epsilon}$ is quasi-isomorphic to
\begin{align}0\to R(-10)^6\to R(-9)^{54}\to R(-8)^{219}\to R(-7)^{510} \to
R(-6)^{730}\notag\\ \label{c}\to R(-5)^{642}\to R(-4)^{324}\to R(-3)^{74}\to R.\end{align}
 \end{example}

\begin{examples}\label{exs-Med} These examples are presented more quickly than Example~\ref{CompleteExample}. \begin{enumerate}[\rm(a)]

\item\label{4.4.a} Let $(\goth g,\goth f)=(2,6)$ and $\epsilon=\frac \delta 2=2$. 
The modules of $\mathbb M^\epsilon$ are
$$0\to V_{6,1}^{\mathbb B}\to \begin{matrix}V_{0,3}^{\mathbb T^\epsilon}\\\oplus\\V_{5,1}^{\mathbb B}\end{matrix} \to
\begin{matrix} V_{1,2}^{\mathbb T^\epsilon}\\\oplus\\V_{4,1}^{\mathbb B}\\\oplus\\
V_{6,0}^{\mathbb B}\end{matrix}\to \begin{matrix} V_{0,2}^{\mathbb T^\epsilon}\\\oplus\\V_{3,1}^{\mathbb B}\\\oplus\\
V_{5,0}^{\mathbb B}\end{matrix}\to V_{4,0}^{\mathbb B}\to \textstyle \bigwedge^6F\otimes \bigwedge^2G.$$
If the hypotheses of Remark~\ref{num-info-Med}.\ref{num-info-Med.c} are in effect, then ${\mathbb M^\epsilon}$ is 
$$0\to R(-6,-3)^2\to \begin{matrix} R(-4,-5)^4\\\oplus\\R(-5,-3)^{12}\end{matrix} \to
\begin{matrix} R(-3,-4)^{18}\\\oplus\\R(-4,-3)^{30}\\\oplus\\
R(-4,-2)^1\end{matrix}\to \begin{matrix} R(-2,-4)^{3}\\\oplus\\R(-3,-3)^{40}\\\oplus\\
R(-3,-2)^6\end{matrix}\to R(-2,-2)^{15}\to R.$$
If the hypotheses of Remark~\ref{num-info-Med}.\ref{num-info-Med.d} are in effect, then ${\mathbb M^\epsilon}$ is 
$$0\to R(-6)^2\to \begin{matrix} R(-4)^4\\\oplus\\R(-5)^{12}\end{matrix} \to
\begin{matrix} R(-3)^{18}\\\oplus\\R(-4)^{31}\end{matrix}\to \begin{matrix} R(-2)^{3}\\\oplus\\R(-3)^{46}\end{matrix}\to R(-2)^{15}\to R.$$
If the hypotheses of Remark~\ref{num-info-Med}.\ref{num-info-Med.e} are in effect, then ${\mathbb M^\epsilon}$ is quasi-isomorphic to
\begin{equation}\label{a}0\to R(-6)^2\to R(-5)^{12}\to R(-4)^{27}\to R(-3)^{28}\to R(-2)^{12}\to R.\end{equation}

\item\label{4.4.b} Let $(\goth g,\goth f)=(3,6)$ and $\epsilon=\frac{\delta+1}2=2$. The modules of  $\mathbb M^\epsilon$ are 
$$0\to V_{6,1}^{\mathbb B}\to V_{5,1}^{\mathbb B}\to \begin{matrix} V_{4,1}^{\mathbb B}\\\oplus\\V_{6,0}^{\mathbb B}\end{matrix} \to
\begin{matrix} V_{0,2}^{\mathbb T^\epsilon}\\\oplus\\V_{3,1}^{\mathbb B}\\\oplus\\
V_{5,0}^{\mathbb B}\end{matrix}\to \begin{matrix} V_{2,1}^{\mathbb B}\\\oplus\\V_{4,0}^{\mathbb B}
\end{matrix}\to V_{3,0}^{\mathbb B}\to \textstyle \bigwedge^6F\otimes \bigwedge^3G.$$
  If the hypotheses of Remark~\ref{num-info-Med}.\ref{num-info-Med.c} are in effect, then ${\mathbb M^\epsilon}$ is 
\begin{align}0\to R(-8,-4)^3\to R(-7,-4)^{18}\to \begin{matrix} R(-6,-4)^{45}\\\oplus\\R(-6,-3)^{1}\end{matrix} \to
\begin{matrix} R(-4,-5)^{6}\\\oplus\\R(-5,-4)^{60}\\\oplus\\
R(-5,-3)^6\end{matrix}\notag\\ \notag\\\to \begin{matrix} R(-4,-4)^{45}\\\oplus\\R(-4,-3)^{15}
\end{matrix}\to R(-3,-3)^{20}\to R.\end{align}
If the hypotheses of Remark~\ref{num-info-Med}.\ref{num-info-Med.d} are in effect, then ${\mathbb M^\epsilon}$ is 
$$0\to R(-8)^3\to R(-7)^{18}\to R(-6)^{46}\to \begin{matrix} R(-4)^6\\\oplus\\R(-5)^{66}\end{matrix} \to
R(-4)^{60}\to R(-3)^{20}\to R.$$
If the hypotheses of Remark~\ref{num-info-Med}.\ref{num-info-Med.e} are in effect, then ${\mathbb M^\epsilon}$ is quasi-isomorphic to
\begin{equation}\label{b}0\to R(-8)^3\to R(-7)^{18}\to R(-6)^{46}\to R(-5)^{66} \to
R(-4)^{54}\to R(-3)^{20}\to R.\end{equation}

\item\label{4.4.c} Let $(\goth g,\goth f)=(3,6)$ and $\epsilon=\frac{\delta-1}2=1$. The modules of $\mathbb M^\epsilon$ are $$0\to
V_{6,0}^{{\mathbb B}}
\to
\begin{matrix} V_{0,2}^{{\mathbb T^1}}
\\
\oplus
\\
V_{5,0}^{{\mathbb B}}
\end{matrix}
\to
\begin{matrix}
V_{1,1}^{{\mathbb T^1}}
\\
\oplus
\\
V_{4,0}^{{\mathbb B}}
\end{matrix}
\to
\begin{matrix}
V_{0,1}^{{\mathbb T^1}}
\\
\oplus\\
V_{3,0}^{{\mathbb B}}
\end{matrix}
\to
{\textstyle\bigwedge^{6}F\otimes \bigwedge^{3}G}.$$
If the hypotheses of Remark~\ref{num-info-Med}.\ref{num-info-Med.c} are in effect, then ${\mathbb M^\epsilon}$ is 
\begin{equation}\label{b'}0\to
R(-6,-3)^1
\to
\begin{matrix} R(-4,-5)^6
\\
\oplus
\\
R(-5,-3)^6
\end{matrix}
\to
\begin{matrix}
R(-3,-4)^{18}
\\
\oplus
\\
R(-4,-3)^{15}
\end{matrix}
\to
\begin{matrix}
R(-2,-4)^{3}
\\
\oplus\\
R(-3,-3)^{20}
\end{matrix}
\to
R.\end{equation}
If the hypotheses of Remark~\ref{num-info-Med}.\ref{num-info-Med.d} are in effect, then ${\mathbb M^\epsilon}$ is 
$$0\to
R(-6)^1
\to
\begin{matrix} R(-4)^6
\\
\oplus
\\
R(-5)^6
\end{matrix}
\to
\begin{matrix}
R(-3)^{18}
\\
\oplus
\\
R(-4)^{15}
\end{matrix}
\to
\begin{matrix}
R(-2)^{3}
\\
\oplus\\
R(-3)^{20}
\end{matrix}
\to
R.$$
In the present example, $\epsilon=\lceil \frac {\delta-1} 2\rceil$ and $\epsilon\neq \lceil \frac {\delta} 2\rceil$. If the rest of the hypotheses of Remark~\ref{num-info-Med}.\ref{num-info-Med.e}, other than the hypothesis $\epsilon= \lceil \frac {\delta} 2\rceil$, are in effect, then we do not know the graded Betti numbers in a minimal homogeneous resolution of $\operatorname{H}^0(\mathbb M^\epsilon)$; indeed, we do not  know if these Betti numbers can be determined from the data $(\epsilon,\goth f,\goth g)$ or if more information about the $R$-module $\mu$ (of Data \ref{87data6}) is required.

\item\label{4.4.c-may-7} Let $(\goth g,\goth f)=(4,7)$ and $\epsilon=\frac{\delta-1}2=1$. The modules of $\mathbb M^\epsilon$ are $$0\to
V_{7,0}^{{\mathbb B}}\to
V_{6,0}^{{\mathbb B}}
\to
\begin{matrix} V_{0,2}^{{\mathbb T^1}}
\\
\oplus
\\
V_{5,0}^{{\mathbb B}}
\end{matrix}
\to
\begin{matrix}
V_{1,1}^{{\mathbb T^1}}
\\
\oplus
\\
V_{4,0}^{{\mathbb B}}
\end{matrix}
\to
\begin{matrix}
V_{0,1}^{{\mathbb T^1}}
\\
\oplus\\
V_{3,0}^{{\mathbb B}}
\end{matrix}
\to
{\textstyle\bigwedge^{7}F\otimes \bigwedge^{4}G}.$$
If the hypotheses of Remark~\ref{num-info-Med}.\ref{num-info-Med.c} are in effect, then ${\mathbb M^\epsilon}$ is 
\begin{equation}\notag 0\to R(-8,-4)^1\to
R(-7,-4)^7
\to
\begin{matrix} R(-5,-6)^{10}
\\
\oplus
\\
R(-6,-4)^{21}
\end{matrix}
\to
\begin{matrix}
R(-4,-5)^{28}
\\
\oplus
\\
R(-5,-4)^{35}
\end{matrix}
\to
\begin{matrix}
R(-3,-5)^{4}
\\
\oplus\\
R(-4,-4)^{35}
\end{matrix}
\to
R.\end{equation}
If the hypotheses of Remark~\ref{num-info-Med}.\ref{num-info-Med.d} are in effect, then ${\mathbb M^\epsilon}$ is 
\begin{equation}\label{notlinear}0\to R(-8)^1\to
R(-7)^7
\to
\begin{matrix} R(-5)^{10}
\\
\oplus
\\
R(-6)^{21}
\end{matrix}
\to
\begin{matrix}
R(-4)^{28}
\\
\oplus
\\
R(-5)^{35}
\end{matrix}
\to
\begin{matrix}
R(-3)^{4}
\\
\oplus\\
R(-4)^{35}
\end{matrix}
\xrightarrow{d_1}
R.\end{equation}
Notice that the graded Betti numbers show that it is not possible for (\ref{notlinear}) to be quasi-isomorphic to a pure complex; indeed, the image of
$d_1$ is generated in two different degrees.

\item\label{4.4.d} One can use Macaulay2 \cite{M2} to verify the  graded Betti numbers of (\ref{c}), (\ref{a}), (\ref{b}),  and  (\ref{b'}), when the Hypotheses~\ref{87hyp6} are in effect.

\item\label{4.4.e} When $\goth f =2n+1$ is odd, $\goth g=1$, and $\epsilon=\frac{\delta}2$, then ${\mathbb M^\epsilon}$ is isomorphic to the complex 
\begin{equation}\label{aci}\mathbb M'=\bigoplus\limits_{\gf{i+j\le n}{0\le j}} {\textstyle\bigwedge^iF^*h^{(j)}}\oplus \bigoplus\limits_{\gf{p+q\le n-1}{0\le q}} {\textstyle\bigwedge^pF\lambda^{(q)}}\end{equation} of \cite[Def.~2.15]{K95}. In $\mathbb M'$, 
${\textstyle\bigwedge^iF^*h^{(j)}}$ is in position $i+2j$ and ${\textstyle\bigwedge^pF\lambda^{(q)}}$ in position ${p+2q+2}$.
The module $\bigwedge^\goth f F\otimes \bigwedge^\goth g G$ in ${\mathbb M^\epsilon}$ corresponds to $\textstyle\bigwedge^{0}F^*h^{(0)}$ 
in $\mathbb M'$, the module $V_{i,j}^{{\mathbb B}}$ in ${\mathbb M^\epsilon}$ corresponds to 
$\textstyle\bigwedge^{\goth f-i}F^*h^{(i+j-\delta)}$ 
in $\mathbb M'$, and the module $V_{I,J}^{{\mathbb T^\epsilon}}$ in ${\mathbb M^\epsilon}$ corresponds to ${\textstyle\bigwedge^IF\lambda^{(J-n)}}$ in $\mathbb M'$.

\item\label{4.4.f} When $\goth f =2n$ is even, $\goth g=1$, and $\epsilon=\frac{\delta-1}2$, then ${\mathbb M^\epsilon}$ is isomorphic to the complex 
\begin{equation}\label{gor}\mathbb M'=\bigoplus\limits_{\gf{i+j\le n-1}{0\le j}} {\textstyle\bigwedge^iF^*h^{(j)}}\oplus \bigoplus\limits_{\gf{p+q\le n-1}{0\le q}} {\textstyle\bigwedge^pF\lambda^{(q)}}\end{equation} of \cite[Thm.~2.4]{K92}. (Earlier versions of the complex may be found in \cite{K86,S}.)  In $\mathbb M'$, 
${\textstyle\bigwedge^iF^*h^{(j)}}$ is in position $i+2j$ and ${\textstyle\bigwedge^pF\lambda^{(q)}}$ in position ${p+2q+1}$.
The module $\bigwedge^\goth f F\otimes \bigwedge^\goth g G$ in ${\mathbb M^\epsilon}$ corresponds to $\textstyle\bigwedge^{0}F^*h^{(0)}$ 
in $\mathbb M'$, the module $V_{i,j}^{{\mathbb B}}$ in ${\mathbb M^\epsilon}$ corresponds to 
$\textstyle\bigwedge^{\goth f-i}F^*h^{(i+j-\delta)}$ 
in $\mathbb M'$, and the module $V_{I,J}^{{\mathbb T^\epsilon}}$ in ${\mathbb M^\epsilon}$ corresponds to ${\textstyle\bigwedge^IF\lambda^{(J+1-n)}}$ in $\mathbb M'$.

\item\label{4.4.g} Let $\delta=1$. If  $\epsilon=\frac{\delta+1}2=1$, then $\mathbb M^\epsilon$ is  
\begin{equation}\label{*star}0\to V_{\goth f,0}^{\mathbb B}\xrightarrow{\tau} V_{\goth f-1,0}^{\mathbb B}\xrightarrow{\tau} \dots \to V_{2,0}^{\mathbb B}\xrightarrow{\tau} V_{1,0}^{\mathbb B}\xrightarrow{\rm(\ref{blip})}\textstyle \bigwedge^\goth fF\otimes \bigwedge^\goth gG.\end{equation}
The subcomplex $0\to {\mathbb M^\epsilon}_\goth f \to\dots\to{\mathbb M^\epsilon}_1$ of $\mathbb M^\epsilon$ is a truncation of the Koszul complex associated to $\tau$.
If  $\epsilon=\frac{\delta-1}2=1$, then $\mathbb M^\epsilon$ is  
\begin{equation}\label{**}0\to V_{0,0}^{\mathbb T^\epsilon} \xrightarrow{\rm(\ref{blop})}\textstyle \bigwedge^\goth fF\otimes \bigwedge^\goth gG.\end{equation}
Once we prove Theorem~\ref{main}, then the exactness of (\ref{*star}) gives that $I_g(\Psi)=\alpha I_1(\tau)$ for some element $\alpha$ in $R$ and the exactness of (\ref{**}) identifies $\alpha$ as a generator of the image of
\begin{equation}\label{more**}\textstyle c(1)=\mu(X_1)\wedge (\bigwedge^{\goth g-1}\Psi)(X_2\wedge\dots \wedge X_{\goth g})\otimes Y_{\goth g}\wedge \dots\wedge Y_1\end{equation}
under any isomorphism from $\bigwedge^\goth f F\otimes \bigwedge^\goth gG$ to $R$. (We use the notation of \ref{87Not1}.) These ideas, but not this phrasing, are already known by Mark Johnson \cite{J} and Susan Morey \cite{M}.

A more concrete version of (\ref{more**}) is obtained as follows. Let $X_1,\dots, X_\goth g$ be a basis for $G^*$, $e_1,\dots,e_{\goth f}$ a basis for $F$, $$\mu(X_k)=\sum\limits_{1\le i<j\le \goth f} a_{i,j,k}\, e_i\wedge e_j\in \textstyle \bigwedge^2F,\quad \text{  $T$ be the column vector $[\tau(e_1),\dots,\tau(e_{\goth f})]^{\rm t}$},$$ and, for $1\le k\le \goth g$, let $A_k$ be the $\goth f\times \goth f$ alternating matrix with $a_{i,j,k}$ in position $i,j$ when $i<j$. In $R$, $c(1)$ is a unit times the Pfaffian of \begin{equation}\label{no-Z}\bmatrix A_1&\vline&A_2T&\vline&\dots&\vline&A_{\goth g}T\\\hline
 T^{\rm t}A_2&\vline\\
\vdots&\vline&&&0\\T^{\rm t}A_{\goth g}&\vline\endbmatrix.\end{equation}
It is useful to observe that  (\ref{no-Z}) is also the coefficient of $Z_1$ in the Pfaffian of $$\bmatrix Z_1A_1+Z_2A_2+\dots+Z_\goth g A_\goth g&\vline&A_2T&\vline&\dots&\vline&A_{\goth g}T\\\hline
 T^{\rm t}A_2&\vline\\
\vdots&\vline&&&0\\T^{\rm t}A_{\goth g}&\vline\endbmatrix.$$
In this discussion, $Z_1,\dots,Z_\goth g$ are indeterminates over $R$; they play the role of place holders.

\item\label{4.4.h} Take $\goth f$ to be odd and $\goth g=2$. If the hypotheses of Theorem~\ref{main} are in effect, then the ideal $I_\goth g(\Psi)^{\text{gumn}}$ is perfect of grade $\goth f-2=\delta$ and $R/I_{\goth g}(\Psi)^{\text{gumn}}$ is resolved by $\mathbb M^\epsilon$, for $\epsilon=\frac{\delta-1}2$. To describe a generating set for this ideal, let $A_1$ and $A_2$ be $\goth f\times \goth f$ alternating matrices with entries in $R$ and let $T$ be a $\goth f\times 1$ column vector, again with entries in $R$. The ideal  $I_\goth g(\Psi)^{\text{gumn}}$ is generated by

\begin{equation}\label{perfect-gens}\begin{array}{l}(\text{the coefficients of $Z_1^2$ and $Z_1Z_2$ in the Pfaffian of }\bmatrix Z_1A_1+Z_2A_2&\vline&A_2T\\\hline T^{\text{transpose}}A_2&\vline&0\endbmatrix)\\+ I_2(\bmatrix A_1T&\vline&A_2T\endbmatrix),\end{array}\end{equation}provided 
(\ref{perfect-gens}) has grade at least $\delta$ and $I_1(\tau)$ has grade at least $\goth f$. Once again, $Z_1$ and $Z_2$ are indeterminates over $R$; they play the role of place holders.
\end{enumerate}
\end{examples}

\begin{chunk}\label{lead-in} The final non-zero map of $\mathbb M$ is of considerable interest. For most choices of $(\goth g,\goth f)$, this map is $$\tau: V^{\mathbb B}_{\goth f,\epsilon-1} \to V^{\mathbb B}_{\goth f-1,\epsilon-1}.$$ If this map is written as a matrix, then it looks like the matrix of Table~\ref{last map}, where $e_1,\dots e_{\goth f}$ is a basis for $F$  and $T_i=\tau(e_i)$. The matrix has $\goth f\binom{\goth g+\epsilon -2}{\goth g-1}$ rows and $\binom{\goth g+\epsilon -2}{\goth g-1}$ columns. For small values of $\goth g$  or $\delta$ this general form is perturbed slightly. The complete form of the last map is given in Observation~\ref{last-map}.\end{chunk}
\begin{table}
\begin{center}
$$\bmatrix 
T_1        &\vline&&\vline&&\vline\\
T_2        &\vline&&\vline&&\vline\\
\vdots     &\vline&0&\vline&0&\vline&0\\
T_{\goth f}&\vline&&\vline&&\vline\\\hline
           &\vline& T_1&\vline&&\vline&  \\
&\vline& T_2        &\vline&&\vline&  \\
0&\vline& \vdots     &\vline&0&\vline&0  \\
&\vline& T_{\goth f}&\vline&&\vline&  \\\hline
0&\vline&0            &\vline&\ddots&\vline&0 \\\hline
&\vline& &\vline&       &\vline&T_1\\
&\vline& &\vline&       &\vline&T_2\\
0&\vline&0 &\vline&     0  &\vline&\vdots\\
&\vline& &\vline&       &\vline&T_{\goth f}
\endbmatrix$$
\caption{{ The last non-zero map of $\mathbb M^\epsilon$ for most $(\goth g,\goth f)$. See \ref{lead-in}}, Observation~\ref{last-map}, and (\ref{4.6.6'}).}\label{last map}
\end{center}\end{table}

\begin{observation}\label{last-map} Adopt Data~{\rm{\ref{87data6}}} and 
{\rm{\ref{unm-data}}}. Retain the description of $\mathbb M^\epsilon$ as given in Definition~{\rm\ref{doodles'}} and Remarks~{\rm\ref{num-info-Med};} in particular, the value of  $N_{\max}$ is given in {(\rm\ref{last})}. Then the final non-zero map of $\mathbb M^\epsilon$ is $$d_{N_{\max}}:{\mathbb M^\epsilon}_{N_{\max}}\to {\mathbb M^\epsilon}_{N_{\max}-1}$$ and this map is

\begingroup\allowdisplaybreaks 
\begin{align}
&V_{0,0}^{\mathbb T^\epsilon} \xrightarrow{\rm(\ref{blop})}\textstyle \bigwedge^\goth fF\otimes \bigwedge^\goth gG,&&\begin{array}{l}\text{if $\delta=1$ and $\epsilon=\frac{\delta-1}2$,}\end{array}\label{4.6.1'}\vspace{10pt}\\
&V_{\goth f,0}^{\mathbb B} \xrightarrow{\bmatrix \tau\endbmatrix }\textstyle V_{\goth f-1,0}^{\mathbb B},&&\begin{array}{l}\text{if $\delta=1$ and $\epsilon=\frac{\delta+1}2$,}\end{array}\label{4.6.2'}\vspace{10pt}\\
&V_{0,\delta-1}^{\mathbb T^\epsilon}\xrightarrow{\bmatrix \Psi\\\xi \endbmatrix} \begin{matrix} V_{1,\delta-2}^{\mathbb T^\epsilon}\\\oplus\\ V^{\mathbb B}_{\goth f,\epsilon-1},\end{matrix}&&\begin{array}{l}\textstyle  \text{if $2\le \delta$ and $(\epsilon,\goth g)=(\frac{\delta-1}2,1)$},\end{array}\label{4.6.3'} \vspace{10pt}\\
&\begin{matrix} V_{0,\delta-1}^{\mathbb T^\epsilon}\\\oplus\\ V^{\mathbb B}_{\goth f,\epsilon-1}\end{matrix}\xrightarrow{\bmatrix \Psi&0\\\xi&\tau \endbmatrix} \begin{matrix} \chi(3\le \delta)V_{1,\delta-2}^{\mathbb T^\epsilon}\\\oplus\\ 
V^{\mathbb B}_{\goth f-1,\epsilon-1},\end{matrix}
&&\begin{array}{l}\textstyle  \text{if $2\le \delta$ and $(\epsilon,\goth g)$ equals $(\frac{\delta-1}2,2)$ or $(\frac\delta2,1)$},\end{array}\label{4.6.4'} \vspace{10pt}\\ 
&\begin{matrix}  V^{\mathbb B}_{\goth f,\epsilon-1}\end{matrix}\xrightarrow{\bmatrix 0\\\tau \endbmatrix}  \begin{matrix} V_{0,\delta-1}^{\mathbb T^\epsilon}\\\oplus\\ V^{\mathbb B}_{\goth f-1,\epsilon-1},\end{matrix}
&&\begin{array}{l}\textstyle  \text{if $2\le \delta$ and $(\epsilon,\goth g)$ equals $(\frac{\delta-1}2,3)$, or $(\frac\delta2,2)$,}\\\text{or $(\frac{\delta+1}2,1)$}\end{array}\label{4.6.5'} \vspace{10pt}\\
&V^{\mathbb B}_{\goth f,\epsilon-1}\xrightarrow{\bmatrix \tau \endbmatrix}   V^{\mathbb B}_{\goth f-1,\epsilon-1},
&& \begin{array}{l}\textstyle \text{if $2\le \delta$ and $(\epsilon,\goth g)$ satisfy one of the following:}\\\textstyle\text{$\epsilon=\frac{\delta-1}2$ and $4\le\goth g$, or $\epsilon=\frac{\delta}2$ and $3\le\goth g$,}\\\textstyle\text{or $\epsilon=\frac{\delta+1}2$ and $2\le\goth g$.}\end{array}\label{4.6.6'}\end{align}\endgroup

The map $\xi:V_{0,\delta-1}^{\mathbb T^\epsilon}\to V_{2(\delta-\epsilon),\epsilon-1}^{\mathbb B}$, which appears in {\rm(\ref{4.6.3'})} and {\rm(\ref{4.6.4'})}, is given by 
\begin{equation}\label{4.6.1}\xi(\gamma_{\delta-1})=
(-1)^{\epsilon-1} \sum\limits_{m\in \binom{Y}{\delta-\epsilon}} [D(\mu)](m^*)\otimes m(\gamma_{\delta-1})\in V_{2(\delta-\epsilon),\epsilon-1}^{{\mathbb B}}\subseteq {\mathbb M^\epsilon}_{\delta-1};\end{equation}
furthermore, $$
2(\delta-\epsilon)=\begin{cases}\goth f,&\text{if 
$(\epsilon,\goth g)=(\frac{\delta-1}2,1)$, and}\\
 \goth f-1,&\text{if 
$(\epsilon,\goth g)=(\frac{\delta-1}2,2)$ or $(\frac{\delta}2,1)$.}\end{cases}$$
\end{observation}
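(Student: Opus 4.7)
The proof is essentially bookkeeping based on Definition~\ref{doodles'} and Remarks~\ref{num-info-Med}, and the plan is purely computational. The first step is to tabulate, in each of the six cases (\ref{4.6.1'})–(\ref{4.6.6'}), which summands of $\mathbb M^\epsilon$ sit in positions $N_{\max}$ and $N_{\max}-1$. A $\mathbb T^\epsilon$-summand $V_{I,J}^{\mathbb T^\epsilon}$ occupies position $I+2J-\delta+2$ subject to $\epsilon\le J$ and $I+J\le\delta-1$; a $\mathbb B$-summand $V_{i,j}^{\mathbb B}$ occupies position $i+2j-\delta+1$ subject to $j\le\epsilon-1$ and $\delta\le i+j$. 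Reading $N_{\max}$ from (\ref{last}), the only candidates that can land at $N_{\max}$ are $V_{0,\delta-1}^{\mathbb T^\epsilon}$ (which lies in position $\delta$) and $V_{\goth f,\epsilon-1}^{\mathbb B}$ (which lies in position $\goth g+2\epsilon-1$); the case split (\ref{4.6.1'})–(\ref{4.6.6'}) records precisely which of these equals $N_{\max}$. The same enumeration identifies the summands at $N_{\max}-1$: the only possibilities are $V_{1,\delta-2}^{\mathbb T^\epsilon}$ (present exactly when $\epsilon\le\delta-2$), $V_{0,\delta-1}^{\mathbb T^\epsilon}$ (when it lies at $N_{\max}-1$ rather than $N_{\max}$), and $V_{\goth f-1,\epsilon-1}^{\mathbb B}$.

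The second step is to apply Definition~\ref{doodles'}.\ref{doodles'.b} directly to each summand at $N_{\max}$. The differential on $V_{\goth f,\epsilon-1}^{\mathbb B}$ kills the $\Psi$-contribution because $\bigwedge^{\goth f+1}F=0$, leaving only the $\tau$-contribution into $V_{\goth f-1,\epsilon-1}^{\mathbb B}$; this handles (\ref{4.6.2'}), (\ref{4.6.6'}) and the $\tau$ column of (\ref{4.6.4'}), (\ref{4.6.5'}). The differential on $V_{0,\delta-1}^{\mathbb T^\epsilon}$ kills the $\tau$-contribution because $\bigwedge^{-1}F=0$, retains the $\Psi$-contribution into $V_{1,\delta-2}^{\mathbb T^\epsilon}$ (gated by the factor $\chi(\epsilon\le J-1)$, i.e., $\chi(3\le\delta)$ in (\ref{4.6.4'}) and always true in (\ref{4.6.3'})), and produces a sum of $\mathbb B$-terms indexed by $(i,j)$ with $i+2j=2(\delta-1)$ and $\delta\le i+j$. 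In this last sum, the binomial $\binom{J-1-j}{J-\epsilon}=\binom{\delta-2-j}{\delta-1-\epsilon}$ vanishes unless $j\le\epsilon-1$, while for every $j<\epsilon-1$ one has $i=2(\delta-1)-2j>2(\delta-\epsilon)\ge\goth f$ in the cases where the source sits at $N_{\max}$, so the target $V_{i,j}^{\mathbb B}$ is zero. Only $j=\epsilon-1$ survives, with $i=2(\delta-\epsilon)$ and $\binom{\delta-1-\epsilon}{\delta-1-\epsilon}=1$; together with the sign $(-1)^{0+j}=(-1)^{\epsilon-1}$, this reproduces (\ref{4.6.1}). Case (\ref{4.6.1'}) is the lone instance where $V_{0,\delta-1}^{\mathbb T^\epsilon}=V_{0,0}^{\mathbb T^\epsilon}$ sits at $N_{\max}=1$ and the target $\bigwedge^\goth fF\otimes\bigwedge^\goth gG$ is reached through the rule (\ref{blop}); there is nothing to check beyond unravelling definitions.

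Finally, in case (\ref{4.6.5'}) one must note that the apparent target $V_{0,\delta-1}^{\mathbb T^\epsilon}$ at position $N_{\max}-1$ receives the zero component from $V_{\goth f,\epsilon-1}^{\mathbb B}$, because by Definition~\ref{doodles'}.\ref{doodles'.b} a $\mathbb B$-summand maps only into other $\mathbb B$-summands; this accounts for the entry $0$ in the matrix $\bigl[\begin{smallmatrix}0\\\tau\end{smallmatrix}\bigr]$. The most delicate bookkeeping is in (\ref{4.6.4'}): one must separately verify that when $\delta=2$ (which forces $(\epsilon,\goth g)=(\frac{\delta}2,1)$), the would-be summand $V_{1,0}^{\mathbb T^\epsilon}$ fails the constraint $\epsilon\le J$ and is therefore absent, justifying the prefactor $\chi(3\le\delta)$. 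The identification $2(\delta-\epsilon)=\goth f$ in (\ref{4.6.3'}) and $2(\delta-\epsilon)=\goth f-1$ in (\ref{4.6.4'}) follows by substituting the given values of $(\epsilon,\goth g)$. I do not anticipate any genuine obstacle; the only real risk is a sign or indexing slip, which is mitigated by testing each claim against the small-parameter examples in Section~\ref{Med}.
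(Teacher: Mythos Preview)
Your approach is the same as the paper's: the authors simply observe that for $\delta=1$ the complex is already written out explicitly in Examples~\ref{exs-Med}.\ref{4.4.g}, and for $2\le\delta$ they point to Definition~\ref{doodles'}.\ref{doodles'.b} for the differential and to Remark~\ref{num-info-Med}.\ref{num-info-Med.a} for ${\mathbb M^\epsilon}_{N_{\max}}$, then say ``one easily calculates the summands of ${\mathbb M^\epsilon}_{N_{\max}-1}$'' and illustrate with a single case. You have filled in exactly that bookkeeping.

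One small slip to fix: in your analysis of the $\mathbb B$-components of $d(V_{0,\delta-1}^{\mathbb T^\epsilon})$ you write $2(\delta-\epsilon)\ge\goth f$, but in case~(\ref{4.6.4'}) one has $2(\delta-\epsilon)=\goth f-1$, not $\ge\goth f$. The conclusion is still correct, because $i=2(\delta-1-j)$ and $2(\delta-\epsilon)$ are both even, so $j<\epsilon-1$ forces $i\ge 2(\delta-\epsilon)+2=\goth f+1>\goth f$ and $V_{i,j}^{\mathbb B}=0$; just adjust the stated inequality.
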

\begin{proof} If $\delta=1$, then $\mathbb M^\epsilon$ is recorded in (\ref{*star}) and (\ref{**}). Henceforth, we assume $2\le \delta$. The differential $d_{N_{\max}}$ is given in Definition~\ref{doodles'}.\ref{doodles'.b}. We need only deal with the modules. The module ${\mathbb M^\epsilon}_{N_{\max}}$ is given in Remark~\ref{num-info-Med}.\ref{num-info-Med.a}. One easily calculates 
the 
summands of ${\mathbb M^\epsilon}_{N_{\max}-1}$.
 For example, if $\epsilon=\frac{\delta+1}2$
and $V_{i,j}^{\mathbb B}$ is a summand of ${\mathbb M^\epsilon}_{N_{\max}-1}={\mathbb M^\epsilon}_{\goth f-1}$, then
$$i+2j-\delta+1=\goth f-1 \quad \text{and}\quad  j\le \textstyle \epsilon -1=\frac{\delta+1}2-1;$$hence,
$$\goth f-2-i+\delta=2j\le \delta-1.$$
It follows that $\goth f-1\le i$. On the other hand, $f-i$ and $\delta$ have the same parity; so $i=\goth f-1$. 
A similar argument works in the remaining cases.
\end{proof}

\section{The double complexes ${\mathbb T^\epsilon}$ and ${\mathbb B}$.}\label{Top-and-Bot}
Adopt Data~{\rm{\ref{87data6}}} and 
{\rm{\ref{unm-data}}}.
The double complexes  ${\mathbb T^\epsilon}$ and ${\mathbb B}$, which are used in the construction of ${\mathbb M^\epsilon}$, both are quotients of the double complex ${\mathbb V}$. In Definition~\ref{87.11}, we introduce all three double complexes $\mathbb{V}$,  ${\mathbb T^\epsilon}$, and ${\mathbb B}$; we also introduce the sub-double-complex $\mathbb {U}$ of $\mathbb{V}$ with $\mathbb{V}/\mathbb{U}$ equal to ${\mathbb B}$. (We have no need for the subcomplex of $\mathbb{V}$ which defines ${\mathbb T^\epsilon}$.) Please keep in mind that, for our purposes, ${\mathbb T^\epsilon}$ and ${\mathbb B}$ are the important complexes. We have introduced $\mathbb{V}$ and $\mathbb{U}$ in order to calculate the homology of ${\mathbb B}$; see Lemma~\ref{87.15-bot}. (It  is not difficult to compute the homology of ${\mathbb T^\epsilon}$; see Lemma~\ref{87.15}.) 
\begin{definition}\label{87.11} Adopt Data~{\rm{\ref{87data6}}} and 
{\rm{\ref{unm-data}}}.
\begin{table}
\begin{center}
$$\mathbb{V}:\quad\xymatrix{&\vdots\ar[d]^\tau&\vdots\ar[d]^\tau&\vdots\ar[d]\\
\cdots\ar[r]^{\Psi\ \ \ \ }&V_{\delta,1}\ar[r]^{\Psi}\ar[d]^{\tau}&V_{\delta+1,0}\ar[r]\ar[d]^{\tau}&0\ar[d]\\
\cdots\ar[r]^{\Psi\ \ \ \ }&V_{\delta-1,1}\ar[r]^{\Psi}\ar[d]^{\tau}&V_{\delta,0}\ar[r]^{\bigwedge^{\goth g}\Psi\ \ \ \ }\ar[d]^{\tau}&\bigwedge^{\goth f}F\otimes \bigwedge^{\goth g}G\ar[d]^{\tau}\\
\cdots\ar[r]^{\Psi\ \ \ \ }&V_{\delta-2,1}\ar[r]^{\Psi}\ar[d]^{\tau}&V_{\delta-1,0}\ar[r]^{\bigwedge^{\goth g}\Psi\ \ \ \ }\ar[d]^{\tau}&\bigwedge^{\goth f-1}F\otimes \bigwedge^{\goth g}G\ar[d]^{\tau}\\
&\vdots\ar[d]^\tau&\vdots\ar[d]^\tau&\vdots\ar[d]^\tau\\
\cdots\ar[r]&V_{0,1}\ar[r]^\Psi\ar[d]&V_{1,0}\ar[r]^{\bigwedge^{\goth g}\Psi\ \ \ \ }\ar[d]^\tau&\bigwedge^{\goth g+1}F\otimes \bigwedge^{\goth g}G\ar[d]^\tau\\
\cdots\ar[r]&0\ar[r]&V_{0,0}\ar[r]^{\bigwedge^{\goth g}\Psi\ \ \ \ }&\bigwedge^{\goth g}F\otimes \bigwedge^{\goth g}G\\
}$$
\hrule
$${\mathbb T^{\epsilon}}:\quad \xymatrix{&\vdots\ar[d]^\tau&\vdots\ar[d]^\tau&\vdots\ar[d]^\tau\\
\dots\ar[r]&V_{0,\epsilon+2}\ar[r]^\Psi\ar[d]&V_{1,\epsilon+1}\ar[d]^\tau\ar[r]^\Psi&V_{2,\epsilon}\ar[d]^{\tau}\\
\dots\ar[r]&0\ar[r]\ar[d]&V_{0,\epsilon+1}\ar[r]^{\Psi}\ar[d]&V_{1,\epsilon}\ar[d]^\tau\\
\dots\ar[r]&0\ar[r]&0\ar[r]&V_{0,\epsilon},}$$
\caption{{ The double complexes $\mathbb{V}$ and ${\mathbb T^{\epsilon}}$ from Definition \ref{87.11}.}}\label{8br-of-X}
\end{center}\end{table}
\begin{table}
\begin{center}
$${\mathbb B}:\quad \xymatrix{
&\vdots\ar[d]^{\tau}&\vdots\ar[d]^{\tau}&\vdots\ar[d]^{\tau}&\vdots\ar[d]\\
\dots\ar[r]^{\Psi\ \ \ \ }&V_{\delta,2}\ar[r]^{\Psi}\ar[d]^{\tau}&V_{\delta+1,1}\ar[r]^{\Psi}\ar[d]^{\tau}&V_{\delta+2,0}\ar[r]\ar[d]^{\tau}&0\ar[d]\\
\dots\ar[r]^{\Psi\ \ \ \ }&V_{\delta-1,2}\ar[r]^{\Psi}\ar[d]^{\tau}&V_{\delta,1}\ar[r]^{\Psi}\ar[d]^{\tau}&V_{\delta+1,0}\ar[r]\ar[d]^{\tau}&0\ar[d]\\
\dots\ar[r]^{\Psi\ \ \ \ }&V_{\delta-2,2}\ar[r]^{\Psi}&V_{\delta-1,1}\ar[r]^{\Psi}&V_{\delta,0}\ar[r]&\bigwedge^{\goth f}F\otimes \bigwedge^{\goth g}G,}$$
\hrule
$$\mathbb{U}:\quad \xymatrix{\cdots\ar[r]^{\Psi\ \ \ \ }&V_{\delta-2,1}\ar[r]^{\Psi}\ar[d]^{\tau}&V_{\delta-1,0}\ar[r]^{\bigwedge^{\goth g}\Psi\ \ \ \ }\ar[d]^{\tau}&\bigwedge^{\goth f-1}F\otimes \bigwedge^{\goth g}G\ar[d]^{\tau}\\
&\vdots\ar[d]^\tau&\vdots\ar[d]^\tau&\vdots\ar[d]^\tau\\
\cdots\ar[r]&V_{0,1}\ar[r]^\Psi\ar[d]&V_{1,0}\ar[r]^{\bigwedge^{\goth g}\Psi\ \ \ \ }\ar[d]^\tau&\bigwedge^{\goth g+1}F\otimes \bigwedge^{\goth g}G\ar[d]^\tau\\
\cdots\ar[r]&0\ar[r]&V_{0,0}\ar[r]^{\bigwedge^{\goth g}\Psi\ \ \ \ }&\bigwedge^{\goth g}F\otimes \bigwedge^{\goth g}G
}$$
\caption{{ The double complexes ${\mathbb B}$ and $\mathbb{U}$ from Definition \ref{87.11}.}}\label{8br-of-X2}
\end{center}\end{table}
Define $\mathbb {V}$, ${\mathbb T^{\epsilon}}$, ${\mathbb B}$ and $\mathbb {U}$ to be the double complexes which are  given in Tables~\ref{8br-of-X} and \ref{8br-of-X2}.  
The double complexes  ${\mathbb T^{\epsilon}}$ and ${\mathbb B}$ are quotients of $\mathbb{V}$ and $\mathbb {U}$ is the subcomplex of $\mathbb {V}$ with $\mathbb{V}/\mathbb{U}={\mathbb B}$.
The modules are indexed in such a way that  $$\textstyle\operatorname{Tot}(\mathbb{V})_0=(\bigwedge^{\goth f}F\otimes \bigwedge^{\goth g}G)\oplus \bigoplus\limits_{i+2j=\delta-1}V_{i,j}.$$

\begin{enumerate} [\rm(a)]
\item The module $V_{i,j}$ is the free $R$-module  $\bigwedge^i F\otimes D_j(G^*)$, see \ref{V-bullet}.
\item \label{87.11.d}The $R$-module homomorphism $$\xymatrix{V_{i,j}\ar[d]^{\tau}\\V_{i-1,j}}
$$   sends $f_i\otimes \gamma_j$ to $\tau(f_i)\otimes\gamma_j$. (Recall the convention \ref{2.7}.) 
\item\label{87.11.e} The $R$-module homomorphism  $\xymatrix{V_{i,j}\ar[r]^{\Psi\ \ \ \ \ }&V_{i+1,j-1}}$  sends $f_i\otimes \gamma_j$ to
$\sum\limits_\ell \Psi(X_\ell)\wedge f_i\otimes Y_{\ell}(\gamma_j)$. (The element $\sum\limits_{\ell=1}^{\goth g} X_\ell\otimes Y_{\ell}$ of $G^*\otimes G$ is explained in \ref{87Not1}.) 
\item\label{87.11.f} The $R$-module homomorphism  $V_{i,0}\xrightarrow{\bigwedge^{\goth g}\Psi} \bigwedge^{\goth g+i}F\otimes \bigwedge^{\goth g}G$ 
is $$f_{i}\mapsto f_{i}\wedge ({\textstyle\bigwedge}^{\goth g}\Psi)(\omega_{G^*})\otimes \omega_G.$$ (The symbols $\omega_{G^*}$ and $\omega_G$ are explained in {\rm\ref{omega}}.) \end{enumerate}
\end{definition}

\begin{remarks}\begin{enumerate}[\rm(a)]\item We always use $f_i$ for an arbitrary element of $\bigwedge^iF$ and $\gamma_j$ for an arbitrary element of $D_j(G^*)$; see \ref{2.7}.
\item Recall from the definition of $\Psi$ in Data~\ref{87data6} that $\tau\circ \Psi=0$. Observe that  the squares 
$$\xymatrix{V_{i,j}\ar[r]\ar[d]&V_{i+1,j-1}\ar[d]\\V_{i-1,j}\ar[r]&V_{i,j-1}}$$ from the complexes of Definition~\ref{87.11} anti-commute  and the squares 
$$\xymatrix{V_{i,0}\ar[r]\ar[d]&\bigwedge^{i+\goth g}F\otimes \bigwedge^{\goth g}G\ar[d]\\V_{i-1,0}\ar[r]&\bigwedge^{i-1+\goth g}F\otimes \bigwedge^{\goth g}G}$$
commute on-the-nose.

\item Let $N$ be an integer.
\begin{enumerate}[\rm(i)]
\item The module $\operatorname{Tot}(\mathbb{V})_N$ is equal to
$$\begin{cases} 
 (\textstyle{\bigwedge}^{\goth f+N}F\otimes \bigwedge^gG)
\oplus \sum\limits_{(\text{\ref{87dis3}})}V_{i,j},
&\text{if $-\delta\le N$, and}\\
0,&\text{if $N<-\delta$,}
\end{cases}$$
where the sum is taken over
\begin{equation}\label{87dis3} \left\{(i,j)\mid i+2j-\delta+1=N,\quad \text{and}\quad0\le i, j\right\}.\end{equation}

\item The module $\operatorname{Tot}({\mathbb T^{\epsilon}})_N$ is equal to
$$\begin{cases} 0,&\text{if $N\le 0$, and}\\
\sum\limits_{(\text{\ref{87dis1}})}V_{i,j},&\text{if $1\le N$,}
\end{cases}$$where the sum is taken over 
\begin{equation}\label{87dis1}\{(i,j)\mid i+2j-\delta+1=N,\quad 0\le i,\quad\text{and}\quad \epsilon\le j\}.\end{equation}

\item The module $\operatorname{Tot}({\mathbb B})_N$ is equal to
$$\begin{cases} 
0,&\text{if $N\le -1$,}\\
\bigwedge^{\goth f}F\otimes\bigwedge^{\goth g}G,&\text{if $N=0$, and}\\
\sum\limits_{(\text{\ref{87dis2}})}V_{i,j},&\text{if $1\le N$,}
\end{cases}$$
where the sum is taken over
\begin{equation}\label{87dis2} \left\{(i,j)\mid i+2j-\delta+1=N,\quad \delta\le i+j,\quad \text{and}\quad0\le i, j\right\}.\end{equation}

\item   The module $\operatorname{Tot}(\mathbb{U})_N$ is equal to
$$\begin{cases} 
\sum\limits_{(\text{\ref{87dis4}})}V_{i,j},
&\text{if $0\le N$,}\\
 (\textstyle{\bigwedge}^{\goth f+N}F\otimes \bigwedge^gG)
\oplus \sum\limits_{(\text{\ref{87dis4}})}V_{i,j},
&\text{if $-\delta \le N\le -1$, and}\\
0,&\text{if $N<-\delta$.}
\end{cases}$$
where the sum is taken over
\begin{equation}\label{87dis4} \left\{(i,j)\mid i+2j-\delta+1=N,\quad i+j\le\delta-1\quad \text{and}\quad0\le i, j\right\}.\end{equation}
\end{enumerate}
\end{enumerate}
\end{remarks}

\begin{lemma}\label{87.15}Adopt Data~{\rm\ref{87data6}}. Assume Hypothesis~{\rm\ref{87hyp6}.\ref{87hyp6-b}} holds.    Recall the double complexes ${\mathbb T}^{\epsilon}$  and $\mathbb{V}$ of Definition~{\rm \ref{87.11}}. \begin{enumerate}[\rm(a)] \item\label{87.15.a}
If $N$ is  an  integer, then 
$$\operatorname{H}_N(\operatorname{Tot}({\mathbb T^{\epsilon}}))\simeq \begin{cases} \frac R{I_1(\tau)} \otimes D_{\frac{\delta+N-1}2}(G^*),&\text{if $\delta+N-1$ is even and $2\epsilon-\delta+1\le N$, and}\\
0,&\text{otherwise}.\end{cases}$$
Furthermore, if $\delta+N-1$ is even and $2\epsilon-\delta+1\le N$, then 
\begin{equation}\label{8basis}\textstyle\{ \llbracket z_{m^*}\rrbracket \mid m\in \binom{Y}{\frac{\delta+N-1}2}\}\end{equation} is a basis for the   
 free $R/I_1(\tau)$-module $\operatorname{H}_N(\operatorname{Tot}({\mathbb T^{\epsilon}}))$. $($See Remark~{\rm\ref{8.16}} and {\rm(\ref{rep})}.$)$

\item\label{87.21.d} If $1\le \delta$ and $1\le N$, then$$\operatorname{H}_N(\operatorname{Tot}(\mathbb{V}))\simeq\begin{cases}
R/I_1(\tau)\otimes D_{\frac{\delta+N-1}2}(G^*),&\text{if $\delta+N-1$ is even, and}\\
0,&\text{otherwise.}\end{cases}
$$ Furthermore, if $\delta+N-1$ is even and $1\le N$, then 
\begin{equation}\label{8basis.b}\textstyle\{ \llbracket Z_{m^*}\rrbracket \mid m\in \binom{Y}{\frac{\delta+N-1}2}\}\end{equation} is a basis for the   
 free $R/I_1(\tau)$-module $\operatorname{H}_N(\operatorname{Tot}(\mathbb{V}))$. $($See Remark~{\rm\ref{8.16}} and {\rm(\ref{rep})}.$)$
\end{enumerate}
 \end{lemma}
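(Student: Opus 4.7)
The plan is to prove both parts by analyzing the column-filtration spectral sequence of the double complex, with Hypothesis~\ref{87hyp6}(\ref{87hyp6-b}) as the essential input. For any fixed $J$, the $J$-th column of $\mathbb{T}^{\epsilon}$ (or $\mathbb V$) is
\[
\cdots\xrightarrow{\tau}V_{2,J}\xrightarrow{\tau}V_{1,J}\xrightarrow{\tau}V_{0,J},
\]
i.e., the Koszul complex on $T_1,\ldots,T_{\goth f}$ tensored with the free $R$-module $D_J(G^*)$. Since $T_1,\ldots,T_{\goth f}$ is an $R$-regular sequence, this column is acyclic in positive homological degree and has $\operatorname{H}_0 = R/I_1(\tau)\otimes D_J(G^*)$.

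Consequently, on the $E^1$-page only the row $I=0$ survives, with $E^1_{0,J}=R/I_1(\tau)\otimes D_J(G^*)$. The differential $d^1$ is induced by $\Psi\colon V_{0,J}\to V_{1,J-1}$, hence maps $E^1_{0,J}$ into $E^1_{1,J-1}$, which is already zero on $E^1$ (by Koszul acyclicity for $\mathbb V$ and for $\mathbb{T}^{\epsilon}$ when $J-1\ge\epsilon$; for $\mathbb{T}^{\epsilon}$ with $J=\epsilon$ the target simply is not in the subcomplex). All higher differentials likewise land in rows of positive index, so $E^1=E^{\infty}$. Under the indexing of Definition~\ref{87.11}, $V_{I,J}$ lies in $\operatorname{Tot}(\cdot)_N$ precisely when $I+2J-\delta+1=N$, so a surviving class at $(0,J)$ contributes to $\operatorname{H}_N$ exactly when $2J-\delta+1=N$, which forces $\delta+N-1$ to be even and $J=\tfrac{\delta+N-1}{2}$. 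For $\mathbb{T}^{\epsilon}$ the restriction $J\ge\epsilon$ gives the bound $2\epsilon-\delta+1\le N$ in (\ref{87.15.a}); for $\mathbb V$, the hypothesis $N\ge 1$ ensures that the rightmost column involving $\bigwedge^{\goth f+N}F\otimes\bigwedge^{\goth g}G$ does not contribute to the relevant total degree, yielding (\ref{87.21.d}).

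To establish the explicit bases in (\ref{8basis}) and (\ref{8basis.b}), I will exhibit, for each monomial $m\in\binom{Y}{(\delta+N-1)/2}$, an explicit cycle $z_{m^*}\in\operatorname{Tot}(\mathbb{T}^{\epsilon})_N$ (respectively $Z_{m^*}\in\operatorname{Tot}(\mathbb V)_N$) whose image in $E^{\infty}_{0,(\delta+N-1)/2}$ is $1\otimes m^*$. These are the cycles to be defined in Remark~\ref{8.16}: start from $m^*\in V_{0,(\delta+N-1)/2}$, apply $\Psi$, lift back along $\tau$ using the Koszul acyclicity of the column, and iterate to obtain a closed zig-zag. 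The main obstacle is verifying that this zig-zag actually closes up to a cycle; this is the combinatorial heart of the calculation and rests on the DG-$\Gamma$ identity relating $\tau$, the divided powers of $\mu$, and the wedge structure on $\bigwedge^{\bullet}F$ recorded in Section~\ref{not-con}. Once this is in place, since $\{\,m^*\mid m\in\binom{Y}{(\delta+N-1)/2}\,\}$ is a free $R$-basis for $D_{(\delta+N-1)/2}(G^*)$, the classes $\llbracket z_{m^*}\rrbracket$ form the asserted free $R/I_1(\tau)$-basis of $\operatorname{H}_N$, and the analogous argument gives the basis in (b).
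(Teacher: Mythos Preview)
Your approach is correct and is essentially the same as the paper's, only phrased in spectral-sequence language rather than as an iterated mapping-cone argument. The paper builds $\operatorname{Tot}(\mathbb T^{\epsilon})$ one column at a time, using the long exact sequence of the pair $(\mathbb P,\mathbb A)$ where $\mathbb A$ collects columns with index $\le j_0-1$ and $\mathbb P$ adjoins column $j_0$; since each column is a Koszul complex tensored with a free module, the induction step is immediate. Your column-filtration spectral sequence is precisely the packaged version of this, and your observation that $E^1=E^{\infty}$ (because the surviving classes all sit at $I=0$ while every $d^r$ raises $I$) is exactly the degeneration that the mapping-cone staircase expresses. For part~(b) your remark that the extra right-hand column of $\mathbb V$ lives in total degrees $\le 0$ is the same point the paper makes when it says the staircase ``starts with the second column instead of the first.''

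One small refinement: the paper does not leave the explicit cycles as an abstract zig-zag but writes down a closed formula (their $Z_\gamma$, $z^{\epsilon}_\gamma$) using the divided-power map $D(\mu)$, and then checks directly that this is a cycle via the identity $\tau\bigl([D(\mu)](\gamma')\bigr)=\sum_\ell \Psi(X_\ell)\wedge [D(\mu)](Y_\ell(\gamma'))$. Your sketch (``apply $\Psi$, lift back along $\tau$, iterate'') would produce the same element, but since these cycles are used later in the paper to prove that $\xi^{\epsilon}$ is a quasi-isomorphism, having the closed formula in hand is what actually matters downstream.
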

\begin{remarks}\label{8.16} (a) We explain the notation in (\ref{8basis}) and (\ref{8basis.b}). For each element $\gamma$  of $D_{\frac{\delta+N-1}2}(G^*)$,  cycles $z^\epsilon_{\gamma}$ in $\operatorname{Tot}({\mathbb T^{\epsilon}})_{\frac{\delta+N-1}2}$ and $Z_{\gamma}$ in $\operatorname{Tot}(\mathbb{V})_{\frac{\delta+N-1}2}$ are defined in Lemma~\ref{8z-gamma}. In particular, the homology class $\llbracket z_{m^*}\rrbracket$ of  (\ref{8basis}) refers to the homology class of the cycle $z_{m^*}$ in $\operatorname{Tot}({\mathbb T^{\epsilon}})_{\frac{\delta+N-1}2}$ which corresponds to the element $m^*$ of $D_{\frac{\delta+N-1}2}(G^*)$ as described in Lemma~\ref{8z-gamma}. 

\medskip\noindent (b) A less technical statement of Lemma~\ref{87.15}.\ref{87.15.a} is that the non-zero homology of $\operatorname{Tot}({\mathbb T^{\epsilon}})$ is ``caused by'' the external corners $V_{0,j_0}$ in ${\mathbb T^{\epsilon}}$. Furthermore, the corner $V_{0,j_0}$ contributes $$R/I_1(\tau)\otimes D_{j_0}(G^*)$$ to $\operatorname{H}_{2j_0-\delta+1}(\operatorname{Tot}({\mathbb T^{\epsilon}}))$. We use (\ref{87dis1}) to read that $V_{0,j_0}$ sits in position $2j_0-\delta+1$ of $\operatorname{Tot}({\mathbb T^{\epsilon}})$. Also, it is immediate that  if $N$ is $2j_0-\delta+1$, then $j_0=(N+\delta-1)/2$.
\end{remarks}

\begin{proof2} (\ref{87.15.a}) Each column of ${\mathbb T^{\epsilon}}$ is the tensor product of a free $R$-module with the Koszul complex associated to a regular sequence. Consequently, the homology of each column of ${\mathbb T^{\epsilon}}$ is well understood. We view $\operatorname{Tot}({\mathbb T^{\epsilon}})$ as the limit of a sequence of mapping cones; each mapping cone adjoins one more column from ${\mathbb T^{\epsilon}}$. The columns are arranged in such a way that the homology of the new column does not interfere with the homology of the total complex of the previous columns. We provide some details. Let $j_0$ be an integer with $\epsilon\le j_0$; and $\mathbb A$ and $\mathbb P$  be the subcomplexes $$\mathbb {A}=  \operatorname{Tot}(\bigoplus_{}V_{*,\le j_0-1})
\quad\text{and}\quad 
\mathbb {P}=  \operatorname{Tot}(\bigoplus_{}V_{*,\le j_0})
$$ of $\operatorname{Tot}({\mathbb T^{\epsilon}})$.
We assume by induction that $$\text{$\operatorname{H}_N(\mathbb A)\neq 0$ 
only if $\delta+N-1$ is even, and $2\epsilon-\delta+1\le N \le 2j_0-\delta-1$.}$$
We see that $\mathbb{P}/\mathbb{A}$ is the column $\bigoplus_{i}V_{i,j_0}$ of ${\mathbb T^\epsilon}$; thus $\mathbb{P}/\mathbb{A}$ is the tensor product of the Koszul complex associated to $\tau$ with $D_{j_0}(G^*)$. Hypothesis~{\rm\ref{87hyp6}.\ref{87hyp6-b}} ensures that 
$$\operatorname{H}_N(\mathbb{P}/\mathbb{A})\simeq \begin{cases}
R/I_1(\tau)\otimes D_{j_0}(G^*),&\text{if $N=2j_0-\delta+1$, and}\\
0,&\text{otherwise.}
\end{cases}$$
The long exact sequence of homology which is induced by the short exact sequence of complexes
$$0\to \mathbb{A}\to \mathbb{P}\to \mathbb{P}/\mathbb{A}\to 0$$
yields that 
$$\operatorname{H}_N(\mathbb{P})\simeq \begin{cases} 
0,&\text{if $2j_0-\delta+2\le N$},\\ 
R/I_1(\tau)\otimes D_{j_0}(G^*),&\text{if $N=2j_0-\delta+1$},\\ 
\operatorname{H}_N(\mathbb{A}),&\text{if $N\le 2j_0-\delta$}.\end{cases}$$
Furthermore, the cycles that represent $\operatorname{H}_N(\mathbb{A})$ continue to represent $\operatorname{H}_N(\mathbb{P})$ for $N\le 2j_0-\delta$. Also, it is not difficult to identify a basis for the free $R/I_1(\tau)$-module  $\operatorname{H}_{2j_0-\delta+1}(\mathbb{P})$. One takes a basis for $D_{j_0}(G^*)$.  Each element of this basis is a cycle in the complex  $\mathbb{P}/\mathbb{A}$. One lifts the cycle in $\mathbb{P}/\mathbb{A}$ to a cycle in $\mathbb{P}$ and then takes the homology class. In \ref{87Not1}, we identified the basis  $\{m^*\mid m\in \binom{Y}{j_0}\}$ for $D_{j_0}(G^*)$. In Lemma~\ref{8z-gamma} we lift $m^*$ to the cycle $z_{m^*}$ in $\mathbb P\subseteq \operatorname{Tot}({\mathbb T^{\epsilon}})$.

\medskip\noindent(\ref{87.21.d}) The proof is similar to the proof of (\ref{87.15.a}). 
The only difference is that the stair-case pattern involving exterior lower left-hand corners starts with the second column instead of the first column. That is, $\operatorname{H}_{-\delta+1}(\operatorname{Tot}(\mathbb{V}))$ and $\operatorname{H}_{-\delta}(\operatorname{Tot}(\mathbb{V}))$ are not calculated using the method of 
the proof of (\ref{87.15.a}). On the other hand, the statement of the result makes no claim about these homologies because  $-\delta<-\delta+1\le 0<N$. Once $1\le N$, then 
the module $\operatorname{H}_N(\operatorname{Tot}(\mathbb{V}))$ is non-zero if and only if $\operatorname{Tot}(\mathbb{V})_N$ contains one of the corners $V_{0,j_0}$ of $\mathbb{V}$.  Furthermore, (\ref{87dis3}) shows that  $V_{0,j_0}$ is a summand of $\operatorname{Tot}(\mathbb{V})_N$ if and only if $2j_0-\delta+1=N$. The rest of the proof is identical to the proof of (\ref{87.15.a}).
 \end{proof2}

\begin{lemma}\label{8z-gamma}Adopt Data~{\rm\ref{87data6}}  and  recall the double complexes $\mathbb{V}$, ${\mathbb T^{\epsilon}}$, and ${\mathbb B}$ of Definition~{\rm \ref{87.11}}.
Let $N$ be a positive integer with  $\delta+N-1$ even and let $\gamma$ be an element of $D_{\frac{\delta+N-1}2}(G^*)$. Define $Z_{\gamma}$ to be the following element of $\mathbb{V}:$
\begin{equation}\label{Z-gamma}
Z_{\gamma}=\sum\limits_{j=0}^{\frac{\delta+N-1}2}\sum\limits_{m\in \binom{Y}j}(-1)^{j}
[D(\mu)](m(\gamma))\otimes m^*\in V_{\delta+N-1-2j,j}\subset \mathbb{V}\end{equation}and define $z^\epsilon_{\gamma}$ and $\zeta_{\gamma}$ to be the images of $Z_{\gamma}$ in the quotient  double complexes ${\mathbb T^{\epsilon}}$ and ${\mathbb B}$, respectively.
Then 
\begin{enumerate}[\rm(a)]
\item $Z_\gamma$, $z^\epsilon_{\gamma}$, and $\zeta_{\gamma}$ are elements $\operatorname{Tot}(\mathbb {V})_{N}$, $\operatorname{Tot}({\mathbb T^{\epsilon}})_{N}$, and $\operatorname{Tot}({\mathbb B})_{N}$, respectively,
\item $Z_\gamma$, $z^\epsilon_{\gamma}$, and $\zeta_{\gamma}$ are cycles in $\operatorname{Tot}(\mathbb {V})$, $\operatorname{Tot}({\mathbb T^{\epsilon}})$, and $\operatorname{Tot}({\mathbb B})$, respectively,
\item $z^\epsilon_{\gamma}=\sum\limits_{J=\epsilon}^{\frac{\delta+N-1}2}\sum\limits_{m_1\in \binom{Y}{\frac{\delta+N-1}2-J}}(-1)^{J}
[D(\mu)](m_1^*)\otimes m_1(\gamma)\in V_{\delta+N-1-2J,J}\subseteq \operatorname{Tot}({\mathbb T^{\epsilon}})_N,\text{ and}$
\item $\zeta_{\gamma}=\sum\limits_{j=0}^{N-1}\sum\limits_{m_1\in \binom{Y}{\frac{\delta+N-1}2-j}}(-1)^{j}
[D(\mu)](m_1^*)\otimes m_1(\gamma)\in V_{\delta+N-1-2j,j}\subseteq \operatorname{Tot}({\mathbb B})_N$.

\end{enumerate}
\end{lemma}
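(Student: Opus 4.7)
The plan is to treat the four assertions separately: (a) is an immediate bookkeeping check, (b) is a direct differential computation turning on the fundamental derivation identity relating $\tau$, $D(\mu)$, and $\Psi$, and (c), (d) are obtained by restricting $Z_\gamma$ to the appropriate quotient and rewriting via the coproduct on $D_\bullet(G^*)$.

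For (a), I would observe that the summand $[D(\mu)](m(\gamma))\otimes m^*$ with $m\in \binom{Y}{j}$ lies in $\bigwedge^{2((\delta+N-1)/2-j)}F\otimes D_j(G^*)=V_{\delta+N-1-2j,\,j}$, which sits in position $(\delta+N-1-2j)+2j-\delta+1=N$ of $\operatorname{Tot}(\mathbb{V})$; $z^\epsilon_\gamma$ and $\zeta_\gamma$ are the images of $Z_\gamma$ in the quotient double complexes, so they automatically sit in position $N$ of the respective total complexes. For (c) and (d), the image of $Z_\gamma$ in $\mathbb{T}^\epsilon$ retains the summands with $j\ge \epsilon$ while the image in $\mathbb{B}$ retains those with $i+j\ge \delta$, i.e., with $j\le N-1$. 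To match the stated formulas one invokes the coproduct identity on $D_\bullet(G^*)$: for $\gamma\in D_B(G^*)$ with $B=(\delta+N-1)/2$ and $A+j=B$, both sums
$$\sum_{m\in \binom{Y}{j}}m(\gamma)\otimes m^*\quad\text{and}\quad \sum_{m_1\in \binom{Y}{A}}m_1^*\otimes m_1(\gamma)$$
equal the $(A,j)$-component of $\Delta(\gamma)$ in $D_A(G^*)\otimes D_j(G^*)$; applying $[D(\mu)]\otimes \operatorname{id}$ to the first tensor factor and reindexing $J=(\delta+N-1)/2-j$ produces exactly the formulas in (c) and (d).

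For the cycle assertion (b) away from the corner, the key input is
$$\tau\bigl([D(\mu)](\gamma_A)\bigr)=\sum_{\ell=1}^{\goth g}\Psi(X_\ell)\wedge [D(\mu)]\bigl(Y_\ell(\gamma_A)\bigr),\qquad \gamma_A\in D_A(G^*),$$
which comes from the DG$\Gamma$-module structure of $\bigwedge^{2\bullet}F$ over $\bigwedge^{\bullet}F^*$ together with the defining relation $\Psi(X_\ell)=\tau(\mu(X_\ell))$. The $\tau$-contribution of the $j$-th summand of $Z_\gamma$ and the $\Psi$-contribution of the $(j{+}1)$-th summand both land in $V_{\delta+N-2-2j,\,j}$; under the substitution $m'=Y_\ell m$ (so that $Y_\ell(m'^*)=m^*$ and $m'(\gamma)=Y_\ell m(\gamma)$) the two expressions agree, and the signs $(-1)^j$ and $(-1)^{j+1}$ force cancellation. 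The same cancellation passes to the quotients $\mathbb{T}^\epsilon$ and $\mathbb{B}$, because the truncations only kill maps whose targets have already been suppressed.

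The hardest step will be the corner map $\bigwedge^{\goth g}\Psi\colon V_{\delta+N-1,\,0}\to \bigwedge^{\goth f+N-1}F\otimes \bigwedge^{\goth g}G$ applied to the $j=0$ summand, which matters only when the target is nonzero, i.e., $N=1$ (forcing $\delta$ even). In that case I would need to establish
$$[D(\mu)](\gamma)\wedge \Psi(X_1)\wedge \cdots\wedge \Psi(X_{\goth g})=0\text{ in }\textstyle\bigwedge^{\goth f}F.$$
The starting point is the dimensional vanishing
$$[D(\mu)](\gamma)\wedge \mu(X_1)\wedge\cdots\wedge \mu(X_{\goth g})=[D(\mu)](X_1\cdots X_{\goth g}\cdot \gamma)\in \textstyle\bigwedge^{\delta+2{\goth g}}F=0,$$
using that $D(\mu)$ is a map of divided power algebras and that $\delta+2{\goth g}>{\goth f}$; the $\Psi$-version is then extracted by iterated application of the derivation formula for $\tau$ combined with $\tau(\Psi(X_\ell))=\tau^2(\mu(X_\ell))=0$, exchanging each factor $\mu(X_\ell)$ for $\Psi(X_\ell)$.
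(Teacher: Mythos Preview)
Your proposal is correct and essentially identical to the paper's proof: the same bookkeeping for (a), the same coproduct identity on $D_\bullet(G^*)$ for (c) and (d), and the same cancellation argument for (b) based on the derivation identity $\tau([D(\mu)](\gamma_A))=\sum_\ell \Psi(X_\ell)\wedge[D(\mu)](Y_\ell(\gamma_A))$ together with the reindexing $m'\leftrightarrow Y_\ell m$ (the paper's ``tricks'' (\ref{trick1}) and (\ref{trick2})). One small difference: you explicitly treat the $N=1$ corner contribution through the map $\textstyle\bigwedge^{\goth g}\Psi$, showing $[D(\mu)](\gamma)\wedge(\bigwedge^{\goth g}\Psi)(\omega_{G^*})=0$; the paper's proof of this lemma silently omits that component (its description of $\operatorname{Tot}(\mathbb V)_{N-1}$ drops the summand $\bigwedge^{\goth f}F\otimes\bigwedge^{\goth g}G$) and only supplies the argument later, inside the proof of Lemma~\ref{MOC}, via the observation that the expression lies in $\tau(\bigwedge^{\goth f+1}F)=0$. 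Your inductive exchange of each $\mu(X_\ell)$ for $\Psi(X_\ell)$ works, though the paper's one-line $\tau(\bigwedge^{\goth f+1}F)=0$ argument is slicker.
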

\begin{proof} ${}$

\noindent(a)  Observe  that  $[D(\mu)](m(\gamma))\otimes m^*$, in the notation of (\ref{Z-gamma}), is an element of $V_{\delta+N-1-2j,j}$, which, according to (\ref{87dis3}), is a summand of $\operatorname{Tot}(\mathbb{V})_N$. 

\medskip\noindent(c,d) According to (\ref{87dis1}) and (\ref{87dis2}), $V_{\delta+N-1-2j,j}$ is a non-zero summand of ${\mathbb T^{\epsilon}}$ if and only if $\epsilon\le j$; and $V_{\delta+N-1-2j,j}$ is a non-zero summand of ${\mathbb B}$ if and only if $j\le N-1$. Observe that 
$$\sum\limits_{m\in \binom{Y}J}m(\gamma)\otimes m^*=
\sum\limits_{m_1\in \binom{Y}{\frac{\delta+N-1}2-J}} m_1^*\otimes m_1(\gamma)
\in D_{\frac{\delta+N-1}2-J}(G^*)\otimes D_{J}(G^*).$$

 \medskip\noindent(b) It suffices to show that $Z_\gamma$ is a cycle in $\operatorname{Tot}(\mathbb{V})$. The differential $d$ of $\operatorname{Tot}(\mathbb{V})$ takes $\operatorname{Tot}(\mathbb{V})_N$ to $$\operatorname{Tot}(\mathbb{V})_{N-1}=
\sum\limits_{j=0}^{\frac{\delta+N-3}2}V_{\delta+N-2-2j,j}.$$ Fix $j_0$ with  $0\le j_0\le \frac{\delta+N-3}2$. Observe that the component of $d(Z_\gamma)$ in $V_{\delta+N-2-2j_0,j_0}$ is $A+B$, where $A$ is the horizontal contribution and $B$ is the vertical contribution. In other words, 
$$\begin{cases}A=\sum\limits_{\ell=1}^{\goth g} \sum\limits_{m\in \binom{Y}{j_0+1}}(-1)^{j_0+1}
\Psi(X_\ell)\wedge [D(\mu)](m(\gamma))\otimes Y_{\ell}(m^*) \quad\text{and}\\ B= \sum\limits_{m\in \binom{Y}{j_0}}(-1)^{j_0}
\tau([D(\mu)](m(\gamma)))\otimes m^*.\end{cases}$$
The elements
\begin{equation}\label{trick1}\sum\limits_{\ell=1}^{\goth g}\sum\limits_{m\in \binom{Y}{j_0+1}}X_\ell\otimes m\otimes Y_{\ell}(m^*)\quad\text{ and }\quad
\sum\limits_{\ell=1}^{\goth g}\sum\limits_{m\in \binom{Y}{j_0}}X_\ell\otimes Y_{\ell}m\otimes m^*\end{equation}
of $G^*\otimes \operatorname{Sym}_{j_0+1}G\otimes D_{j_0}(G^*)$ are equal; so,
$$A=\sum\limits_{\ell=1}^{\goth g} \sum\limits_{m\in \binom{Y}{j_0}}(-1)^{j_0+1}
\Psi(X_\ell)\wedge [D(\mu)]((Y_{\ell}m)(\gamma))\otimes m^*.$$ If $\gamma'$ is an arbitrary element of $D_\bullet(G^*)$, then
\begin{equation}\label{trick2}\tau([D(\mu)](\gamma'))= \sum\limits_{\ell=1}^{\goth g} \tau(\mu(X_{\ell}))\wedge [D(\mu)](Y_\ell(\gamma')).\end{equation} (It suffices to test this assertion when $\gamma'=X_1^{(a_1)}X_2^{(a_2)}\cdots X_\ell^{(a_\ell)}$.) It follows that
$$A=\sum\limits_{m\in \binom{Y}{j_0}}(-1)^{j_0+1}
\tau( [D(\mu)](m(\gamma)))\otimes m^*=-B.$$ 
\end{proof}

\begin{proposition}\label{87.21} Adopt Data~{\rm\ref{87data6}}. Assume   Hypotheses~{\rm\ref{87hyp6}} holds. Recall the double complex $\mathbb{U}$ of Definition~{\rm\ref{87.11}}. 
Then  $\operatorname{H}_i(\operatorname{Tot}(\mathbb{U}))=0$, for $0\le i$.
\end{proposition}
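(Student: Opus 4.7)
The plan is to run the spectral sequence of the double complex $\mathbb{U}$ in which row homology (the $\Psi$-direction) is taken first, and show that the $E^1$-page vanishes in all non-negative total degrees. Group the modules of $\mathbb{U}$ by the invariant $s = i + j$. For $0 \le s \le \delta - 1$, the row at $s$ of $\mathbb{U}$ is the complex
$$\mathcal{E}_s:\quad V_{0,s} \xrightarrow{\Psi} V_{1,s-1} \xrightarrow{\Psi} \cdots \xrightarrow{\Psi} V_{s,0} \xrightarrow{\bigwedge^{\goth g}\Psi} \textstyle\bigwedge^{\goth g+s}F \otimes \bigwedge^{\goth g}G,$$
a generalized Eagon--Northcott-type complex of length $s+1$ attached to $\Psi\colon G^* \to F$, consisting of the divided-power Koszul piece on $\Psi$ capped off by wedging with $\bigwedge^{\goth g}\Psi(\omega_{G^*})$.

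The key claim is that each $\mathcal{E}_s$ is acyclic in all positions save its rightmost. Since Hypothesis~\ref{87hyp6}(a) gives $\operatorname{grade} I_{\goth g}(\Psi) \ge \delta \ge s + 1 = \operatorname{length}(\mathcal{E}_s)$, this follows either from the Buchsbaum--Eisenbud acyclicity criterion applied directly to $\mathcal{E}_s$ (by checking rank conditions on the $\Psi$-induced differentials and grade conditions on the ideals of their minors) or from recognizing $\mathcal{E}_s$ as a truncation of a generalized Eagon--Northcott complex whose acyclicity under this grade hypothesis is classical. Hence only the cokernel at the terminal module $\bigwedge^{\goth g+s}F \otimes \bigwedge^{\goth g}G$ survives.

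Once row-acyclicity is in hand, I track the total-complex positions. Under the convention $V_{i,j} \mapsto i + 2j - \delta + 1$ and $\bigwedge^{\goth g+k}F \otimes \bigwedge^{\goth g}G \mapsto k - \delta$, the terminal module of $\mathcal{E}_s$ lies in total degree $s - \delta$. As $s$ ranges over $\{0, 1, \dots, \delta - 1\}$, these terminal positions occupy exactly $\{-\delta, -\delta+1, \dots, -1\}$, all strictly negative. Consequently the $E^1$-page (and so every subsequent page, including $E^\infty$) is supported in strictly negative total degrees. Since the spectral sequence converges to $\operatorname{H}_*(\operatorname{Tot}(\mathbb{U}))$, we conclude $\operatorname{H}_i(\operatorname{Tot}(\mathbb{U})) = 0$ for all $i \ge 0$.

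The main obstacle is the row-acyclicity assertion. Because the grade hypothesis on $I_{\goth g}(\Psi)$ is exactly one short of the maximal possible, the length of each $\mathcal{E}_s$ must be controlled very precisely, and acyclicity is most subtle near the ``jump'' from $\bigwedge^s F$ to $\bigwedge^{\goth g+s}F \otimes \bigwedge^{\goth g}G$; verifying it requires careful bookkeeping of ranks and minor-ideal grades at this junction (alternatively, identifying $\mathcal{E}_s$ explicitly as a member of the Eagon--Northcott family sidesteps these computations at the cost of additional setup).
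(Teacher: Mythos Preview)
Your proposal is correct and follows essentially the same approach as the paper's own proof. Both arguments compute the homology of the rows of $\mathbb{U}$ first (the $\Psi$-direction), identify each row as a truncation of a generalized Eagon--Northcott complex attached to $\Psi$, invoke depth-sensitivity together with Hypothesis~\ref{87hyp6}.\ref{87hyp6-a} to conclude that each row is acyclic away from its rightmost module, and then observe that those rightmost modules all sit in strictly negative total degree; the paper phrases the last step as ``standard results about total complexes'' where you say ``spectral sequence,'' but this is the same mechanism. The paper is slightly more explicit in naming the top row as the truncation of the Buchsbaum--Rim complex $\mathcal C^1_{\Psi^*}$ and in noting that this longest row (length $\delta$) is where the full force of $\operatorname{grade} I_{\goth g}(\Psi)\ge \delta$ is used, with the shorter rows requiring less.
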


\begin{proof} 
 Each row of  $\mathbb {U}$ is isomorphic to a truncation of a generalized Eagon-Northcott complex. Recall that, for each $R$-module homomorphism $$\Phi: G^*\to F$$(or equivalently ${\Phi^*:F^*\to G}$), there is a family of generalized Eagon-Northcott complexes $\{\mathcal C_{\Phi^*}^{i}\}$ (see, for example, \cite[Appendix A.2]{Ei95}). If \begin{equation}\label{8acyclic-range}-1\le i\le \delta+1,\end{equation} then  $\mathcal C_{\Phi^*}^{i}$ has length $\delta+1$; and, if $\delta+1\le \operatorname{grade} I_{\goth g}(\Phi)$, then   $\mathcal C_{\Phi^*}^{i}$  is acyclic for $i$ satisfying (\ref{8acyclic-range}). Furthermore, the complexes  $\mathcal C_{\Phi^*}^{i}$, for $i$ satisfying (\ref{8acyclic-range}), exhibit depth-sensitivity. In particular, if $\delta\le \operatorname{grade} I_{\goth g}(\Phi)$, then $\operatorname{H}_j(\mathcal C_{\Phi^*}^i)=0$ for $2\le j$ and $i$ satisfying  (\ref{8acyclic-range}). Hypothesis~{\rm\ref{87hyp6}.\ref{87hyp6-a}} guarantees that $\delta\le \operatorname{grade} I_{\goth g}(\Psi)$; thus, $\operatorname{H}_j(\mathcal C_{\Psi^*}^i)=0$ for $2\le j$ and $i$ satisfying  (\ref{8acyclic-range}). The rows of $\mathbb {U}$ are isomorphic to  truncations of the complexes $\mathcal C_{\Psi^*}^i$, for $1\le i\le \delta$.
In particular, the top row of $\mathbb{U}$, which is also the longest row of $\mathbb{U}$, is isomorphic to the truncation of 
$$\textstyle \mathcal C^1_{\Psi^*}:\quad  0\to \bigwedge^{\goth f}F^* \otimes D_{\delta-1}G \to\cdots \to\bigwedge^{\goth g+2}F^*\otimes D_1G \to\bigwedge^{\goth g+1}F^*\otimes D_0G \to F^*\xrightarrow{\Psi^*} G $$ at $F^*$. The resulting truncation is an acyclic complex of length $\delta$; so we use the full force of Hypothesis~{\rm\ref{87hyp6}.\ref{87hyp6-a}} to conclude that the top row of $\mathbb{U}$ acyclic. In the lower rows of $\mathbb{U}$, the truncation of the corresponding generalized Eagon-Northcott complex $\mathcal C^i_{\Psi^*}$ is more severe, consequently, the row is shorter than the top row of $\mathbb{U}$ and  one need not use all of Hypothesis~{\rm\ref{87hyp6}.\ref{87hyp6-a}} to conclude that the row is acyclic. We observe that the complex $\mathcal C^1$ is also called the Buchsbaum-Rim complex. 
 
At any rate, each row of $\mathbb {U}$ has non-zero homology only at the right hand boundary. 
We observe that the modules from the right hand column of $\mathbb {U}$
 all are summands of  $\bigoplus_{i\le -1}\operatorname{Tot}(\mathbb{U})_{i}$. In particular, $\bigwedge^{\goth f-1}F\otimes \bigwedge^{\goth g}G$, which is the right-most module in the top row of $\mathbb{U}$, is a summand of $\operatorname{Tot}(\mathbb{U})_{-1}$. Standard results about total complexes now yield that $\operatorname{H}_N(\operatorname{Tot}(\mathbb{U}))=0$, for $0\le N$. 
\end{proof}

\begin{lemma}\label{87.15-bot}Adopt Data~{\rm\ref{87data6}}. Assume  Hypotheses~{\rm\ref{87hyp6}} hold. Recall the double complex ${\mathbb B}$ of Definition~{\rm \ref{87.11}}.
If $N$ is  an  integer, then 
$$\operatorname{H}_N(\operatorname{Tot}({\mathbb B}))\simeq \begin{cases} \frac R{I_1(\tau)} \otimes D_{\frac{\delta+N-1}2}(G^*),&\text{if $\delta+N-1$ is even and $1\le N$,}\\
\overline{R},&\text{if $N=0$, and}\\
0,&\text{otherwise}.\end{cases}$$
Furthermore, if $\delta+N-1$ is even and $1\le N$, then 
\begin{equation}\label{8basis-bot}\textstyle\{ \llbracket \zeta_{m^*}\rrbracket \mid m\in \binom{Y}{\frac{\delta+N-1}2}\}\end{equation} is a basis for the   
 free $R/I_1(\tau)$-module $\operatorname{H}_N(\operatorname{Tot}({\mathbb B}))$, where the formula for $\zeta_{m^*}$ is given in Lemma~{\rm\ref{8z-gamma}}.
 \end{lemma}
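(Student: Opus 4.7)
The plan is to compute $\operatorname{H}_N(\operatorname{Tot}(\mathbb{B}))$ by exploiting the short exact sequence of double complexes
\[
0 \to \mathbb{U} \to \mathbb{V} \to \mathbb{B} \to 0
\]
recorded in Definition~\ref{87.11}, which induces a long exact sequence in homology. For $N \ge 1$, both $\operatorname{H}_N(\operatorname{Tot}(\mathbb{U}))$ and $\operatorname{H}_{N-1}(\operatorname{Tot}(\mathbb{U}))$ vanish by Proposition~\ref{87.21} (since $N-1 \ge 0$), so the quotient map induces an isomorphism $\operatorname{H}_N(\operatorname{Tot}(\mathbb{V})) \xrightarrow{\sim} \operatorname{H}_N(\operatorname{Tot}(\mathbb{B}))$. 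Combined with Lemma~\ref{87.15}.\ref{87.21.d}, this gives the claimed formula for $N \ge 1$ and the vanishing when $\delta+N-1$ is odd.

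For the basis assertion, I would observe that, by construction in Lemma~\ref{8z-gamma}, the cycle $\zeta_{m^*}$ in $\operatorname{Tot}(\mathbb{B})$ is the image of the cycle $Z_{m^*}$ in $\operatorname{Tot}(\mathbb{V})$ under the quotient map. Thus the isomorphism $\operatorname{H}_N(\operatorname{Tot}(\mathbb{V})) \xrightarrow{\sim} \operatorname{H}_N(\operatorname{Tot}(\mathbb{B}))$ carries the basis $\{\llbracket Z_{m^*}\rrbracket\}$ from (\ref{8basis.b}) onto $\{\llbracket \zeta_{m^*}\rrbracket\}$, proving that (\ref{8basis-bot}) is a basis for $\operatorname{H}_N(\operatorname{Tot}(\mathbb{B}))$.

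For $N \le -1$ the group $\operatorname{Tot}(\mathbb{B})_N$ is zero by inspection of (\ref{87dis2}), so its homology vanishes trivially. The one case which the long exact sequence does not immediately settle is $N = 0$, because $\operatorname{H}_{-1}(\operatorname{Tot}(\mathbb{U}))$ is not known to vanish. Here I would compute $\operatorname{H}_0(\operatorname{Tot}(\mathbb{B}))$ directly: the constraints (\ref{87dis2}) force $\operatorname{Tot}(\mathbb{B})_1 = V_{\delta,0}$ and $\operatorname{Tot}(\mathbb{B})_0 = \bigwedge^{\goth f}F \otimes \bigwedge^{\goth g}G$, with the differential being the map $\bigwedge^{\goth g}\Psi$ of Definition~\ref{87.11}.\ref{87.11.f}. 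A straightforward unpacking in terms of a basis $e_1, \ldots, e_{\goth f}$ of $F$ shows that the image of this map is $I_{\goth g}(\Psi) \cdot (\bigwedge^{\goth f}F \otimes \bigwedge^{\goth g}G)$, since the element $e_I \otimes \omega_G$, with $|I| = \delta$, is sent (up to sign) to the $\goth g \times \goth g$ minor of $\Psi$ on rows indexed by the complement of $I$, times $e_1 \wedge \cdots \wedge e_{\goth f} \otimes \omega_G$. The cokernel is therefore $R/I_{\goth g}(\Psi) = \overline{R}$, as required.

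The only non-routine step is the direct identification of $\operatorname{H}_0$; the rest is bookkeeping on the long exact sequence plus the already-established computations in Lemma~\ref{87.15} and Proposition~\ref{87.21}. I anticipate no genuine obstacle, since all of the heavy lifting — the acyclicity of $\operatorname{Tot}(\mathbb{U})$ (which uses the full depth-sensitivity of the generalized Eagon–Northcott / Buchsbaum–Rim complexes under Hypothesis~\ref{87hyp6}.\ref{87hyp6-a}) and the explicit cycles $Z_{m^*}$ together with the Koszul-type computation of $\operatorname{H}_\bullet(\operatorname{Tot}(\mathbb{V}))$ — has already been carried out.
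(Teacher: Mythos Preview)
Your proposal is correct and follows essentially the same approach as the paper: the paper also invokes the short exact sequence $0\to\operatorname{Tot}(\mathbb{U})\to\operatorname{Tot}(\mathbb{V})\to\operatorname{Tot}(\mathbb{B})\to 0$ together with Proposition~\ref{87.21} and Lemma~\ref{87.15}.\ref{87.21.d} for $N\ge 1$, handles the basis via Lemma~\ref{8z-gamma}, and dismisses $N\le 0$ as clear from the definition of $\mathbb{B}$. Your explicit unpacking of the $N=0$ case is a bit more detailed than the paper's one-line remark, but otherwise the arguments coincide.
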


\begin{proof}The value of $\operatorname{H}_N(\operatorname{Tot}({\mathbb B}))$ for $N\le 0$ is clear from the definition of ${\mathbb B}$. For positive $N$, the short exact sequence of complexes 
$$0\to\operatorname{Tot}(\mathbb{U})\to \operatorname{Tot}(\mathbb{V})\to \operatorname{Tot}({\mathbb B})\to 0,$$ combined with Proposition~\ref{87.21}, yields that $\operatorname{H}_N(\operatorname{Tot}(\mathbb{V}))\simeq \operatorname{H}_N(\operatorname{Tot}({\mathbb B}))$ for $1\le N$; hence, Lemma~\ref{87.15}.\ref{87.21.d} completes the calculation of the isomorphism class of $\operatorname{H}_i(\operatorname{Tot}({\mathbb B}))$. Furthermore, according to Lemma~\ref{8z-gamma}, (\ref{8basis-bot}) is the basis for $\operatorname{H}_N(\operatorname{Tot}({\mathbb B}))$ that corresponds to the basis (\ref{8basis.b})  of 
$\operatorname{H}_N(\operatorname{Tot}(\mathbb{V}))$.
\end{proof}

\section{The map of complexes $\xi^\epsilon:\operatorname{Tot}({\mathbb T^\epsilon})\to \operatorname{Tot}({\mathbb B})$.}\label{xi}

Data~\ref{87data6} and \ref{unm-data} are in effect throughout this section.    The double complexes ${\mathbb T^{\epsilon}}$ and ${\mathbb B}$ are defined in \ref{87.11} and the maps and modules of ${\mathbb M^{\epsilon}}$ are given in Definition~\ref{doodles'}.

In this section we  define a map of complexes $\xi^{\epsilon}:\operatorname{Tot}({\mathbb T^{\epsilon}})\to \operatorname{Tot}({\mathbb B})$.  Once $\xi^{\epsilon}$ is  defined, we let ${\mathbb L^{\epsilon}}$ be the mapping cone of $\xi^{\epsilon}$. We prove that when Hypotheses~\ref{87hyp6} are in effect, then  
${\mathbb L^{\epsilon}}$ is a resolution,
${\mathbb M^{\epsilon}}$ is a 
subcomplex of ${\mathbb L^{\epsilon}}$, and  ${\mathbb L^{\epsilon}}/{\mathbb M^{\epsilon}}$ is split exact; so, in particular, 
${\mathbb M^{\epsilon}}$ is a resolution of 
$$\begin{cases}\overline{R},&
\text{if $\epsilon=\lceil\frac{\delta}2\rceil$, and}\\  
\widetilde{R},&\text{if $\epsilon= \lceil\frac {\delta-1}2\rceil$}.\end{cases}$$
 The most recent statement is assertion (\ref{main.a}) of Theorem~\ref{main}. 
The section  concludes with the proof of Theorem~\ref{main} and item (\ref{main.c.iii}) of Theorem~\ref{main.c}.  Theorems~\ref{main} and \ref{main.c} are the main results of the paper.  
 
Ideas from \ref{87Not1} are used in the definition of $\xi^\epsilon$.

\begin{definition}\label{def-xi} Adopt Data~{\rm{\ref{87data6}}} and 
{\rm{\ref{unm-data}}}. Recall  the double complexes ${\mathbb T^{\epsilon}}$ and ${\mathbb B}$ from Definition~{\rm\ref{87.11}}. Define the $R$-module homomorphism ${\xi^{\epsilon}:\operatorname{Tot}({\mathbb T^{\epsilon}})\to\operatorname{Tot}({\mathbb B})}$ as follows. 
\begin{enumerate}[\rm(a)]
\item If $N=0$ and $\epsilon=\frac{\delta-1}2$, then $$\textstyle\xi^{\epsilon}_N:
V_{0,\epsilon}=\mathbb T^{\epsilon}_0\to \mathbb B_0=\textstyle \bigwedge^{\goth f}F\otimes \bigwedge^{\goth g}G$$ is given by
$\textstyle\xi^{\epsilon}_0(\gamma_{\epsilon})=c(\gamma_{\epsilon})$, where 
$c$ is the map of (\ref{c-2.3}).
\item If $1\le N$ and 
 $I$ and $J$ are parameters  with
 $I+2J-\delta+1=N$, $0\le I$,  and $\epsilon\le J$, then 
\begin{align}&\xi^{\epsilon}_N(f_I\otimes \gamma_J\in V_{I,J}\subseteq \operatorname{Tot}({\mathbb T^\epsilon})_N)
\notag\\
=& 
\textstyle
(-1)^N\sum\limits_{\gf{i+2j=I+2J}{\delta\le i+j}}(-1)^{I+j}\binom{J-1-j}{J-\epsilon} \sum\limits_{m\in \binom{Y}{J-j}} [D(\mu)](m^*)\wedge f_I\otimes m(\gamma_J)
\in V_{i,j}\subseteq \operatorname{Tot}({\mathbb B})_{N}.
\notag
\end{align}
\end{enumerate}\end{definition}

\begin{lemma}\label{MOC} Adopt Data~{\rm{\ref{87data6}}} and 
{\rm{\ref{unm-data}}}. Then the $R$-module homomorphism $${\xi^{\epsilon}:\operatorname{Tot}({\mathbb T^{\epsilon}})\to\operatorname{Tot}({\mathbb B})}$$ of Definition~{\rm\ref{def-xi}} is a map of complexes.\end{lemma}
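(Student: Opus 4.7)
The plan is to verify the identity $d_{\mathbb{B}} \circ \xi_N^\epsilon = \xi_{N-1}^\epsilon \circ d_{\mathbb{T}^\epsilon}$ for each $N$ by direct computation on a typical generator $f_I \otimes \gamma_J \in V_{I,J}^{{\mathbb T}^\epsilon}$ with $\epsilon \le J$ and $I + 2J - \delta + 1 = N$, matching coefficients in each target summand $V_{i,j}^{\mathbb B}$ of $\operatorname{Tot}({\mathbb B})_{N-1}$ (indexed by pairs with $i+2j = I+2J-1$ and $\delta \le i+j$). On the left-hand side $d_{\mathbb B}\circ \xi^\epsilon$, contributions to $V_{i,j}^{\mathbb B}$ come from the $\Psi$-piece of $d_{\mathbb B}$ applied to $\xi^\epsilon(u)$ in $V_{i-1,j+1}^{\mathbb B}$ and from the $\tau$-piece applied to $\xi^\epsilon(u)$ in $V_{i+1,j}^{\mathbb B}$; because $\tau$ acts as a derivation on $\bigwedge^\bullet F$ and $[D(\mu)](m^*)$ has even exterior degree, this latter $\tau$-piece expands further as $\tau([D(\mu)](m^*))\wedge f_I + [D(\mu)](m^*) \wedge \tau(f_I)$. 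On the right-hand side $\xi^\epsilon \circ d_{\mathbb T^\epsilon}$, contributions arise from $\xi^\epsilon$ applied to the $\Psi$-component in $V_{I+1,J-1}^{{\mathbb T}^\epsilon}$ (present only when $\epsilon \le J-1$) and to the $\tau$-component in $V_{I-1,J}^{{\mathbb T}^\epsilon}$ (present when $I \ge 1$).

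The easy pairing is between the residue $[D(\mu)](m^*) \wedge \tau(f_I)$ on the left and the $\xi^\epsilon$-image of $\tau(f_I) \otimes \gamma_J$ on the right: both carry the same binomial coefficient $\binom{J-1-j}{J-\epsilon}$ and the same inner $m$-sum after the shifts $N \mapsto N-1$, $I\mapsto I-1$ in the definition of $\xi^\epsilon$, and the signs $(-1)^N$ and $(-1)^{I+j}$ combine to match. The remaining three pieces, all of ``$\Psi$-wedge-$[D(\mu)]$'' shape, are matched using three identities. First, identity $(\ref{trick2})$, $\tau([D(\mu)](\gamma)) = \sum_\ell \Psi(X_\ell) \wedge [D(\mu)](Y_\ell(\gamma))$, converts the residue $\tau([D(\mu)](m^*))\wedge f_I$ into horizontal form. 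Second, the swap identity of $(\ref{trick1})$, $\sum_\ell X_\ell \otimes m \otimes Y_\ell(m^*) = \sum_\ell X_\ell \otimes Y_\ell m \otimes m^*$, realigns the inner monomial sums (originally indexed over $\binom{Y}{J-j-1}$ and $\binom{Y}{J-j}$) so that all three contributions are in a common form. Third, Pascal's identity
\[
\textstyle\binom{J-1-j}{J-\epsilon} \;=\; \binom{J-2-j}{J-\epsilon} \;+\; \binom{J-2-j}{J-\epsilon-1}
\]
then distributes the coefficient $(-1)^{I+j}\binom{J-1-j}{J-\epsilon}$ appearing in $\xi^\epsilon \circ d_h^{{\mathbb T}^\epsilon}$ into the coefficients of the other two horizontal contributions of $d_{\mathbb B}\circ \xi^\epsilon$, closing the matching.

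Finally, several boundary situations require separate inspection. When $J = \epsilon$, the Pascal term $\binom{J-2-j}{J-\epsilon-1}$ vanishes, matching the absence of the $V_{I+1,J-1}^{{\mathbb T}^\epsilon}$ contribution enforced by the factor $\chi(\epsilon\le J-1)$ in $d_{{\mathbb T}^\epsilon}$; when $I = 0$ the $\tau$-component of $d_{\mathbb T^\epsilon}$ vanishes; and when $j$ is at the boundary of the support of $\xi^\epsilon$, the constraint $\delta\le i+j$ in the definition of $\xi^\epsilon$ and the vanishing of $\binom{J-1-j}{J-\epsilon}$ for $j \ge \epsilon$ cut off the sum in exactly the right places. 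The most delicate case is $N = 1$ with $\epsilon = (\delta-1)/2$, where $\xi^\epsilon_0$ is given by $(\ref{c-2.3})$ in terms of the integration map $\int_{X_1}$ and the distinguished element $Y_1$; here the verification of $d_{\mathbb B}\circ \xi^\epsilon_1 = \xi^\epsilon_0 \circ d_{{\mathbb T}^\epsilon}$ at the bottom of $\operatorname{Tot}({\mathbb B})$ uses property $(\ref{NT})$, $Y_1(\int_{X_1}\gamma) = \gamma$, to rewrite $c$ in a form to which the generic $(\ref{trick2})$-based manipulation applies. The main obstacle in this proof is the careful tracking of signs and of the three overlapping binomial-coefficient contributions at the heart of the matching; once the algebraic bookkeeping is correctly set up, the verification is an essentially mechanical expansion.
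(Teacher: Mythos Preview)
Your proposal is correct and follows essentially the same route as the paper's proof: both verify the square $d_{\mathbb B}\circ\xi^{\epsilon}_N=\xi^{\epsilon}_{N-1}\circ d_{\mathbb T^{\epsilon}}$ by direct expansion, splitting the $\tau$-action via the derivation rule, rewriting the $\Psi$-pieces with (\ref{trick1}) and (\ref{trick2}), and closing the computation with Pascal's identity $\binom{J-1-j}{J-\epsilon}=\binom{J-2-j}{J-\epsilon}+\binom{J-2-j}{J-\epsilon-1}$ (the paper leaves this last step implicit in the line ``We see that $S_1=S_3+S_4'$''). One small point: besides the $\epsilon=\frac{\delta-1}{2}$ boundary at $N=1$ that you flag, the paper also treats the $\epsilon=\frac{\delta}{2}$ case at $N=1$ separately, where one must observe that $[D(\mu)](\gamma_{\delta/2})\wedge(\bigwedge^{\goth g}\Psi)(\omega_{G^*})$ lies in $\tau(\bigwedge^{\goth f+1}F)=0$; this is another boundary check in the same spirit.
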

\begin{proof}We verify that each square
\begin{equation}\label{square}\xymatrix{\operatorname{Tot}({\mathbb T^{\epsilon}})_N\ar[r]^{d^{\mathbb T^{\epsilon}}}\ar[d]^{\xi^{\epsilon}_N}&\operatorname{Tot}({\mathbb T^{\epsilon}})_{N-1}\ar[d]^{\xi^{\epsilon}_{N-1}}\\
\operatorname{Tot}({\mathbb B})_N\ar[r]^{d^{\mathbb B}}&\operatorname{Tot}({\mathbb B})_{N-1}}\end{equation}commutes. The differential $d^{{\mathbb B}}$ is special when $N=1$; so we treat $N=1$ first. In this case, 
$$
\operatorname{Tot}({\mathbb T^{\epsilon}})_{N}=\begin{cases} 
V_{1,\frac{\delta-1}2},&\text{if $\epsilon=\frac{\delta-1}2$},\\
V_{0,\frac\delta2},&\text{if $\epsilon=\frac{\delta}2$, and}\\
0,&\text{if $\epsilon=\frac{\delta+1}2$},\end{cases}\quad\text{and}\quad 
\operatorname{Tot}({\mathbb T^{\epsilon}})_{N-1}=\begin{cases}
V_{0,\frac{\delta-1}2},&\text{if $\epsilon=\frac{\delta-1}2$},\\
0,&\text{if $\epsilon=\frac{\delta}2$, and}\\
0,&\text{if $\epsilon=\frac{\delta+1}2$,}\end{cases}
$$
If $\epsilon=\frac{\delta-1}2$ and $f_1\otimes \gamma_{\epsilon}\in V_{1,\epsilon}$, then $$(d^{{\mathbb B}}\circ\xi^{\epsilon}_1)(f_1\otimes \gamma_{\epsilon})=
 d^{{\mathbb B}}\big(f_1\wedge [D(\mu)](\gamma_{\epsilon})\big)=
  f_1\wedge [D(\mu)](\gamma_{\epsilon})\wedge (\textstyle\bigwedge^{\goth g}\Psi)(\omega_{G^*})\otimes \omega_{G}
$$
and 
\begingroup\allowdisplaybreaks
\begin{align}
&\textstyle \phantom{=}(\xi^{\epsilon}_0\circ d^{{\mathbb T^{\epsilon}}})(f_1\otimes \gamma_{\epsilon})
\notag\\&\textstyle =c(\tau(f_1)\cdot \gamma_{\epsilon})\notag\\
&\textstyle =\tau(f_1)\cdot  [D(\mu)](\int_{X_1} \gamma_{\epsilon})
\wedge(\bigwedge^{\goth g-1}\Psi)(\omega_{{G_+}^*})
\otimes \omega_{G_+}\wedge   Y_1\notag\\
&\textstyle = f_1\wedge \tau\Big( [D(\mu)](\int_{X_1} \gamma_{\epsilon})\wedge (\bigwedge^{\goth g-1}\Psi)(\omega_{{G_+}^*}) \Big)\otimes \omega_{G_+}\wedge Y_1,\notag\\\intertext{because $\bigwedge^\bullet F$ is a DG-module over $\bigwedge^\bullet F^*$,}&\textstyle = f_1\wedge\tau\Big([D(\mu)](\int_{X_1} \gamma_{\epsilon})\Big)\wedge (\bigwedge^{\goth g-1}\Psi)(\omega_{{G_+}^*}) \otimes \omega_{G_+}\wedge   Y_1,\notag\\\intertext{because $\tau\circ \Psi=0$,}
&\textstyle = f_1\wedge \Psi(X_1)\wedge \Big([D(\mu)]( \gamma_{\epsilon})\Big)\wedge (\bigwedge^{\goth g-1}\Psi)(\omega_{{G_+}^*})\otimes \omega_{G_+}\wedge Y_1,\notag\\\intertext{because of  (\ref{trick2}), (\ref{NT}), and the fact that $(\bigwedge^\goth g\Psi)(\bigwedge^\goth g {G_+}^*)=0$,}
&\textstyle = f_1\wedge  \Big([D(\mu)]( \gamma_{\epsilon})\Big)\wedge \Psi(X_1)\wedge (\bigwedge^{\goth g-1}\Psi)(\omega_{{G_+}^*})\otimes \omega_{G_+}\wedge Y_1\notag\\
&\textstyle = f_1\wedge  \Big([D(\mu)]( \gamma_{\epsilon})\Big)\wedge  (\bigwedge^{\goth g}\Psi)(\omega_{G^*})\otimes \omega_{G},\notag\\
\intertext{because $X_1\otimes \omega_{{G_+}^*}$ and $\omega_{G_+}\wedge Y_1$ are dual bases for $\bigwedge^\goth g G^*$ and $\bigwedge^\goth g G$, see (\ref{omega}),} 
&\textstyle =(d^{{\mathbb B}}\circ\xi^{\epsilon}_1)(f_1\otimes \gamma_{\epsilon}).\notag
\end{align}\endgroup
The calculation we just completed shows that $\operatorname{im}(\tau)\cdot \operatorname{im} (c)\subseteq \operatorname{im} (d^{\mathbb B}_1)$, in other words,
\begin{equation}\label{much-promised} I_1(\tau)\cdot \goth C\subseteq I_{\goth g}(\Psi).\end{equation}

If $\epsilon=\frac{\delta}2$ and $\gamma_{\frac \delta2}\in V_{0,\frac\delta2}=\operatorname{Tot}({\mathbb T^{\epsilon}})_{1}$, then
$$(d^{{\mathbb B}}\circ\xi^{\epsilon}_1)(\gamma_{\frac \delta2})=-d^{{\mathbb B}}([D(\mu)](\gamma_{\frac \delta2}))
=-[D(\mu)](\gamma_{\frac \delta2})\wedge ({\textstyle\bigwedge^{\goth g}}\Psi)(\omega_{G^*})\otimes \omega_G.$$Observe that $[D(\mu)](\gamma_{\frac \delta2})\wedge ({\textstyle\bigwedge^{\goth g}}\Psi)(\omega_{G^*})$ is an element of $\tau(\bigwedge^{\goth f+1}F)=0$. Indeed, if $\omega_{G^*}$ is $X_1\wedge\dots \wedge X_{\goth g}$ and 
$\gamma_{\frac \delta2}=X_1^{(a_1)}\dots X_{\goth g}^{(a_{\goth g})}$, then 
\begin{align}&[D(\mu)](\gamma_{\frac \delta2})\wedge ({\textstyle\bigwedge^{\goth g}}\Psi)(\omega_{G^*})\notag\\=&\tau\left((\mu (X_1))^{(a_1+1)}\wedge (\mu (X_2))^{(a_2)}\wedge \dots \wedge(\mu (X_{\goth g}))^{(a_{\goth g})}\wedge \tau(\mu (X_2))\wedge\dots\wedge \tau(\mu (X_{\goth g}))\right)\in 
\tau({\textstyle\bigwedge^{\goth f+1}}F).\notag\end{align} This completes the proof that (\ref{square}) commutes when $N=1$.

Now we consider $2\le N$. Fix $I$ and $J$ with $I+2J-\delta+1=N$, $0\le I$, and $\epsilon\le J$ 
and fix $f_I\otimes \gamma_J\in V_{I,J}\subseteq \operatorname{Tot}({\mathbb T^{\epsilon}})_N$. Observe that 
$$(\xi^{\epsilon}_{N-1}\circ d^{{\mathbb T^{\epsilon}}})(f_I\otimes \gamma_J)=S_1+S_2\quad\text{and}\quad (d^{{\mathbb B}}\circ \xi^{\epsilon}_N)(f_I\otimes \gamma_J)=S_3+S_4$$ with

\begingroup\allowdisplaybreaks \begin{align}
S_1&=\textstyle \xi^{\epsilon}_{N-1}\left(\chi(\epsilon\le J-1)\sum\limits_{\ell=1}^{\goth g}\Psi(X_\ell)\wedge f_I\otimes Y_\ell(\gamma_J)\in V_{I+1,J-1}\subseteq \operatorname{Tot}({\mathbb T^{\epsilon}})_{N-1}\right),\notag\\
S_2&=\textstyle\xi^{\epsilon}_{N-1}\left(\tau(f_I)\otimes \gamma_J\in V_{I-1,J}\subseteq \operatorname{Tot}({\mathbb T^{\epsilon}})_{N-1}\right),\notag\\
S_3&=(-1)^N\textstyle\sum\limits_{\gf{i+2j=I+2J}{\delta\le i+j}}(-1)^{I+j}\binom{J-1-j}{J-\epsilon} \sum\limits_{m\in \binom{Y}{J-j}} \sum\limits_{\ell=1}^{\goth g}\Psi(X_\ell)\wedge [D(\mu)](m^*)\wedge f_I\otimes Y_\ell(m(\gamma_J))\notag\\&\hskip1in\in V_{i+1,j-1}\subseteq \operatorname{Tot}({\mathbb B})_{N-1}, \notag\\
S_4&=(-1)^N\textstyle\sum\limits_{\gf{i+2j=I+2J}{\delta\le i+j}}(-1)^{I+j}\binom{J-1-j}{J-\epsilon} \sum\limits_{m\in \binom{Y}{J-j}} \chi(\delta\le i+j-1)\tau ([D(\mu)](m^*)\wedge f_I)\otimes m(\gamma_J)\notag\\&\hskip1in\in V_{i-1,j}\subseteq \operatorname{Tot}({\mathbb B})_{N-1}. \notag
\end{align}\endgroup

\noindent We compute 

$$S_1=\begin{cases}\chi(\epsilon\le J-1)\sum\limits_{\ell=1}^{\goth g} 
(-1)^{N-1}\sum\limits_{\gf{i+2j=I+2J-1}{\delta\le i+j}}(-1)^{I+1+j}\binom{J-2-j}{J-1-\epsilon}\\\hskip.5in \sum\limits_{m\in \binom{Y}{J-1-j}} [D(\mu)](m^*)\wedge \Psi(X_\ell)\wedge f_I\otimes m(Y_\ell(\gamma_J))
\in V_{i,j}\subseteq \operatorname{Tot}({\mathbb B})_{N-1}.\end{cases}
$$
The binomial coefficient renders the factor $\chi(\epsilon\le J-1)$ redundant. Apply the trick of (\ref{trick1}), followed by (\ref{trick2}), to see that 

\begingroup\allowdisplaybreaks\begin{align}
S_1&=\begin{cases}\sum\limits_{\ell=1}^{\goth g} 
(-1)^{N-1}\sum\limits_{\gf{i+2j=I+2J-1}{\delta\le i+j}}(-1)^{I+1+j}\binom{J-2-j}{J-1-\epsilon}\notag\\ \hskip.5in \sum\limits_{m\in \binom{Y}{J-j}} [D(\mu)](Y_\ell(m^*))\wedge \Psi(X_\ell)\wedge f_I\otimes m(\gamma_J)
\in V_{i,j}\subseteq \operatorname{Tot}({\mathbb B})_{N-1}\end{cases}\notag\\
 &=\begin{cases} 
(-1)^{N-1}\sum\limits_{\gf{i+2j=I+2J-1}{\delta\le i+j}}(-1)^{I+1+j}\binom{J-2-j}{J-1-\epsilon} \sum\limits_{m\in \binom{Y}{J-j}}
 \tau([D(\mu)](m^*))\wedge f_I\otimes m(\gamma_J)\\\hskip1in
\in V_{i,j}\subseteq \operatorname{Tot}({\mathbb B})_{N-1}.\end{cases}\notag
\end{align}\endgroup

\noindent It is immediate that $$ S_2=\begin{cases}
(-1)^{N-1}\sum\limits_{\gf{i+2j=I-1+2J}{\delta\le i+j}}(-1)^{I-1+j}\binom{J-1-j}{J-\epsilon} \sum\limits_{m\in \binom{Y}{J-j}} [D(\mu)](m^*)\wedge \tau(f_I)\otimes m(\gamma_J)\\\hskip.5in 
\in V_{i,j}\subseteq \operatorname{Tot}({\mathbb B})_{N-1}\end{cases}
$$
Our approach to $S_3$ is similar to our approach to $S_1$. We obtain
\begingroup\allowdisplaybreaks \begin{align}
S_3&=\begin{cases}(-1)^N\sum\limits_{\gf{i+2j=I+2J}{\delta\le i+j}}(-1)^{I+j}\binom{J-1-j}{J-\epsilon} \sum\limits_{m\in \binom{Y}{J-j+1}} \sum\limits_{\ell=1}^{\goth g}\Psi(X_\ell)\wedge [D(\mu)](Y_\ell(m^*))\wedge f_I\otimes m(\gamma_J)\\\hskip.5in \in V_{i+1,j-1}\subseteq \operatorname{Tot}({\mathbb B})_{N-1}\end{cases}
\notag\\
&=\begin{cases}(-1)^N\sum\limits_{\gf{i+2j=I+2J}{\delta\le i+j}}(-1)^{I+j}\binom{J-1-j}{J-\epsilon} \sum\limits_{m\in \binom{Y}{J-j+1}}  \tau([D(\mu)](m^*))\wedge f_I\otimes m(\gamma_J)\\\hskip.5in \in V_{i+1,j-1}\subseteq \operatorname{Tot}({\mathbb B})_{N-1}\end{cases}
\notag
\end{align}\endgroup 
Replace $i$ with $i-1$ and $j$ with $j+1$ and conclude that

$$
S_3=\begin{cases}(-1)^N\sum\limits_{\gf{i+2j+1=I+2J}{\delta\le i+j}}(-1)^{I+j+1}\binom{J-2-j}{J-\epsilon} \sum\limits_{m\in 
\binom{Y}{J-j}}  \tau([D(\mu)](m^*))\wedge f_I\otimes m(\gamma_J)\\\hskip.5in\in V_{i,j}\subseteq \operatorname{Tot}({\mathbb B})_{N-1}.\end{cases}$$

Replace $i$ with $i+1$ in $S_4$ to obtain
\begingroup\allowdisplaybreaks\begin{align}
S_4&=\begin{cases}(-1)^N\sum\limits_{\gf{i+1+2j=I+2J}{\delta\le i+1+j}}(-1)^{I+j}\binom{J-1-j}{J-\epsilon} \sum\limits_{m\in \binom{Y}{J-j}} \chi(\delta\le i+j)\tau ([D(\mu)](m^*)\wedge f_I)\otimes m(\gamma_J)\notag\\\hskip1in
\in V_{i,j}\subseteq \operatorname{Tot}({\mathbb B})_{N-1}\end{cases}\notag\\
&=\begin{cases}(-1)^N\sum\limits_{\gf{i+1+2j=I+2J}{\delta\le i+j}}(-1)^{I+j}\binom{J-1-j}{J-\epsilon} \sum\limits_{m\in \binom{Y}{J-j}} 
\tau ([D(\mu)](m^*)\wedge f_I)\otimes m(\gamma_J)\notag\\\hskip1in
\in V_{i,j}\subseteq \operatorname{Tot}({\mathbb B})_{N-1}.\end{cases}\notag\end{align}\endgroup
The action of $F^*$ on $\bigwedge^\bullet F$ now gives $S_4=S_4'+S_4''$ with
\begingroup\allowdisplaybreaks\begin{align}
S_4'&=\begin{cases}(-1)^N\sum\limits_{\gf{i+1+2j=I+2J}{\delta\le i+j}}(-1)^{I+j}\binom{J-1-j}{J-\epsilon} \sum\limits_{m\in \binom{Y}{J-j}} 
\tau ([D(\mu)](m^*))\wedge f_I\otimes m(\gamma_J)\\\hskip1in
\in V_{i,j}\subseteq \operatorname{Tot}({\mathbb B})_{N-1} \text{ and}\end{cases}\notag\\
S_4''&=\begin{cases}(-1)^N\sum\limits_{\gf{i+1+2j=I+2J}{\delta\le i+j}}(-1)^{I+j}\binom{J-1-j}{J-\epsilon} \sum\limits_{m\in \binom{Y}{J-j}} 
[D(\mu)](m^*)\wedge \tau (f_I)\otimes m(\gamma_J)\\\hskip1in
\in V_{i,j}\subseteq \operatorname{Tot}({\mathbb B})_{N-1}.\end{cases}\notag\end{align}\endgroup

\noindent We see that
$S_1=S_3+S_4'$ 
 and $S_2=S_4''$.
The square (\ref{square}) commutes for all $N$ and the proof is complete.
\end{proof}

Let ${\mathbb L^\epsilon}$ be the mapping cone of $\xi^{\epsilon}$; so $d^{{\mathbb L^\epsilon}}:{\mathbb L^\epsilon}_N\to {\mathbb L^\epsilon}_{N-1}$ is
$$\begin{matrix} \operatorname{Tot}({\mathbb T^\epsilon})[-1]_{N}\\\oplus\\ \operatorname{Tot}({\mathbb B})_{N}\end{matrix}
\xrightarrow{\ \bmatrix d_{N-1}^{\mathbb T^\epsilon}&0\\ \\(-1)^{N-1}\xi^{\epsilon}_{N-1}&d_N^{{\mathbb B}}\endbmatrix\ }\begin{matrix} \operatorname{Tot}({\mathbb T^\epsilon})[-1]_{N-1}\\\oplus\\ \operatorname{Tot}({\mathbb B})_{N-1}.\end{matrix}$$ We write $V_{I,J}^{{\mathbb T^\epsilon}}$ for the image of $V_{I,J}$ in $\operatorname{Tot}({\mathbb T^\epsilon})[-1]\subseteq {\mathbb L^\epsilon}$ and 
$V_{i,j}^{{\mathbb B}}$ for the image of $V_{i,j}$ in $\operatorname{Tot}({\mathbb B})\subseteq {\mathbb L^\epsilon}$. So, in particular,
\begin{equation}\label{new}V_{I,J}^{{\mathbb T^\epsilon}}\subseteq {\mathbb L^\epsilon}_{I+2J-\delta+2}\quad\text{and}\quad V_{i,j}^{{\mathbb B}}\subseteq {\mathbb L^\epsilon}_{i+2j-\delta+1}.\end{equation} It is useful to gather the definition of ${\mathbb L^\epsilon}$ in one place. 

\begin{definition}\label{huge-def}Adopt Data~{\rm{\ref{87data6}}} and 
{\rm{\ref{unm-data}}}. The complex ${\mathbb L^{\epsilon}}$ is described as follows:
\begin{enumerate}[\rm(a)]
\item As a graded $R$-module $${\mathbb L^\epsilon}=(\bigoplus\limits_{\epsilon\le J} V_{I,J}^{{\mathbb T^\epsilon}}) \oplus (\bigoplus\limits_{\delta\le i+j} V_{i,j}^{{\mathbb B}}) \oplus {\textstyle(\bigwedge^{\goth f}F\otimes \bigwedge^{\goth g}G)},$$
with \begin{enumerate}[\rm(i)]
\item $V_{I,J}^{{\mathbb T^{\epsilon}}}$ in position $I+2J-\delta+2$,
\item $V_{i,j}^{{\mathbb B}}$ in position $i+2j-\delta+1$, and 
\item $\textstyle(\bigwedge^{\goth f}F\otimes \bigwedge^{\goth g}G)$ in position $0$.
 \end{enumerate}

\item The $R$-module homomorphisms  of ${\mathbb L^{\epsilon}}$ are  described below.
\begin{enumerate}[\rm(i)] \item If $I+2J-\delta+2=N$, $2\le N$, $0\le I$, and $\epsilon\le J$, 
then $V_{I,J}^{{\mathbb T^{\epsilon}}}$ is a summand of ${\mathbb L^{\epsilon}}_N$ and $$d(f_I\otimes \gamma_J\in V_{I,J}^{{\mathbb T^\epsilon}})
=\begin{cases} 
\phantom{+}\chi( \epsilon\le J-1)\sum\limits_{\ell=1}^{\goth g}\Psi(X_\ell)\wedge f_I\otimes Y_\ell(\gamma_J)\in V_{I+1,J-1}^{{\mathbb T^\epsilon}}\subseteq {\mathbb L^\epsilon}_{N-1}\\
+\tau(f_I)\otimes \gamma_J\in V_{I-1,J}^{{\mathbb T^\epsilon}}\subseteq {\mathbb L^\epsilon}_{N-1}\\
+\sum\limits_{\gf{i+2j=I+2J}{\delta\le i+j}}(-1)^{I+j}\binom{J-1-j}{J-\epsilon} \sum\limits_{m\in \binom{Y}{J-j}} [D(\mu)](m^*)\wedge f_I\otimes m(\gamma_J)\\\hskip1in\in V_{i,j}^{{\mathbb B}}\subseteq {\mathbb L^\epsilon}_{N-1}.
\end{cases}$$

\item If $i+2j-\delta+1=N$, $2\le N$, $\delta\le i+j$, and $0\le i,j$, 
then $V_{i,j}^{{\mathbb B}}$ is a summand of 
${\mathbb L^\epsilon}_N$ and 
 $$d(f_i\otimes \gamma_j\in V_{i,j}^{{\mathbb B}}) 
=\begin{cases} 
\phantom{+}\sum\limits_{\ell=1}^{\goth g}\Psi(X_\ell)\wedge f_i\otimes Y_\ell(\gamma_j)\in V_{i+1,j-1}^{{\mathbb B}}\subseteq {\mathbb L^\epsilon}_{N-1}\\
+\chi(\delta\le i+j-1)\tau (f_i)\otimes \gamma_j\in V_{i-1,j}^{{\mathbb B}}\subseteq {\mathbb L^\epsilon}_{N-1}.
\end{cases}$$
\item The $R$-module homomorphism  \begin{align} \textstyle d:{\mathbb L^\epsilon}_1& \textstyle =V_{\delta,0}^{{\mathbb B}}\oplus \chi(\textstyle\epsilon=\frac{\delta-1}2) V_{0,\epsilon}^{\mathbb T^{\epsilon}} \to{\mathbb L^\epsilon}_0={\textstyle(\bigwedge^{\goth f}F\otimes \bigwedge^{\goth g}G)}\notag\\ \intertext{is}
\textstyle d(f_\delta\in V_{\delta,0}^{{\mathbb B}})&=\textstyle f_\delta\wedge (\bigwedge^{\goth g}\Psi)(\omega_{G^*})\otimes \omega_G\notag\end{align}and, if $\epsilon= \frac{\delta-1}2$, then
$d(\gamma_\epsilon\in V_{0,\epsilon}^{\mathbb T^{\epsilon}}) =
c(\gamma_\epsilon)$  for $c$ as defined in {\rm(\ref{c-2.3})}.
\end{enumerate}
\end{enumerate}
\end{definition}
 \begin{remark}\label{rmk5} A quick comparison of Definitions \ref{doodles'} and \ref{huge-def} shows that ${\mathbb M^\epsilon}$ is a complex, and indeed a subcomplex of ${\mathbb L^\epsilon}$. The only tricky part of this assertion is already contained in Remark~\ref{num-info-Med}.\ref{num-info-Med.i}.\end{remark}
 
\begin{lemma}\label{real-10.8} Adopt Data~{\rm{\ref{87data6}}} and 
{\rm{\ref{unm-data}}}. Retain the complexes
${\mathbb M^\epsilon}\subseteq {\mathbb L^\epsilon}$ of Remark~{\rm\ref{rmk5}}.
Then the complex ${\mathbb L^\epsilon}/{\mathbb M^\epsilon}$ is split exact. So, in particular, $$\operatorname{H}_N({\mathbb L^\epsilon})=\operatorname{H}_N({\mathbb M^\epsilon})$$ for all $N$.\end{lemma}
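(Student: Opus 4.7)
I plan to exhibit $\mathbb{L}^\epsilon/\mathbb{M}^\epsilon$ as the mapping cone of an isomorphism between complexes of free $R$-modules, whence split exactness follows from an explicit contracting homotopy. Write $M^T=\bigoplus_{J\ge\epsilon,\,I+J\le\delta-1}V_{I,J}^{\mathbb{T}^\epsilon}$ and $M^B=\bigoplus_{j\le\epsilon-1,\,i+j\ge\delta}V_{i,j}^{\mathbb{B}}$ for the ``Top''- and ``Bot''-parts of $\mathbb{M}^\epsilon$, together with the degree-zero summand $K=\bigwedge^{\goth f}F\otimes\bigwedge^{\goth g}G$. The argument of Remark~\ref{num-info-Med}.\ref{num-info-Med.i}, applied to the formula of Definition~\ref{def-xi}, gives $\xi^\epsilon(M^T)\subseteq M^B\oplus K$, so $\mathbb{M}^\epsilon$ is a sub-mapping-cone of $\mathbb{L}^\epsilon=\operatorname{Cone}(\xi^\epsilon)$ and the quotient $\mathbb{L}^\epsilon/\mathbb{M}^\epsilon$ is the mapping cone of the induced chain map $\bar\xi\colon X\to Y$, where $X$ (resp.\ $Y$) has modules $V_{I,J}^{\mathbb{T}^\epsilon}$ (resp.\ $V_{I,J}^{\mathbb{B}}$) indexed by $J\ge\epsilon$ and $I+J\ge\delta$.

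Next I compute $\bar\xi$. Fix $V_{I,J}^{\mathbb{T}^\epsilon}$ in $X$; a target summand $V_{i,j}^{\mathbb{B}}$ of $\xi^\epsilon$ survives in $Y$ only if $j\ge\epsilon$, and the sum over $m\in\binom{Y}{J-j}$ in Definition~\ref{def-xi} is empty for $j>J$, so $\epsilon\le j\le J$. For $\epsilon\le j<J$ the binomial $\binom{J-1-j}{J-\epsilon}$ vanishes because $J-1-j<J-\epsilon$. Only the diagonal term $(i,j)=(I,J)$ survives, with $m=1$ and $[D(\mu)](m^*)=1$; its coefficient in Definition~\ref{doodles'} evaluates to
\[
(-1)^{I+J}\binom{-1}{J-\epsilon}\;=\;(-1)^{I+J}(-1)^{J-\epsilon}\;=\;(-1)^{I+\epsilon},
\]
via the generalized convention $\binom{-1}{k}=(-1)^k$ implicit in the paper and needed for the Pascal identity in the proof of Lemma~\ref{MOC}. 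Thus $\bar\xi$ carries $V_{I,J}^{\mathbb{T}^\epsilon}$ to $V_{I,J}^{\mathbb{B}}$ by the unit $(-1)^{I+\epsilon}$, and since the $\Psi$- and $\tau$-parts of $d$ on corresponding summands of $X$ and $Y$ are governed by the same constraints ($J\ge\epsilon+1$ and $I+J\ge\delta+1$, respectively), $\bar\xi$ is an isomorphism of complexes of free $R$-modules.

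Finally I define a null-homotopy $h$ of degree $+1$ on $\mathbb{L}^\epsilon/\mathbb{M}^\epsilon$ by $h|_X=0$ and $h|_{V_{I,J}^{\mathbb{B}}}=(-1)^{I+\epsilon}$ times the canonical identification with $V_{I,J}^{\mathbb{T}^\epsilon}$. A direct check yields $dh+hd=\mathrm{id}$: on $V_{I,J}^{\mathbb{B}}$ the $\Psi$- and $\tau$-contributions cancel between $dh$ and $hd$ (the sign flip $(-1)^{I+\epsilon}\mapsto(-1)^{(I\pm1)+\epsilon}$ supplies the required minus), while the diagonal $\bar\xi$-component of $dh$ yields the identity; on $V_{I,J}^{\mathbb{T}^\epsilon}$ only the $h\bar\xi$ term is nonzero and it likewise equals the identity. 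Hence $\mathbb{L}^\epsilon/\mathbb{M}^\epsilon$ is split exact, and the equality $\operatorname{H}_N(\mathbb{L}^\epsilon)=\operatorname{H}_N(\mathbb{M}^\epsilon)$ is then immediate from the long exact homology sequence of $0\to\mathbb{M}^\epsilon\to\mathbb{L}^\epsilon\to\mathbb{L}^\epsilon/\mathbb{M}^\epsilon\to 0$. The principal obstacle is the binomial bookkeeping in the middle paragraph, where the edge case $j=J$ handled via the generalized convention $\binom{-1}{k}=(-1)^k$ is precisely what furnishes the diagonal contribution making $\bar\xi$ invertible.
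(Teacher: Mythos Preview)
Your proof is correct and follows essentially the same approach as the paper. Both arguments identify $\mathbb{L}^\epsilon/\mathbb{M}^\epsilon$ as the mapping cone of the map $\bar\xi$ induced by $\xi^\epsilon$ and then show $\bar\xi$ is invertible by computing the diagonal component via $\binom{-1}{J-\epsilon}=(-1)^{J-\epsilon}$; the paper is content to observe that the matrix of $\bar\xi$ is triangular with units on the diagonal (using only the vanishing for $J<j$ from the empty sum), while you go slightly further and note that the components with $\epsilon\le j<J$ also vanish (because $0\le J-1-j<J-\epsilon$ forces $\binom{J-1-j}{J-\epsilon}=0$), making $\bar\xi$ diagonal, and then write down the explicit contracting homotopy that the paper leaves implicit.
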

\begin{proof}The complex ${\mathbb L^\epsilon}/{\mathbb M^\epsilon}$, which  is 
$$\bigoplus\limits_{\gf{\epsilon\le J}{\delta\le I+J}} V_{I,J}^{{\mathbb T^{\epsilon}}} \oplus \bigoplus\limits_{\gf{ \epsilon \le j}{\delta\le i+j}} V_{i,j}^{{\mathbb B}},$$
is the mapping cone of 
\begin{equation}\xymatrix{\bigoplus\limits_{\gf{\epsilon\le J}{\delta\le I+J}} V_{I,J}^{{\mathbb T^\epsilon}}\ar[d]^{\xi^{\epsilon\, \prime}}\\  \bigoplus\limits_{\gf{ \epsilon \le j}{\delta\le i+j}} V_{i,j}^{{\mathbb B}},}\notag\end{equation}where $\xi^{\epsilon\, \prime}$ is the map induced by $\xi^{\epsilon}$. It suffices to show that $\xi^{\epsilon\, \prime}$ is an isomorphism. We show that if $I+2J=i+2j$, then 
 $\xi^{\epsilon\, \prime}:V_{I,J}^{{\mathbb T^\epsilon}}\to V_{i,j}^{{\mathbb B}}$ is an isomorphism if $J=j$ and is the zero map if $J<j$. The second assertion is obvious because the sum 
$$\sum\limits_{m\in\binom{Y}{J-j}}$$
over the empty set (there are no monomials of negative degree) is zero.
The first assertion is also obvious because the component  $\xi^{\epsilon\, \prime}:V_{I,J}^{{\mathbb T^\epsilon}}\to V_{I,J}^{{\mathbb B}}$ of $\xi^{\epsilon\, \prime}$ 
sends $f_I\otimes \gamma_J\in V_{I,J}^{{\mathbb T^\epsilon}}$ to
$$\textstyle 
(-1)^N(-1)^{I+J}\binom{-1}{J-\epsilon} \sum\limits_{m\in \binom{Y}{0}} [D(\mu)](m^*)\wedge f_I\otimes m(\gamma_J)=
(-1)^{N+I+\epsilon} f_I\otimes \gamma_J
\in V_{I,J}^{{\mathbb B}},
$$because $\binom{-1}{b}=(-1)^b$ for all non-negative integers $b$, where $N=I+2J-\delta+1$.
 Thus, $$\xymatrix{\bigoplus\limits_{\gf{\gf{\epsilon\le J}{\delta\le I+J}}{I+2J=N}} V_{I,J}^{{\mathbb T^\epsilon}}\ar[d]^{\xi^{\epsilon\, \prime}_N}\\  \bigoplus\limits_{\gf{\gf{ \epsilon\le j}{\delta\le i+j}}{i+2j=N}} V_{i,j}^{{\mathbb B}}}$$ is represented by a square triangular matrix  with isomorphisms on the main diagonal.
\end{proof}

\begin{lemma}\label{resol}Adopt Data~{\rm{\ref{87data6}}} and 
{\rm{\ref{unm-data}}}. Recall the map of complexes $\xi^{\epsilon}:\operatorname{Tot}({\mathbb T^\epsilon})\to\operatorname{Tot}({\mathbb B})$ from Lemma~{\rm\ref{MOC}} and the complexes ${\mathbb L^\epsilon}$ and ${\mathbb M^\epsilon}$ 
from Lemma~{\rm\ref{real-10.8}}.
\begin{enumerate}[\rm(a)]
\item\label{resol.a} If $N$ is a positive integer with $\delta+N-1$ even and $\gamma$ is an element of $D_{\frac{\delta+N-1}2}(G^*)$, $$z^\epsilon_\gamma\in \operatorname{Tot}({\mathbb T^\epsilon})_N\quad \text{and}\quad \zeta_\gamma\in\operatorname{Tot}({\mathbb B})_N$$ are the cycles of Lemma~{\rm\ref{8z-gamma}}, then $\xi^{\epsilon}(z^\epsilon_\gamma)=(-1)^{\epsilon+N}\zeta_{\gamma}$.
\item\label{resol.b} If Hypotheses~{\rm\ref{87hyp6}} are in effect, then ${\mathbb L^\epsilon}$ and ${\mathbb M^\epsilon}$ are resolutions
of $$\begin{cases}\overline{R},&
\text{if $\epsilon=\frac{\delta}2$ or $\epsilon=\frac{\delta+1}2$, and}\\  
\widetilde{R},&\text{if $\epsilon= \frac {\delta-1}2$}.\end{cases}$$
 by free $R$-modules.
\end{enumerate}
\end{lemma}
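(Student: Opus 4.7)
The plan is to prove (a) by a direct combinatorial computation with the explicit formulas, and then to deduce (b) from (a) via the long exact sequence of homology attached to the mapping cone $\mathbb L^\epsilon$, combined with the homology computations of $\operatorname{Tot}(\mathbb T^\epsilon)$ and $\operatorname{Tot}(\mathbb B)$ already in hand. A concluding appeal to Lemma~\ref{real-10.8} then transfers the resolution statement from $\mathbb L^\epsilon$ to $\mathbb M^\epsilon$.

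For part (a), apply Definition~\ref{def-xi} componentwise to the explicit expression for $z^\epsilon_\gamma$ given in Lemma~\ref{8z-gamma}. Each component of $z^\epsilon_\gamma$ sitting in $V_{I,J}$ produces sums of terms of the form $[D(\mu)](m^*)\wedge [D(\mu)](m_1^*)$, which, because $D(\mu)$ is a morphism of divided power algebras, collapse to $[D(\mu)](m^*\!\cdot\!m_1^*)$. Reindexing by the product monomial $m' = m\cdot m_1\in\binom{Y}{M-j}$, with $M=(\delta+N-1)/2$, the Vandermonde-type identity $\sum_{a+b=c,\,|a|=s}\binom{c}{a}=\binom{|c|}{s}$ absorbs the sum over decompositions of $m'$ into factors of degrees $J-j$ and $M-J$. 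The whole computation then reduces to verifying the binomial identity
\begin{equation*}
\sum_{J=\max(\epsilon,j)}^{M}(-1)^{J}\binom{J-1-j}{J-\epsilon}\binom{M-j}{J-j}=(-1)^{\epsilon} \qquad (0\le j\le N-1),
\end{equation*}
a routine hypergeometric check whose substitution yields $\xi^{\epsilon}(z^\epsilon_\gamma)=(-1)^{\epsilon+N}\zeta_\gamma$.

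For part (b), the mapping cone furnishes a short exact sequence of complexes $0\to\operatorname{Tot}(\mathbb B)\to\mathbb L^\epsilon\to\operatorname{Tot}(\mathbb T^\epsilon)[-1]\to 0$ with associated long exact sequence
\begin{equation*}
\cdots\to\operatorname{H}_N(\operatorname{Tot}(\mathbb T^\epsilon))\xrightarrow{\xi^\epsilon_*}\operatorname{H}_N(\operatorname{Tot}(\mathbb B))\to\operatorname{H}_N(\mathbb L^\epsilon)\to\operatorname{H}_{N-1}(\operatorname{Tot}(\mathbb T^\epsilon))\xrightarrow{\xi^\epsilon_*}\cdots.
\end{equation*}
Under Hypotheses~\ref{87hyp6}, Lemmas~\ref{87.15} and \ref{87.15-bot} identify $\operatorname{H}_N(\operatorname{Tot}(\mathbb T^\epsilon))$ and $\operatorname{H}_N(\operatorname{Tot}(\mathbb B))$ for $N\ge 1$ as free $R/I_1(\tau)$-modules with identically indexed bases $\{\llbracket z^\epsilon_{m^*}\rrbracket\}$ and $\{\llbracket\zeta_{m^*}\rrbracket\}$; part (a) matches these bases up to sign. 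Hence $\xi^\epsilon_*$ is an isomorphism on $\operatorname{H}_N$ for each $N\ge 1$ where either side is nonzero, so $\operatorname{H}_N(\mathbb L^\epsilon)=0$ for every $N\ge 2$.

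The tail of the sequence splits according to $\epsilon$. When $\epsilon\in\{\delta/2,(\delta+1)/2\}$, the constraint $\epsilon\le J$ forces $\operatorname{Tot}(\mathbb T^\epsilon)_0=0$, and the tail collapses to $\operatorname{H}_1(\mathbb L^\epsilon)=0$ together with $\operatorname{H}_0(\mathbb L^\epsilon)\cong\operatorname{H}_0(\operatorname{Tot}(\mathbb B))=\overline R$. When $\epsilon=(\delta-1)/2$ the parity of $\delta$ kills $\operatorname{H}_1$ of both total complexes, leaving the four-term sequence
\begin{equation*}
0\to\operatorname{H}_1(\mathbb L^\epsilon)\to\operatorname{H}_0(\operatorname{Tot}(\mathbb T^\epsilon))\xrightarrow{\xi^\epsilon_*}\operatorname{H}_0(\operatorname{Tot}(\mathbb B))\to\operatorname{H}_0(\mathbb L^\epsilon)\to 0.
\end{equation*}
Since $\xi^\epsilon_0=c$, the cokernel is manifestly $R/(I_\goth g(\Psi)+\goth C)=\widetilde R$, giving $\operatorname{H}_0(\mathbb L^\epsilon)=\widetilde R$. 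The main obstacle is to prove that the induced map $R/I_1(\tau)\otimes D_\epsilon(G^*)\to\overline R$ is injective, which is what forces $\operatorname{H}_1(\mathbb L^\epsilon)=0$. The expected route combines the containment $\goth C\cdot I_1(\tau)\subseteq I_\goth g(\Psi)$ established inside the proof of Lemma~\ref{MOC} with the grade hypotheses~\ref{87hyp6} to preclude any nontrivial $R/I_1(\tau)$-linear combination of the generators $c(m^*)$ from lying in $I_\goth g(\Psi)$.
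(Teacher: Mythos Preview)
Your treatment of part (a) and the $N\ge 2$ portion of part (b) match the paper's argument line for line: both apply Definition~\ref{def-xi} to the explicit cycle $z^\epsilon_\gamma$, collapse the double sum via Lemma~\ref{Gamma}, and reduce to the same binomial identity. The paper's proof of (b) is in fact terser than yours: it simply asserts that the isomorphism on $\operatorname{H}_N$ for positive $N$ forces $\operatorname{H}_N(\mathbb L^\epsilon)=0$ for all positive $N$ via the mapping-cone long exact sequence, without singling out the case $\epsilon=\frac{\delta-1}{2}$. You have correctly observed that in this case one must also check injectivity of $\xi^\epsilon_*$ on $\operatorname{H}_0$ to obtain $\operatorname{H}_1(\mathbb L^\epsilon)=0$; the paper does not address this explicitly.

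The genuine gap in your proposal is that you do not prove this injectivity; you only sketch an ``expected route'' via the containment $\goth C\cdot I_1(\tau)\subseteq I_\goth g(\Psi)$, and that inclusion goes the wrong way to rule out relations among the $c(m^*)$ modulo $I_\goth g(\Psi)$. Here is a clean argument that closes the gap. From the long exact sequence, $\operatorname{H}_1(\mathbb L^\epsilon)$ embeds in $\operatorname{H}_0(\operatorname{Tot}(\mathbb T^\epsilon))$ and is therefore annihilated by $I_1(\tau)$; by Lemma~\ref{real-10.8} the same holds for $\operatorname{H}_1(\mathbb M^\epsilon)$. Suppose $\operatorname{H}_1(\mathbb M^\epsilon)\ne 0$ and pick $\mathfrak p\in\operatorname{Ass}_R\operatorname{H}_1(\mathbb M^\epsilon)$. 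Then $I_1(\tau)\subseteq\mathfrak p$, so $\operatorname{depth}R_\mathfrak p\ge\operatorname{grade}I_1(\tau)\ge\goth f$ by Hypothesis~\ref{87hyp6}.\ref{87hyp6-b}. Now $(\mathbb M^\epsilon)_\mathfrak p$ is a complex of finite free $R_\mathfrak p$-modules of length $N_{\max}\le\goth f-1$ (see (\ref{last})), with $\operatorname{H}_i=0$ for $i\ge 2$ and $\operatorname{depth}(\operatorname{H}_1(\mathbb M^\epsilon))_\mathfrak p=0$. The Peskine--Szpiro acyclicity lemma then forces $(\mathbb M^\epsilon)_\mathfrak p$ to be acyclic, contradicting $(\operatorname{H}_1(\mathbb M^\epsilon))_\mathfrak p\ne 0$. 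Hence $\operatorname{H}_1(\mathbb M^\epsilon)=0$, and part (b) follows.
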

\begin{proof} (\ref{resol.a}) 
We compute $$\xi^{\epsilon}(z^\epsilon_{\gamma})=
\sum\limits_{J=\epsilon}^{\frac{\delta+N-1}2}\sum\limits_{m_1\in \binom{Y}{\frac{\delta+N-1}2-J}}(-1)^{J}
\xi^{\epsilon}\left([D(\mu)](m_1^*)\otimes m_1(\gamma)\in V_{\delta+N-1-2J,J}\subseteq \operatorname{Tot}({\mathbb T^\epsilon})_N\right)$$
$$=\begin{cases}
\sum\limits_{J=\epsilon}^{\frac{\delta+N-1}2}\sum\limits_{m_1\in \binom{Y}{\frac{\delta+N-1}2-J}}\sum\limits_{\gf{i+2j=\delta+N-1}{\delta\le i+j}} \sum\limits_{m\in \binom{Y}{J-j}}(-1)^{J+N+j}\binom{J-1-j}{J-\epsilon}\\ \hskip1in\times[D(\mu)](m^*)\wedge [D(\mu)](m_1^*)\otimes m(m_1(\gamma))
\in V_{i,j}\subseteq \operatorname{Tot}({\mathbb B})_{N}
\end{cases}$$
$$=\begin{cases}
\sum\limits_{J=\epsilon}^{\frac{\delta+N-1}2}\sum\limits_{\gf{i+2j=\delta+N-1}{\delta\le i+j}} 
\sum\limits_{m_1\in \binom{Y}{\frac{\delta+N-1}2-j}}
\sum\limits_{m\in \binom{Y}{J-j}}(-1)^{J+N+j}\binom{J-1-j}{J-\epsilon}\\ \hskip1in\times[D(\mu)](m^*)\wedge [D(\mu)](m(m_1^*))\otimes m_1(\gamma)
\in V_{i,j}\subseteq \operatorname{Tot}({\mathbb B})_{N}.
\end{cases}$$
The parameters $i$ and $j$ satisfy $i+2j=\delta+N-1$ and $\delta\le i+j$ if and only if they satisfy $i=\delta+N-1-2j$ and $j\le N-1$.
It follows that
\begingroup\allowdisplaybreaks \begin{align}
\xi^{\epsilon}(z^\epsilon_{\gamma})&=\begin{cases}\sum\limits_{j=0}^{N-1} \sum\limits_{m_1\in \binom{Y}{\frac{\delta+N-1}2-j}}
(-1)^{j+N}
\sum\limits_{J=\epsilon}^{\frac{\delta+N-1}2}(-1)^{J}
\binom{J-1-j}{J-\epsilon} \\ \hskip.2in\times\sum\limits_{m\in \binom{Y}{J-j}} [D(\mu)](m^*)\wedge [D(\mu)](m(m_1^*))\otimes m_1(\gamma)\in V_{
\delta+N-1-2j,j}\subseteq \operatorname{Tot}({\mathbb B})_{N}\end{cases}
\notag\\
&=\begin{cases}\sum\limits_{j=0}^{N-1} \sum\limits_{m_1\in \binom{Y}{\frac{\delta+N-1}2-j}}
(-1)^{j+N}
\sum\limits_{J=\epsilon}^{\frac{\delta+N-1}2}(-1)^{J}
\binom{J-1-j}{J-\epsilon} \\ 
\hskip.2in\times [D(\mu)](\sum\limits_{m\in \binom{Y}{J-j}}m^*\cdot m(m_1^*))\otimes m_1(\gamma)\in V_{
\delta+N-1-2j,j}\subseteq \operatorname{Tot}({\mathbb B})_{N}.\end{cases}
\notag\end{align}\endgroup Apply Lemma~\ref{Gamma} to see that 
\begin{equation}\label{context} \xi^{\epsilon}(z^\epsilon_{\gamma})=\begin{cases}\sum\limits_{j=0}^{N-1} \sum\limits_{m_1\in \binom{Y}{\frac{\delta+N-1}2-j}}
(-1)^{j+N}
\sum\limits_{J=\epsilon}^{\frac{\delta+N-1}2}(-1)^{J}
\binom{J-1-j}{J-\epsilon}\binom{\frac{\delta+N-1}2-j}{J-j} \\ 
\hskip.3in\times [D(\mu)](m_1^*)\otimes m_1(\gamma)\in V_{
\delta+N-1-2j,j}\subseteq \operatorname{Tot}({\mathbb B})_{N}.\end{cases}
\end{equation}
We simplify
$$\textstyle E= \sum\limits_{J=\epsilon}^{\frac{\delta+N-1}2}(-1)^{J}
\binom{J-1-j}{J-\epsilon}\binom{\frac{\delta+N-1}2-j}{J-j}$$ in the context of (\ref{context}).
Let $k=J-\epsilon$. The expression is
$$E=(-1)^{\epsilon}\textstyle \sum\limits_{k=0}^{\frac{\delta+N-1}2-\epsilon}(-1)^{k}
\binom{k+\epsilon-1-j}{k}\binom{\frac{\delta+N-1}2-j}{k+\epsilon-j}.$$
We need only consider $0\le\frac{\delta+N-1}2-j$ (otherwise, the monomial $m_1$ of (\ref{context}) has negative degree and  (\ref{context}) is zero); so
$$\binom{\frac{\delta+N-1}2-j}{k+\epsilon-j}=
\binom{\frac{\delta+N-1}2-j}
{\frac{\delta+N-1}2-\epsilon-k}.
$$It is always true for all integers $a$ and $b$ that $\binom ba=(-1)^a\binom{a-b-1}{a}$; so in particular, 
$$(-1)^{k}
\binom{k+\epsilon-1-j}{k}= \binom {j-\epsilon}k,$$and 
\begin{align}E&=(-1)^{\epsilon}\textstyle \sum\limits_{k=0}^{\frac{\delta+N-1}2-\epsilon}\binom {j-\epsilon}k
\binom{\frac{\delta+N-1}2-j}
{\frac{\delta+N-1}2-\epsilon-k}\notag\\
&=(-1)^{\epsilon}\textstyle \sum\limits_{k\in\mathbb Z}\binom {j-\epsilon}k
\binom{\frac{\delta+N-1}2-j}
{\frac{\delta+N-1}2-\epsilon-k}
\notag\end{align}
Now use the identity 
$$\sum\limits_{k\in \mathbb Z}\binom{b}{c-k}\binom ak=\binom{a+b}c,$$see, for example, \cite[Obs.~1.2.i]{K95},
with $a= j-\epsilon$, $b=\frac{\delta+N-1}2-j$, and $c=\frac{\delta+N-1}2-\epsilon$ 
to learn that  
$$E
=(-1)^{\epsilon}
\binom{\frac{\delta+N-1}2-\epsilon}{\frac{\delta+N-1}2-\epsilon}.
$$The hypotheses that $N$ is positive and $\delta+N-1$ is even force 
\begin{equation}\label{non-neg}\textstyle\frac{\delta+N-1}2-\epsilon\end{equation} to be non-negative. 
(If  $\epsilon=\frac{\delta-1}2$, then (\ref{non-neg}) is at least $\frac N2$. If $\epsilon=\frac{\delta}2$, then  (\ref{non-neg}) is at least $\frac{N-1}2$. 
If  $\epsilon=\frac{\delta+1}2$, then 
$2\le N$ and  (\ref{non-neg}) is at least $\frac{N-2}2$.) Thus $E=(-1)^{\epsilon}$; and
\begin{align} &\xi^{\epsilon}(z^\epsilon_{\gamma})\notag \\=&(-1)^{N+\epsilon}\sum\limits_{j=0}^{N-1} \sum\limits_{m_1\in \binom{Y}{\frac{\delta+N-1}2-j}}
(-1)^{j}
[D(\mu)](m_1^*)\otimes m_1(\gamma)\in V_{
\delta+N-1-2j,j}\subseteq \operatorname{Tot}({\mathbb B})_{N}\notag \\=&(-1)^{N+\epsilon}\zeta_{\gamma}.\notag
\end{align}This completes the proof of (\ref{resol.a}).

\medskip\noindent(\ref{resol.b}) Assertion (\ref{resol.a}), together with Lemmas~\ref{87.15} and \ref{87.15-bot}, shows that $\xi^{\epsilon}$ induces an isomorphism $\operatorname{H}_N(\operatorname{Tot}({\mathbb T^\epsilon}))\to \operatorname{H}_N(\operatorname{Tot}({\mathbb B}))$ for all positive $N$. The complex ${\mathbb L^\epsilon}$ is the mapping cone of $\xi^{\epsilon}$; so, the long exact sequence of homology associated to a mapping cone of complexes guarantees that $\operatorname{H}_N({\mathbb L^\epsilon})=0$ for all positive $N$. In other words, ${\mathbb L^\epsilon}$ is a resolution of $\operatorname{H}_0({\mathbb L^\epsilon})$. Apply Lemma~\ref{real-10.8} to conclude that ${\mathbb M^\epsilon}$ is also a resolution of $\operatorname{H}_0({\mathbb M^\epsilon})=\operatorname{H}_0({\mathbb L^\epsilon})$. 
\end{proof}

The following remark does not use any of the machinery of the paper and could appear almost anywhere. It provides the connection between the assertions in   Theorem~\ref{main} about resolutions and projective dimension and the assertions about grade unmixedness. The unmixedness assertions (\ref{PS}) and (\ref{main.new-f}) of Theorem~\ref{main}  
play a critical role in \cite{KPU-BA}.  
\begin{remark}\label{CMPS} Adopt Data~{\rm{\ref{87data6}}} and 
{\rm{\ref{unm-data}}}.  Assume that  
Hypothesis~{\rm\ref{87hyp6}.\ref{87hyp6-a}}
 is in effect. Let $\mathfrak p \in V(I_\goth g(\Psi)) \setminus V(I_1(\tau))$. Then the ideals $I_\goth g(\Psi)_{\mathfrak p}$  and $(I_\goth g(\Psi)+\goth C) _{\mathfrak p}$ are equal and are perfect of grade $\delta$.
\end{remark}
\begin{proof} It follows immediately from the definition of $\Psi$, as given in Data~{\rm\ref{87data6}}, that
$$\tau\circ \Psi=0;$$
therefore,  
$\tau_{\mathfrak p} \circ \Psi_{\mathfrak p}=0$ and $\operatorname{im}  \Psi_{\mathfrak p} \subset {\rm ker} \, \tau_{\mathfrak p}$. On the other hand, the choice of $\mathfrak p$ yields that the map $\tau_{\mathfrak p}: F_{\mathfrak p} \longrightarrow R_{\mathfrak p}$ is surjective and hence splits. Thus $\operatorname{ker}  \tau_{\mathfrak p}$ is a free summand of $F_{\mathfrak p}$ of rank $\goth f-1$. Therefore we may replace the target $F_{\mathfrak p}$ of the map $ \Psi_{\mathfrak p} $ by $\operatorname{ker} \tau_{\mathfrak p} \simeq R_{\mathfrak p}^{\goth f-1}$ without changing $I_\goth g(\Psi_{\mathfrak p})=I_\goth g(\Psi)_{\mathfrak p}$. By the choice of $\mathfrak p$ the ideal $I_\goth g(\Psi)_{\mathfrak p}$ is proper and has grade at least $\delta=(\goth f-1)-\goth g+1$, therefore $I_\goth g(\Psi)_{\mathfrak p}$ is perfect of grade $\delta$. 

If $\delta$ is even, then $\goth C=0$.
If $\delta$ is odd, then the inclusion $\goth C_{\mathfrak p}\subseteq I_\goth g(\Psi)_{\mathfrak p}$ is an immediate consequence of the fact $I_1(\tau)\cdot \goth C\subseteq I_\goth g(\Psi)$; see (\ref{much-promised}).
\end{proof}

\begin{proof3} (\ref{main-cx}) Remark~\ref{rmk5} explains why Lemma~\ref{MOC}, together with Remark~{\rm\ref{num-info-Med}.\ref{num-info-Med.i}}, shows that ${\mathbb M^\epsilon}$ is a complex.

\medskip\noindent(\ref{main.a},\ref{main.new-b})  Apply Lemma~\ref{resol} to conclude that ${\mathbb M^\epsilon}$ is a resolution. It is clear that $$\operatorname{H}_0(\mathbb M^\epsilon)=\begin{cases} \overline{R}, &\text{if $\epsilon$ is $\frac \delta2$ or $\frac {\delta+1}2$, and}\\
\widetilde{R},&\text{if $\epsilon=\frac{\delta-1}2$}.\end{cases}$$ Of course, if $\delta$ is even, then $\lceil \frac{\delta-1}2\rceil=\lceil \frac{\delta}2\rceil$ and $\overline{R}=\widetilde{R}$. 
\qed\end{proof3}

\bigskip In Remark~\ref{promise} we promised to explain why the  hypothesis $\operatorname{grade} I_\goth g(\Psi)\le \delta$, which appears in Theorem~\ref{main}.\ref{main.new-e.iii}, corresponds to quotients $\widetilde{R}=R/(\goth C+I_\goth g(\Psi))$ which are of no interest. In Observation~\ref{6.rmk}, we show that if
$\operatorname{grade} I_\goth g(\Psi)\le \delta$ fails (in the context of Theorem~\ref{main}.\ref{main.new-e.iii}), then $\widetilde {R}=0$. We notice that Theorem~\ref{main}.\ref{main.new-b} remains correct and meaningful in this situation: $\mathbb M^\epsilon$ is a resolution of $0$; in other words,  $\mathbb M^\epsilon$ is split exact.
\begin{observation}\label{6.rmk}  Adopt Data~{\rm{\ref{87data6}}} and 
{\rm{\ref{unm-data}}} with $\goth g=1$ and $\delta$ odd.  
Assume that Hypothesis~{\rm\ref{87hyp6}} are in effect and that $I_1(\tau)$ is a proper ideal of $R$. Then the following statements are equivalent:
\begin{enumerate}[\rm(a)]\item\label{6.rmk.a} $\goth C+I_\goth g(\Psi)$ is a proper ideal of $R$,\item\label{6.rmk.b} $\operatorname{grade} I_\goth g(\Psi)\le \delta$, and \item\label{6.rmk.c} $I_{\goth g}(\Psi)\neq I_1(\tau)$.\end{enumerate}
\end{observation}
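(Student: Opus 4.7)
The strategy is to prove the three-cycle (a) $\Rightarrow$ (b) $\Rightarrow$ (c) $\Rightarrow$ (a). Before starting, I would use (\ref{c-2.3}) and (\ref{2.3.3.5}) to identify $\goth C$ explicitly in this setting: since $\goth g=1$ and $\delta$ is odd, both $D_{\frac{\delta-1}{2}}(G^*)$ and $\bigwedge^{\goth f}F\otimes\bigwedge^{\goth g}G$ are free of rank one, and a brief computation shows that $c(X_1^{(\frac{\delta-1}{2})})$ equals $\operatorname{Pf}(A)$ times a basis element, where $A$ is the alternating matrix representing $\mu(X_1)$ in a chosen basis of $F$. Thus $\goth C=(\operatorname{Pf}(A))$, and in these coordinates the generators of $I_{\goth g}(\Psi)$ are, up to sign, the entries of $AT$. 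Two containments will be used repeatedly: $I_{\goth g}(\Psi)\subseteq I_1(\tau)$ (the entries of $\Psi=\tau\circ\mu(X_1)$ are $R$-linear combinations of $T_1,\dots,T_{\goth f}$), and $\operatorname{Pf}(A)\cdot I_1(\tau)\subseteq I_{\goth g}(\Psi)$, which is (\ref{much-promised}).

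The implication (b) $\Rightarrow$ (c) is immediate from $\operatorname{grade} I_{\goth g}(\Psi)\le\delta<\goth f\le\operatorname{grade} I_1(\tau)$, which forces the two ideals to differ. For (c) $\Rightarrow$ (a), I argue contrapositively: if $\goth C+I_{\goth g}(\Psi)=R$, then $\operatorname{Pf}(A)$ is a unit modulo $I_{\goth g}(\Psi)$, so multiplying the relations $\operatorname{Pf}(A)\cdot T_j\in I_{\goth g}(\Psi)$ by an inverse of $\operatorname{Pf}(A)$ modulo $I_{\goth g}(\Psi)$ gives $T_j\in I_{\goth g}(\Psi)$ for every $j$; together with the reverse inclusion $I_{\goth g}(\Psi)\subseteq I_1(\tau)$ this forces $I_{\goth g}(\Psi)=I_1(\tau)$, contradicting (c).

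The main step is (a) $\Rightarrow$ (b), again by contraposition. Assume $\operatorname{grade} I_{\goth g}(\Psi)>\delta$; since $I_{\goth g}(\Psi)$ is generated by the $\goth f=\delta+1$ entries $\psi_1,\dots,\psi_{\goth f}$ of $\Psi$, its grade must equal $\goth f$ and these generators form an $R$-regular sequence. The identity $\tau\circ\Psi=0$ unfolds to $\sum_i T_i\psi_i=0$, which must therefore be a Koszul syzygy, so every $T_i$ lies in $I_{\goth g}(\Psi)$ and $I_1(\tau)=I_{\goth g}(\Psi)$. Writing $T_j=\sum_i c_{ji}\psi_i$ and substituting $\psi_i=\sum_k A_{ik}T_k$ produces $(I-CA)T=0$ with $C=(c_{ji})$. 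Since $T_1,\dots,T_{\goth f}$ is $R$-regular by Hypothesis~\ref{87hyp6}.\ref{87hyp6-b}, the only syzygies of $T$ are Koszul, so the entries of $I-CA$ lie in $I_1(\tau)=I_{\goth g}(\Psi)$, i.e., $CA\equiv I\pmod{I_{\goth g}(\Psi)}$. Taking determinants and invoking the Pfaffian identity $\det A=\operatorname{Pf}(A)^2$ shows that $\operatorname{Pf}(A)^2$, hence $\operatorname{Pf}(A)$ itself, is a unit modulo $I_{\goth g}(\Psi)$, so $\goth C+I_{\goth g}(\Psi)=R$, contradicting (a). The main difficulty lies precisely at this last step: one must recognize that the matrix congruence $CA\equiv I\pmod{I_{\goth g}(\Psi)}$, combined with $\det A=\operatorname{Pf}(A)^2$ and the elementary fact that a square being a unit in a commutative ring makes the element itself a unit, is exactly what collapses $\widetilde{R}$ to zero; once this and the identification $\goth C=(\operatorname{Pf}(A))$ are in hand, the rest is routine manipulation with regular sequences and Koszul relations.
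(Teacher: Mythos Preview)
Your argument is correct, and the implications (b) $\Rightarrow$ (c) and (c) $\Rightarrow$ (a) match the paper's reasoning essentially verbatim (the paper phrases them as $\lnot$(a) $\Rightarrow$ $\lnot$(c) and then $\lnot$(c) $\Rightarrow$ $\lnot$(b), but the content is the same use of (\ref{much-promised}) and the grade comparison).

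The genuine difference is in (a) $\Rightarrow$ (b). The paper does not argue with Pfaffians at all here; instead it invokes Theorem~\ref{main}.\ref{main.new-b}, already established at this point, to say that $\mathbb M^{\epsilon}$ with $\epsilon=\lceil\frac{\delta-1}{2}\rceil$ resolves $\widetilde R$, and then reads off from Remark~\ref{num-info-Med}.\ref{num-info-Med.a} that this resolution has length $\goth f-1$. The chain
\[
\delta\le \operatorname{grade} I_{\goth g}(\Psi)\le \operatorname{grade}(\goth C+I_{\goth g}(\Psi))\le \operatorname{pd}_R\widetilde R\le \goth f-1=\delta
\]
then forces equality. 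Your route is more elementary and entirely self-contained: you bypass the resolution $\mathbb M^{\epsilon}$ and instead extract the unit $\operatorname{Pf}(A)\bmod I_{\goth g}(\Psi)$ directly from two applications of the Koszul-syzygy property of regular sequences and the identity $\det A=\operatorname{Pf}(A)^2$. The paper's approach is a quick corollary of the heavy machinery already in place and showcases what the resolution buys; your approach would work even before that machinery is developed and makes explicit \emph{why} $\widetilde R$ collapses when the grade jumps.
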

\begin{proof}
Assume (\ref{6.rmk.a}) holds. Then Hypothesis~\ref{87hyp6}.\ref{87hyp6-a}, (\ref{above}), and Theorem~\ref{main}.\ref{main.new-b} together with (\ref{last}) give
$$\delta\le \operatorname{grade} I_\goth g(\Psi)\le\operatorname{grade} (\goth C+I_\goth g(\Psi))\le \operatorname{pd}\widetilde{R}\le \goth f-1=\delta;$$ hence $\operatorname{grade} I_{\goth g}(\Psi)=\delta$ and (\ref{6.rmk.b}) holds.  At this point,
$$\operatorname{grade} I_\goth g(\Psi)= \delta=\goth f-1<\goth f=\operatorname{grade} I_1(\tau)$$ and (\ref{6.rmk.c}) holds.

Assume (a) does not hold. Then, according to (\ref{much-promised}),
$$R=\goth C+I_\goth g(\Psi)\subseteq I_\goth g(\Psi):I_1(\tau),$$ so 
$I_1(\tau)\subseteq I_\goth g(\Psi)$. The other inclusion always holds (because of the definition of $\Psi$ in Data~\ref{87data6}); hence $I_\goth g(\Psi)=I_1(\tau)$ and (\ref{6.rmk.c}) does not hold. At this point, Hypothesis~\ref{87hyp6}.\ref{87hyp6-b} gives
$$\delta <\goth f\le \operatorname{grade} I_1(\tau)=\operatorname{grade} I_{\goth g}(\Psi),$$ and (\ref{6.rmk.b}) does not hold.
\end{proof}

\begin{proof3.5}
\medskip\noindent(\ref{main.b}) The length of ${\mathbb M^\epsilon}$, which is given in Remark~{\rm\ref{num-info-Med}.\ref{num-info-Med.a}},  is an upper bound on the projective dimension of $\operatorname{H}_0(\mathbb M^\epsilon)$. On the other hand, the last non-zero map of $\mathbb M^\epsilon$ is recorded in 
Observation~\ref{last-map}. We show that in each case there is at least one 
basis elements $u$ in ${\mathbb M^\epsilon}_{N_{\max}}$ and a proper ideal $J$ of $R$ with  with $$d_{N_{\max}}(u)\in J\cdot {\mathbb M^\epsilon}_{N_{\max}-1};$$and therefore, there is no shorter projective resolution of $\operatorname{H}_0(\mathbb M^\epsilon)$.
The last maps (\ref{4.6.2'}), (\ref{4.6.4'}), (\ref{4.6.5'}), and (\ref{4.6.6'}) all have the desired property because of the hypothesis that $I_1(\tau)$ is a proper ideal of $R$. 
Consider the last map (\ref{4.6.3'}) with the additional hypothesis that $\operatorname{grade} I_\goth g(\Psi)\le \delta$.
Observation~\ref{6.rmk} shows that $\goth C+I_{\goth g}(\Psi)$ is a proper ideal of $R$. The image of (\ref{4.6.3'}) is contained in 
$(\goth C+I_\goth g(\Psi)){\mathbb M^\epsilon}_{\goth f-2}$, since, according to (\ref{4.6.1}), 
$$ \xi(X_1^{\delta-1})=(-1)^{\epsilon-1}[D(\mu)](X_1^{(\frac{\delta+1}2)})\otimes X_1^{(\frac{\delta-1}2)}\in \goth C\cdot V^{\mathbb B}_{\goth f,\epsilon-1};$$ see (\ref{2.3.3.5}) for the final inclusion.
Now consider the last map (\ref{4.6.1'}). We treat the cases $2\le \goth g$ and $\goth g=1$ separately.
If $2\le \goth g$, then  (\ref{more**}) shows that the image of (\ref{4.6.1'}) is contained in  $I_1(\Psi)\cdot {\mathbb M^\epsilon}_0$; and this is sufficient because $I_1(\Psi)\subseteq I_1(\tau)$.
If $\goth g=1$, then we must impose the additional hypothesis that $\operatorname{grade} I_\goth g(\Psi)\le 
\delta$. Once the hypothesis is imposed, then Observation~\ref{6.rmk} again shows that $\goth C+I_\goth g(\Psi)$ is a proper ideal of $R$; and this is sufficient because the image of  (\ref{4.6.1'}) is $(\goth C+I_g(\Psi)){\mathbb M^\epsilon}_0$.

\medskip\noindent(\ref{PS},\ref{main.new-e}) 
Let $M$ be the $R$-module $\overline{R}$, if $\delta$ is even; or else $\widetilde{R}$, if $\delta$ is odd. Fix  ${\mathfrak p} 
\in \operatorname{Ass}_RM$. Assertion~(\ref{main.b})  shows that 
$$\operatorname{pd}_RM\le \goth f-1<\operatorname{grade} I_1(\tau)$$ and so the Auslander-Buchsbaum formula
$$\operatorname{depth} R_{\mathfrak p}=\operatorname{pd}_{R_{\mathfrak p}}M_{\mathfrak p}+ \operatorname{depth} M_{\mathfrak p}$$ yields that $\operatorname{depth} R_{\mathfrak p}=\operatorname{pd}_{R_{\mathfrak p}}M_{\mathfrak p} <\operatorname{grade} I_1(\tau)$; hence,  $I_1(\tau)$ cannot be contained in $\mathfrak p$.  
 Remark \ref{CMPS} now shows that the  ideals $I_\goth g(\Psi)_{\mathfrak p}$
and $(I_\goth g(\Psi)+\goth C)_{\mathfrak p}$
of $R_{\mathfrak p}$ are  perfect of grade $\delta$. It follows that 
$\operatorname{depth} R_{\mathfrak p}=\delta$.

\medskip\noindent(\ref{main.new-f}) If $\delta$ is even, then we proved in (\ref{PS}) that $I_{\goth g}(\Psi)$ is grade unmixed; so the present assertions are obvious. Let $\delta$ be odd. In light of (\ref{much-promised}) and \ref{gunm}.\ref{17.15.b} we have 
$$\goth C+I_{{\goth g}}(\Psi)\quad\subseteq \quad I_{{\goth g}}(\Psi):I_{1}(\tau)\quad\subseteq  \quad I_{{\goth g}}(\Psi):I_{1}(\tau)^\infty\quad\subseteq  \quad I_{{\goth g}}(\Psi)^{\rm{gunm}}.$$
On the other hand, assertion (\ref{main.new-e}), together with (\ref{much-promised}), shows that    $K=\goth C+I_{{\goth g}}(\Psi)$ is a grade unmixed ideal  with $I_{{\goth g}}(\Psi)\subseteq K$ and $$\operatorname{grade} K=\operatorname{grade} I_{{\goth g}}(\Psi)<\operatorname{grade} (I_{{\goth g}}(\Psi):K).$$ So, according to \ref{gunm}, once again,  $K=I_{{\goth g}}(\Psi)^{\text{gunm}}$.

\medskip\noindent(\ref{main.new-g}) This assertion follows immediately from 
(\ref{main.new-b}) and (\ref{main.new-f}). 
 \qed \end{proof3.5}

\begin{proof4} 
Fix $\epsilon$ equal to either $\frac \delta2$ or $\frac{\delta+1}2$.    Remark~\ref{num-info-Med}.\ref{num-info-Med.e} explains why there is a quasi-isomorphism from ${\mathbb M}^{\epsilon}$ to a $\goth g$-linear resolution when the component $R_0$ (of  homogeneous elements  of degree zero) in $R$ is a field. This completes the proof of (\ref{main.c.iii}). \qed\end{proof4}  

\begin{remark}\label{CM}  The proper ideal $I$ in a Noetherian ring $R$ is perfect if
$\operatorname{pd}_R\frac RI\le \operatorname{grade} I$. (See \ref{perfect} for a brief discussion of perfect ideals including their connection to the Cohen-Macaulay quotient rings.) It follows that, when all of the hypotheses of Theorem~\ref{main} are in effect, then \begin{enumerate}[\rm(a)]\item\label{CM.a}
$I_\goth g(\Psi)$ is a perfect ideal if and only if $\goth g=1$ and $\delta$ is even, and
\item
$I_\goth g(\Psi)^{\text{gunm}}$ is a perfect ideal if and only if\begin{enumerate}[\rm(i)]\item\label{CM.bi} 
 $I_\goth g(\Psi)$ is a perfect ideal, or \item\label{CM.bii} $\goth g=1$, $\delta$ is odd, and $\operatorname{grade} I_{\goth g}(\Psi)\le \delta$,  or \item\label{CM.biii} $\goth g=2$ and $\delta$ is odd, or
\item\label{CM.biv} $2\le g$ and $\delta=1$.\end{enumerate}\end{enumerate}
The ideals of (\ref{CM.a}) (and (\ref{CM.bi})) are called Huneke-Ulrich type two almost complete intersections; they had previously been resolved in \cite{K95}; see also (\ref{aci}). The ideals of (\ref{CM.bii}) are called Huneke-Ulrich deviation two Gorenstein ideals; they had previously been resolved in \cite{K86,S,K92}; see also, (\ref{gor}). 
The ideals of (\ref{CM.biii}) are discussed in (\ref{perfect-gens}). The ideals of (\ref{CM.biv}) are studied in \cite{J,M} (see also (\ref{**})); these ideals are principal.
 Most of the ideals $I_\goth g(\Psi)$ and $I_\goth g(\Psi)^{\text{gunm}}$ that are resolved in Theorem~\ref{main} are not perfect. None of these non-perfect ideals had been resolved previously.
\end{remark}

\section{The $h$-vector of $\operatorname{H}_0(\mathbb M^\epsilon)$.}\label{h-vect}
In this section we prove item (\ref{'23.19.c.ii}) of Theorem~\ref{main.c}. The proof is contained in Claims~\ref{.XNov6} and \ref{.X'Hope2}; see, in particular, Remark~\ref{.DONE}.
Two  identities (Lemmas \ref{.X'23.8} and \ref{.X'Cj25.1}) involving binomial coefficients are used in this proof. The proof of these identities is self-contained and may be found at the end of the section. We often consider binomial coefficients with negative parameters; see, for example, \cite[1.1 and 1.2]{K95} for a list  of the elementary properties of these objects.

We begin by establishing the first form of  the simplified Hilbert numerator $\operatorname{hn}_{\operatorname{H}_0(\mathbb M^\epsilon)}(s)$, as given in Theorem~\ref{main.c}.\ref{'23.19.c.ii}. Simultaneously, we calculate $\operatorname{hn}_{\operatorname{H}_0(\mathbb M^\epsilon)}(1)$ as given in  Theorem~\ref{main.c}.\ref{1.3.ii.2}.

\begin{claim}\label{.XNov6} Adopt the notation and hypotheses of Theorem~{\rm\ref{main.c}.\ref{'23.19.c.ii}}.
Then $$\operatorname{hn}_{\operatorname{H}_0(\mathbb M^\epsilon)}(s)=
\begin{cases}
\phantom{+}(1-s)^{\goth g}\sum\limits_{j\le \epsilon-1}
(-1)^{\delta+1}\binom{\goth g+j-1}{j}s^{2j+2\goth g-\goth f}
\\
+\sum\limits_{\ell=0}^{\goth g-2}\binom{\ell+\goth f-\goth g-1}{\ell}s^\ell\\
+\sum\limits_{\ell=0}^{\goth f-\goth g-2}(-1)^{\ell+\delta}\binom{\goth g+\ell-1}\ell s^{\ell+2\goth g-\goth f}
\end{cases}$$
and 
$$\operatorname{hn}_{\operatorname{H}_0(\mathbb M^\epsilon)}(1)=  
\sum\limits_{i=0}^{\lfloor \delta/2\rfloor}\binom{\goth f-2-2i}{\delta-2i},
$$
which is equal  to 
$$\begin{cases}
\text{the number of monomials of even degree at most $\delta$ in $\goth g-1$ variables,} &\text{if $\delta$ is even, or}\\
\text{the number of monomials of odd degree at most $\delta$ in $\goth g-1$ variables}, &\text{if $\delta$ is odd}.\\
\end{cases}$$
\end{claim}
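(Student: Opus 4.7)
The plan is to compute $\operatorname{HN}_{\operatorname{H}_0(\mathbb M^\epsilon)}(s)$ as the alternating sum of Hilbert numerators along the free $R$-resolution $\mathbb M^\epsilon$, factor out $(1-s)^{\goth f-\goth g}$, and match the quotient against the three displayed summands. The resolution is supplied by Theorem \ref{main}, and Remark \ref{num-info-Med}.\ref{num-info-Med.d} identifies the graded twists precisely: the summand $V_{I,J}^{\mathbb T^\epsilon}$ contributes $\binom{\goth f}{I}\binom{\goth g+J-1}{J}$ copies of $R(-\goth g+2-N)$ in homological degree $N=I+2J-\delta+2$, the summand $V_{i,j}^{\mathbb B}$ contributes $\binom{\goth f}{i}\binom{\goth g+j-1}{j}$ copies of $R(-\goth g+1-N)$ in homological degree $N=i+2j-\delta+1$, and $\mathbb M^\epsilon_0=R$. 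Collecting the alternating contributions yields $\operatorname{HN}_{\operatorname{H}_0(\mathbb M^\epsilon)}(s)=1+T(s)+B(s)$ with
\begin{align}
T(s)&=\sum_{J=\epsilon}^{\delta-1}\sum_{I=0}^{\delta-1-J}(-1)^{I+\delta}\binom{\goth f}{I}\binom{\goth g+J-1}{J}s^{\goth g+I+2J-\delta},\notag\\
B(s)&=\sum_{j=0}^{\epsilon-1}\sum_{i=\delta-j}^{\goth f}(-1)^{i+\delta+1}\binom{\goth f}{i}\binom{\goth g+j-1}{j}s^{\goth g+i+2j-\delta}.\notag
\end{align}

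Using $\sum_{i=0}^\goth f(-1)^i\binom{\goth f}{i}s^i=(1-s)^\goth f$, I would write the inner $i$-sum of $B(s)$ as $(1-s)^\goth f$ minus a head piece. The $(1-s)^\goth f$ contribution of $B(s)$, once a factor of $(1-s)^{\goth f-\goth g}$ is divided out, reduces to $(1-s)^\goth g\sum_{j\le\epsilon-1}(-1)^{\delta+1}\binom{\goth g+j-1}{j}s^{2j+2\goth g-\goth f}$ (using $s^{\goth g+2j-\delta}=s^{2j+2\goth g-\goth f}$), which is exactly the first line of the claimed $\operatorname{hn}_{\operatorname{H}_0(\mathbb M^\epsilon)}(s)$. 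The remaining pieces, namely the constant $1$, the complementary head of $B(s)$, and all of $T(s)$, must then consolidate, after the same division, into the two polynomial sums $\sum_{\ell=0}^{\goth g-2}\binom{\ell+\goth f-\goth g-1}{\ell}s^\ell$ and $\sum_{\ell=0}^{\goth f-\goth g-2}(-1)^{\ell+\delta}\binom{\goth g+\ell-1}{\ell}s^{\ell+2\goth g-\goth f}$.

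The main obstacle is this last consolidation. The approach I would follow is to fix the exponent $\ell$ of $s$, collect the coefficient of $s^\ell$ from each of the three leftover pieces, interchange the order of summation so that the outer index is $\ell$, and reduce the resulting inner alternating $\binom{\goth f}{\cdot}$-sums using Lemmas \ref{.X'23.8} and \ref{.X'Cj25.1}. Those identities are tailored to collapse precisely the $\binom{\goth f}{\cdot}\binom{\goth g+\cdot-1}{\cdot}$-alternating sums that arise here into the single binomials $\binom{\ell+\goth f-\goth g-1}{\ell}$ and $\binom{\goth g+\ell-1}{\ell}$ appearing in the claimed tail. Divisibility of $\operatorname{HN}_{\operatorname{H}_0(\mathbb M^\epsilon)}(s)$ by $(1-s)^{\goth f-\goth g}$ then emerges as a by-product: the identities confirm that the quotient is actually polynomial.

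The value $\operatorname{hn}_{\operatorname{H}_0(\mathbb M^\epsilon)}(1)$ follows at once from the first form: the factor $(1-s)^\goth g$ eliminates the first line, leaving $\sum_{\ell=0}^{\goth g-2}\binom{\ell+\goth f-\goth g-1}{\ell}+\sum_{\ell=0}^{\goth f-\goth g-2}(-1)^{\ell+\delta}\binom{\goth g+\ell-1}{\ell}$. A hockey-stick identity collapses the first of these, and a further application of Lemma \ref{.X'Cj25.1} to the alternating second sum rewrites the total as $\sum_{i=0}^{\lfloor\delta/2\rfloor}\binom{\goth f-2-2i}{\delta-2i}$. The combinatorial interpretation then drops out of the identity $\binom{\goth f-2-2i}{\delta-2i}=\binom{(\delta-2i)+(\goth g-2)}{\goth g-2}$, which counts monomials of degree $\delta-2i$ in $\goth g-1$ variables: as $i$ ranges over $0,1,\ldots,\lfloor\delta/2\rfloor$ the degrees $\delta,\delta-2,\delta-4,\ldots$ enumerate exactly the monomials of degree at most $\delta$ in $\goth g-1$ variables whose degree has the same parity as $\delta$.
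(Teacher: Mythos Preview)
Your approach is essentially the paper's own: compute $\operatorname{HN}$ from the resolution, split off the $(1-s)^{\goth f}$ piece coming from the $\mathbb B$-summands (the paper does this by an add-and-subtract trick between the $\mathbb T$- and $\mathbb B$-sums, you do it by writing the inner $i$-sum as $(1-s)^{\goth f}$ minus a head; the resulting leftover is identical), and then verify the key polynomial identity that the leftover equals $(1-s)^{\goth f-\goth g}$ times the two displayed sums via Lemmas~\ref{.X'23.8} and~\ref{.X'Cj25.1}.

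One small correction on the $s=1$ step: Lemma~\ref{.X'Cj25.1} is not what collapses the alternating second sum. After the hockey-stick gives $\binom{\goth f-2}{\goth g-2}$, the paper absorbs this into the alternating sum by extending its range to $0\le\ell\le\goth f-\goth g$ (using $\binom{\goth f-2}{\goth g-2}=\binom{\goth f-1}{\goth f-\goth g}-\binom{\goth f-2}{\goth f-\goth g-1}$), substitutes $k=\delta-\ell$, and then pairs consecutive terms via Pascal to obtain $\sum_i\binom{\goth f-2-2i}{\delta-2i}$; no appeal to Lemma~\ref{.X'Cj25.1} is needed or helpful there.
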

\begin{proof}
If $0\to F_p\to \dots\to F_1\to F_0\to M\to 0$ is a homogeneous resolution of the graded $R$-module $M$ by finitely generated free $R$-modules, with $F_i=\bigoplus_{j=1}^{\operatorname{rank} F_i}R(-\beta_{i,j})$, then the Hilbert Series of $M$ is
$\operatorname{HS}_M(s)=\operatorname{HS}_R(s)\cdot \operatorname{HN}_M(s)$, for 
$$\operatorname{HN}_M(s)=\sum_{i=0}^p(-1)^i\sum_{j=1}^{\operatorname{rank} F_i} s^{\beta_{i,j}}.$$
Use the resolution of $\operatorname{H}_0(\mathbb M^\epsilon)$  given in Definition~\ref{doodles'}.\ref{doodles'.a} and Remark~\ref{num-info-Med}.\ref{num-info-Med.c} to see that
\begingroup\allowdisplaybreaks\begin{align}\operatorname{HN}_{\operatorname{H}_0(\mathbb M^\epsilon)}(s)=&1+\begin{cases}
\phantom{+}
\sum\limits_{\gf{j\le \epsilon -1}{\delta\le i+j}}(-1)^{i-\delta+1}
\operatorname{rank} V_{i,j}^{{\mathbb B}}s^{i+2j+2\goth g-\goth f}\\
+
\sum\limits_{\gf{\epsilon\le j}{i+j\le \delta-1}}(-1)^{i-\delta}
\operatorname{rank} V_{i,j}^{{\mathbb T}}s^{i+2j+2\goth g-\goth f}
\end{cases}
\notag\\
=&1+\begin{cases}
\phantom{+}
\sum\limits_{\gf{j\le \epsilon-1}{\delta\le i+j}}(-1)^{i-\delta+1}
\binom {\goth f} i
\binom{\goth g+j-1}{j}
s^{i+2j+2\goth g-\goth f}\\
+
\sum\limits_{\gf{\epsilon\le j}{i+j\le \delta-1}}(-1)^{i-\delta}
\binom{\goth f}i\binom{\goth g+j-1}{j}s^{i+2j+2\goth g-\goth f}.
\end{cases}
\label{.X'Nov.3}\end{align}
\endgroup
Add 
\begin{equation}\label{.X'Nov.3a}
\sum\limits_{\left\{i,j\left|\gf{j\le \epsilon-1}{i+j<\delta}\right.\right\}}(-1)^{i-\delta+1}
\binom {\goth f} i
\binom{\goth g+j-1}{j}
s^{i+2j+2\goth g-\goth f}
\end{equation} to the top  summand on the far right side of (\ref{.X'Nov.3}) and subtract (\ref{.X'Nov.3a}) from the bottom  summand on the far right side of (\ref{.X'Nov.3a}) to see that 
\begin{equation}\label{.X'Nov.3b}\operatorname{HN}_{\operatorname{H}_0(\mathbb M^\epsilon)}(s)=1+\begin{cases}
\phantom{+} 
\sum\limits_{\{i,j|j\le \epsilon -1\}}(-1)^{i-\delta+1}
\binom {\goth f} i
\binom{\goth g+j-1}{j}
s^{i+2j+2\goth g-\goth f}
\\
+
\sum\limits_{\{i,j|i+j\le \delta -1\}}(-1)^{i-\delta}
\binom{\goth f}i\binom{\goth g+j-1}{j}s^{i+2j+2\goth g-\goth f}.
\end{cases}\end{equation}
It is clear that the top summand on the far right side of (\ref{.X'Nov.3b}) is 
$$\textstyle
(1-s)^{\goth f}(-1)^{\delta+1}\Big[ \sum\limits_{\{j|j\le \epsilon-1\}}
\binom{\goth g+j-1}{j}s^{2j}\Big]s^{2\goth g-\goth f}
;$$
 and therefore, 
\begin{equation}\label{.X'Nov.3c}\operatorname{HN}_{\operatorname{H}_0(\mathbb M^\epsilon)}(s)=1+\begin{cases}
\phantom{+} 
(1-s)^{\goth f}
\sum\limits_{\{j|j\le \epsilon-1\}}(-1)^{\delta+1}
\binom{\goth g+j-1}{j}s^{2j+2\goth g-\goth f}
\\
+
\sum\limits_{\{i,j|i+j\le \delta -1\}}(-1)^{i-\delta}
\binom{\goth f}i\binom{\goth g+j-1}{j}s^{i+2j+2\goth g-\goth f}.
\end{cases}\end{equation}
We next prove that \begin{equation}\label{.X'Hope1}\begin{array}{ll}
&1
+
\sum\limits_{i+j\le \delta -1}(-1)^{i-\delta}
\binom{\goth f}i\binom{\goth g+j-1}{j}s^{i+2j+2\goth g-\goth f}
\\
=&(1-s)^{\goth f-\goth g}\Big(  
\sum\limits_{\ell=0}^{\goth g-2}\binom{\ell+\goth f-\goth g-1}{\ell}s^\ell
+\sum\limits_{\ell=0}^{\goth f-\goth g-2}(-1)^{\ell+\delta}\binom{\goth g+\ell-1}\ell
s^{\ell+2\goth g-\goth f}\Big).\end{array} \end{equation}
 Once (\ref{.X'Hope1}) 
is accomplished, then we will have shown that
\begin{equation}\label{.Xalmost-there}\operatorname{HN}_{\operatorname{H}_0(\mathbb M^\epsilon)}(s)=(1-s)^{\goth f-\goth g} \begin{cases}
\phantom{+} 
(1-s)^{\goth g}
\sum\limits_{\{j|j\le \epsilon-1\}}
(-1)^{\delta+1}\binom{\goth g+j-1}{j}s^{2j+2\goth g-\goth f}
\\
+\sum\limits_{\ell=0}^{\goth g-2}\binom{\ell+\goth f-\goth g-1}{\ell}s^\ell\\
+\sum\limits_{\ell=0}^{\goth f-\goth g-2}(-1)^{\ell+\delta}\binom{\goth g+\ell-1}\ell
s^{\ell+2\goth g-\goth f}.
\end{cases}\end{equation}

Observe that  \begingroup\allowdisplaybreaks\begin{align}{\rm (\ref{.X'Hope1})}&\iff 1=
\begin{cases}  
\phantom{+}(1-s)^{\goth f-\goth g}\sum\limits_{\ell=0}^{\goth g-2}\binom{\ell+\goth f-\goth g-1}{\ell}s^\ell\\
+(1-s)^{\goth f-\goth g}\sum\limits_{\ell=0}^{\goth f-\goth g-2}(-1)^{\ell+\delta}\binom{\goth g+\ell-1}\ell s^{\ell+2\goth g-\goth f}\\
-\sum\limits_{i+j\le \delta -1}(-1)^{i-\delta}
\binom{\goth f}i\binom{\goth g+j-1}{j}s^{i+2j+2\goth g-\goth f}\end{cases}\notag\\
&\iff 1=
\begin{cases}  
\phantom{+}\sum\limits_{k}\ \sum\limits_{\ell\le \goth g-2}(-1)^k\binom{\goth f-\goth g}k
\binom{\ell+\goth f-\goth g-1}{\ell}s^{k+\ell}\\
+\sum\limits_{k}\sum\limits_{\ell\le \goth f-\goth g-2}(-1)^{k+\ell+\delta}\binom{\goth f-\goth g}k \binom{\goth g+\ell-1}\ell s^{k+\ell+2\goth g-\goth f}\\
+\sum\limits_{k+\ell\le \goth f-\goth g -1}(-1)^{k+\delta+1}
\binom{\goth f}k\binom{\goth g+\ell-1}{\ell}s^{k+2\ell+2\goth g-\goth f}\end{cases}\notag\\
&\iff
1=
\begin{cases}  
\phantom{+}\sum\limits_{0\le L\le \goth f-2}\ \sum\limits_{\ell\le \goth g-2}(-1)^{L-\ell}\binom{\goth f-\goth g}{L-\ell}
\binom{\ell+\goth f-\goth g-1}{\ell}s^{L}\\
+\sum\limits_{2\goth g-\goth f\le L\le \goth f-2}\ \sum\limits_{\ell\le \goth f-\goth g-2}(-1)^{L+\goth g}\binom{\goth f-\goth g}{L-2\goth g+\goth f-\ell} \binom{\goth g+\ell-1}\ell s^{L}\\
+\sum\limits_{2\goth g-\goth f\le L\le \goth f-2}\ \sum\limits_{
L-\goth g +1\le \ell}
(-1)^{L+\goth g+1}
\binom{\goth f}{L-2\ell-2\goth g+\goth f}\binom{\goth g+\ell-1}{\ell}s^{L}\end{cases}\notag\\
&\iff
1=
\begin{cases}  
\phantom{+}\sum\limits_{L\le \goth f-2}\ \sum\limits_{\ell\le \goth g-2}(-1)^{L-\ell}\binom{\goth f-\goth g}{L-\ell}
\binom{\ell+\goth f-\goth g-1}{\ell}s^{L}\\
+\sum\limits_{L\le f-2}\ \sum\limits_{\ell\le \goth f-\goth g-2}(-1)^{L+\goth g}\binom{\goth f-\goth g}{L-2\goth g+\goth f-\ell} \binom{\goth g+\ell-1}\ell s^{L}\\
+\sum\limits_{L\le f-2}\ \sum\limits_{
L-\goth g +1\le \ell}
(-1)^{L+\goth g+1}
\binom{\goth f}{L-2\ell-2\goth g+\goth f}\binom{\goth g+\ell-1}{\ell}s^{L}.\end{cases}\notag\end{align}\endgroup
Look at one coefficient at a time,  multiply both sides of the equation by $(-1)^L$, and recall the meaning of $\chi(S)$ from (\ref{chi}), in order to see that 
$${\rm (\ref{.X'Hope1})}\iff
\chi(L=0)=\begin{cases} 
\phantom{+}\sum\limits_{\ell\le \goth g-2}(-1)^{\ell}\binom{\goth f-\goth g}{L-\ell}
\binom{\ell+\goth f-\goth g-1}{\ell}\\
+\sum\limits_{\ell\le \goth f-\goth g-2}(-1)^{\goth g}\binom{\goth f-\goth g}{\goth g-L+\ell} \binom{\goth g+\ell-1}\ell \\
+\sum\limits_{L-\goth g +1\le \ell}
(-1)^{\goth g+1}
\binom{\goth f}{2\ell+2\goth g-L}\binom{\goth g+\ell-1}{\ell},\end{cases}$$
for each  integer $L$, with $L\le \goth f-2$. Observe that
\begingroup\allowdisplaybreaks\begin{align}
&\textstyle\sum\limits_{\ell\le \goth g-2}(-1)^{\ell}\binom{\goth f-\goth g}{L-\ell}
\binom{\ell+\goth f-\goth g-1}{\ell}
\notag\\
=&\textstyle\sum\limits_{\ell\le \goth g-2}(-1)^{\ell}
\binom{\ell+\goth f-\goth g-1}{\ell}\binom{\goth f-\goth g}{\goth f-\goth g-L+\ell},&&\text{because $0\le \goth f-\goth g$,}\notag\\
=&\textstyle
(-1)^{\goth g+L+\goth f}\sum\limits_{m\le \goth f-L-2}(-1)^{m}
\binom{L-1+m}{\goth g+L-\goth f+m}\binom{\goth f-\goth g}{m},
&&\text{where $m=\goth f-\goth g-L+\ell$,}
\notag\\
=&\textstyle(-1)^{\goth g+L+\goth f}(-1)^{\goth f-L}\binom{L-1}{\goth g-2}\binom{\goth f-2}{\goth f-L-2},&&\hskip-4pt\begin{array}{l}\text{by  (\ref{.X'23.9}) with $J=\goth f-L-1$,}\\
\text{$G=\goth g-1$ and $F=\goth f-2$,}\end{array}\notag\\
=&\textstyle(-1)^{\goth g}\binom{\goth f-2}{L}\binom{L-1}{\goth g-2},&&\text{because $0\le \goth f-2$.}\notag \end{align}\endgroup
Formula (\ref{.X'23.9}) requires that $1\le L$. On the other hand, it is easy to see that 
$$\textstyle\sum\limits_{\ell\le \goth g-2}(-1)^{\ell}\binom{\goth f-\goth g}{L-\ell}
\binom{\ell+\goth f-\goth g-1}{\ell}= \textstyle(-1)^{\goth g}\binom{\goth f-2}{L}\binom{L-1}{\goth g-2},$$ whenever $L\le 0$.
 It follows that
$${\rm (\ref{.X'Hope1})}\iff
\chi(L=0)=\begin{cases} 
\phantom{+}(-1)^{\goth g}\binom{\goth f-2}{L}\binom{L-1}{\goth g-2}\\
+\sum\limits_{\ell\le \goth f-\goth g-2}(-1)^{\goth g}\binom{\goth f-\goth g}{\goth g-L+\ell} \binom{\goth g+\ell-1}\ell \\
+\sum\limits_{L-\goth g +1\le \ell}
(-1)^{\goth g+1}
\binom{\goth f}{2\ell+2\goth g-L}\binom{\goth g+\ell-1}{\ell},\end{cases}$$ for each integer $L$ with $L\le \goth f-2$.
According to Lemma~\ref{.X'Cj25.1}, the right side of the previous display holds. It follows that (\ref{.X'Hope1}) and (\ref{.Xalmost-there}) also hold. We finish the proof of Claim~\ref{.XNov6} by calculating the value of
\begin{equation}\label{.*}\frac{\operatorname{HN}_{\operatorname{H}_0(\mathbb M^\epsilon)}(s)}{(1-s)^{\goth f-\goth g}}=\begin{cases}
\phantom{+} 
(1-s)^{\goth g}\sum\limits_{\{j|j\le \epsilon-1\}}
(-1)^{\delta+1}\binom{\goth g+j-1}{j}s^{2j+2\goth g-\goth  f}
\\
+\sum\limits_{\ell=0}^{\goth g-2}\binom{\ell+\goth f-\goth g-1}{\ell}s^\ell\\
+\sum\limits_{\ell=0}^{\goth f-\goth g-2}(-1)^{\ell+\delta}\binom{\goth g+\ell-1}\ell s^{\ell+2\goth g-\goth f}
\end{cases}\end{equation} at $s=1$ and observing that this value is not zero. The
value is
\begingroup \allowdisplaybreaks \begin{align}&\textstyle\sum\limits_{\ell=0}^{\goth g-2}\binom{\ell+\goth f-\goth g-1}{\ell}
+\sum\limits_{\ell=0}^{\goth f-\goth g-2}(-1)^{\ell+\delta}\binom{\goth g+\ell-1}\ell
\notag\\=&\textstyle\binom{\goth f-2}{\goth g-2}+\sum\limits_{\ell=0}^{\goth f-\goth g-2}(-1)^{\ell+\delta}\binom{\goth g+\ell-1}\ell\notag\\
=&\textstyle\sum\limits_{\ell=0}^{\goth f-\goth g}(-1)^{\ell+\delta}\binom{\goth g+\ell-1}\ell,&&\textstyle\text{because $\binom{\goth f-2}{\goth g-2}=-\binom{\goth f-2}{\goth f-\goth g-1}+\binom{\goth f-1}{\goth f-\goth g}$,}\notag\\
=&\textstyle\sum\limits_{k=0}^{\delta}(-1)^{k}\binom{\goth f-1-k}{\delta-k},&&\textstyle\text{where $k=\delta-\ell$,}\notag\\
=&\textstyle\sum\limits_{0\le k}(-1)^{k}\binom{\goth f-1-k}{\delta-k},&&\textstyle\text{because $\binom{\goth f-1-k}{\delta-k}=0$ for $\delta<k$,}\notag\\
=&\textstyle\sum\limits_{0\le i}\Big[\binom{\goth f-1-2i}{\delta-2i}-\binom{\goth f-1-(2i+1)}{\delta-(2i+1)}\Big],&&\textstyle\text{write $k=2i$ or $k=2i+1$,}\notag\\
=&\textstyle\sum\limits_{0\le i}\binom{\goth f-2-2i}{\delta-2i}\notag\\
=&\textstyle\sum\limits_{i=0}^{\lfloor \delta/2\rfloor}\binom{\goth f-2-2i}{\delta-2i},\notag
\end{align}\endgroup
which equals 
$$\begin{cases}
\text{the number of monomials of even degree at most $\delta$ in $\goth g-1$ variables,} &\text{if $\delta$ is even, or}\\
\text{the number of monomials of odd degree at most $\delta$ in $\goth g-1$ variables}, &\text{if $\delta$ is odd}.\\
\end{cases}$$
This completes the proof of Claim~\ref{.XNov6}.
\end{proof}

\begin{remark}\label{6.3}This remark is a continuation of Remark~\ref{rmk}.b. If $\goth g=1$ and $\goth f$ is even, then there are no monomials of odd degree in $0$ variables. On the other hand, one can easily verify that the right side of (\ref{.*}) is $1-s$.
\end{remark}

\begin{claim}\label{.X'Hope2} The polynomial $\operatorname{hn}_{\operatorname{H}_0(\mathbb M^\epsilon)}(s)$, as calculated in Claim~{\rm\ref{.XNov6}}, is also equal to
$$\textstyle \sum\limits_{\ell=0}^{\goth g-1}\binom{\ell+\goth f-\goth g-1}\ell s^\ell-\chi(\epsilon=\frac{\delta-1}2)\binom{\goth g+\epsilon-1}{\epsilon
}s^{\goth g-1}+\sum\limits_{\ell=\goth g}^{q(\goth g,\goth f)} 
\sum\limits_{j\le
\epsilon
-1}(-1)^{\ell+\goth g+1}\binom{\goth g}{\ell-2\goth g+\goth f-2j}
\binom{\goth g+j-1}{j}s^{\ell},$$ for $$q(\goth g,\goth f)=\begin{cases}
2\goth g-3,&\text{if $\epsilon=\frac{\delta-1}2$,}\\
2\goth g-2,&\text{if $\epsilon=\frac{\delta}2$, and}\\2\goth g-1,&\text{if $\epsilon=\frac{\delta+1}2$}.\end{cases}$$
\end{claim}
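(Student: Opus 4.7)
The plan is to prove the equality of the two formulas for $\operatorname{hn}_{\operatorname{H}_0(\mathbb M^\epsilon)}(s)$ by expanding $(1-s)^{\goth g}=\sum_{k=0}^{\goth g}(-1)^k\binom{\goth g}{k}s^k$ in the first summand of the formula of Claim~\ref{.XNov6} and then comparing the coefficient of $s^\ell$ in each expression, for every integer $\ell$. Collecting like powers and using $\delta=\goth f-\goth g$ to reduce the sign $(-1)^{\delta+1+\ell+\goth f-2\goth g-2j}$ to $(-1)^{\ell+\goth g+1}$, the first summand contributes
$$P_1(\ell)=\sum_{j\le \epsilon-1}(-1)^{\ell+\goth g+1}\binom{\goth g+j-1}{j}\binom{\goth g}{\ell+\goth f-2\goth g-2j},$$
which already agrees term-for-term with the Part~C summand indexed by $\goth g\le \ell\le q(\goth g,\goth f)$ of the second formula. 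The second summand contributes $\binom{\ell+\goth f-\goth g-1}{\ell}$ for $0\le \ell\le \goth g-2$, and after the reindexing $L=\ell+2\goth g-\goth f$ the third summand contributes $P_3(L)=(-1)^{L+\goth g}\binom{L+\delta-1}{L+\goth f-2\goth g}$ for $2\goth g-\goth f\le L\le \goth g-2$.

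The computational core is the binomial identity
$$\sum_{j\ge 0}\binom{\goth g+j-1}{j}\binom{\goth g}{m-2j}=\binom{\goth g+m-1}{m},$$
valid for all $m\ge 0$, which I would obtain by observing that both sides equal the coefficient of $x^m$ in the formal power series
$$\frac{(1+x)^{\goth g}}{(1-x^2)^{\goth g}}=\frac{1}{(1-x)^{\goth g}},$$
or else by invoking Lemma~\ref{.X'23.8}. A case analysis on $\ell$ then finishes the proof. For $\goth g\le \ell\le q(\goth g,\goth f)$ only $P_1(\ell)$ contributes, and it already matches the corresponding Part~C term; a direct check carried out for each of the three possible values of $\epsilon$ shows $P_1(\ell)=0$ for $\ell>q(\goth g,\goth f)$, because the truncation $j\le \epsilon-1$ forces $\ell+\goth f-2\goth g-2j>\goth g$. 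For $0\le\ell\le\goth g-2$, applying the identity with $m=\ell+\goth f-2\goth g$ yields $P_1(\ell)+P_3(\ell)=0$ (the truncation captures all nonzero terms, since $\binom{\goth g}{m-2j}$ vanishes once $2j>m$ and $m\le\delta-2$ in this range), so the coefficient of $s^\ell$ reduces to $\binom{\ell+\goth f-\goth g-1}{\ell}$, matching Part~A.

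The main obstacle is the single coefficient $\ell=\goth g-1$, where Parts~2 and~3 both vanish while the second formula calls for $\binom{\goth f-2}{\goth g-1}-\chi(\epsilon=\frac{\delta-1}{2})\binom{\goth g+\epsilon-1}{\epsilon}$. Here the truncation at $j=\epsilon-1$ becomes delicate. Applying the identity with $m=\delta-1$ yields the untruncated sum $\sum_{j\ge 0}\binom{\goth g+j-1}{j}\binom{\goth g}{\delta-1-2j}=\binom{\goth g+\delta-2}{\delta-1}=\binom{\goth f-2}{\goth g-1}$, so it remains only to compare the truncated sum $P_1(\goth g-1)$ with this value. When $\epsilon=\lceil\delta/2\rceil$, the largest $j$ making $\binom{\goth g}{\delta-1-2j}\ne 0$ is $\lfloor(\delta-1)/2\rfloor=\epsilon-1$, so no term is dropped and $P_1(\goth g-1)=\binom{\goth f-2}{\goth g-1}$, in agreement with the absent $\chi$-term. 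When $\epsilon=(\delta-1)/2$ (forcing $\delta$ odd), exactly the single term $j=(\delta-1)/2=\epsilon$ is dropped, whose value is $\binom{\goth g+\epsilon-1}{\epsilon}\cdot\binom{\goth g}{0}=\binom{\goth g+\epsilon-1}{\epsilon}$; this is precisely the subtraction encoded by the $\chi$-term, which completes the proof.
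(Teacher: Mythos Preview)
Your proof is correct and follows essentially the same route as the paper's own argument: both proofs expand $(1-s)^{\goth g}$ in the first formula, reduce the comparison to the generating-function identity
\[
\sum_{j\ge 0}\binom{\goth g+j-1}{j}\binom{\goth g}{m-2j}=\binom{\goth g+m-1}{m}
\quad\text{via}\quad
\frac{(1+x)^{\goth g}}{(1-x^2)^{\goth g}}=\frac{1}{(1-x)^{\goth g}},
\]
and handle the truncation $j\le\epsilon-1$ by observing when it is redundant and when it drops exactly the single term $j=\epsilon$. The only difference is organizational: the paper subtracts one formulation from the other and simplifies the resulting expression globally, whereas you compare the coefficient of $s^\ell$ case by case; your treatment of the boundary case $\ell=\goth g-1$ is cleaner and more explicit than the paper's. (One minor quibble: Lemma~\ref{.X'23.8} is not a direct source for this particular identity, so the parenthetical ``or else by invoking Lemma~\ref{.X'23.8}'' should be dropped; your generating-function argument already suffices.)
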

\begin{remark}\label{.DONE}Once Claim~\ref{.X'Hope2} is established, then the proof of Theorem~\ref{main.c}.\ref{'23.19.c.ii} is complete. Indeed, the first form of $\operatorname{hn}_{\operatorname{H}_0(\mathbb M^\epsilon)}(s)$ is established in Claim~\ref{.XNov6}, the second form in Claim~\ref{.X'Hope2},
 Theorem~\ref{main.c}.\ref{1.3.ii.1} is a routine statement which is included to set the meaning of the notation in the reader's mind,   Theorem~\ref{main.c}.\ref{1.3.ii.1+} maybe read directly from Claim~\ref{.X'Hope2}, and Theorem~\ref{main.c}.\ref{1.3.ii.2} is established in Claim~\ref{.XNov6}.
\end{remark}

\begin{proof}
Expand $(1-s)^\goth g$ and observe that
$$  \textstyle\begin{array}{ll}
&(1-s)^\goth g\sum\limits_{j\le \epsilon-1}(-1)^{\delta+1}
\binom{\goth g+j-1}{j}s^{2j+2\goth g-\goth f}\\
=&\sum\limits_{\ell=2\goth g-\goth f}^{q(\goth g,\goth f)} 
\sum\limits_{j\le\epsilon -1}(-1)^{\ell+\goth g+1}\binom{\goth g}{\ell-2\goth g+\goth f-2j}
\binom{\goth g+j-1}{j}s^{\ell}.\end{array}$$
The difference between the two formulations for $\operatorname{hn}_{\operatorname{H}_0(\mathbb M^\epsilon)}(s)$ is 
\begingroup\allowdisplaybreaks \begin{align} 
&\begin{cases} \phantom{+}\text{the formulation of Claim~\ref{.XNov6}} \\\hline- \text{the formulation of Claim~\ref{.X'Hope2}}\end{cases}\notag\\
=&\begin{cases}  
\phantom{+}\sum\limits_{\ell=2\goth g-\goth f}^{q(\goth g,\goth f)} 
\sum\limits_{j\le\epsilon-1}(-1)^{\ell+\goth g+1}\binom{\goth g}{\ell-2\goth g+\goth f-2j}
\binom{\goth g+j-1}{j}s^{\ell}\\
+\sum\limits_{\ell=0}^{\goth g-2}\binom{\ell+\goth f-\goth g-1}{\ell}s^\ell\\
+\sum\limits_{\ell=2\goth g-\goth f}^{\goth g-2}(-1)^{\ell+\goth g}\binom{\ell-\goth g+\goth f-1}{\ell-2\goth g+\goth f} s^{\ell}\\\hline
-\sum\limits_{\ell=0}^{\goth g-1}\binom{\ell+\goth f-\goth g-1}\ell s^\ell\\
+\chi(\epsilon=\frac{\delta-1}2)\binom{\goth g+\epsilon-1}{\epsilon
}s^{\goth g-1}\\
-\sum\limits_{\ell=\goth g}^{q(\goth g,\goth f)} 
\sum\limits_{j\le
\epsilon
-1}(-1)^{\ell+\goth g+1}\binom{\goth g}{\ell-2\goth g+\goth f-2j}
\binom{\goth g+j-1}{j}s^{\ell}\\
\end{cases}
\notag\\
=&\begin{cases}  
\phantom{+}\sum\limits_{\ell=2\goth g-\goth f}^{\goth g-1} 
\sum\limits_{j\le \epsilon-1}(-1)^{\ell+\goth g+1}\binom{\goth g}{\ell-2\goth g+\goth f-2j}
\binom{\goth g+j-1}{j}s^{\ell}\\
+\chi(\epsilon=\frac{\delta-1}2)\binom{\goth g+\epsilon-1}{\epsilon
}s^{\goth g-1}\\
+\sum\limits_{\ell=2\goth g-\goth f}^{\goth g-2}(-1)^{\ell+\goth g}\binom{\ell-\goth g+\goth f-1}{\ell-2\goth g+\goth f} s^{\ell} \\
-\sum\limits_{\ell=\goth g-1}\binom{\ell+\goth f-\goth g-1}\ell s^\ell.
\end{cases}\label{.Xtag-me}
\end{align}\endgroup
The  bottom two summands of (\ref{.Xtag-me}) combine to form
$$\textstyle \sum\limits_{\ell=2\goth g-\goth f}^{\goth g-1}(-1)^{\ell+\goth g} \binom{\ell-\goth g+\goth f-1}{\ell-2\goth g+\goth f} s^{\ell}.$$Keep in mind that $\epsilon$ is equal to either $\frac{\delta-1}2$ or $\lceil\frac \delta 2\rceil$. We split the top summand of (\ref{.Xtag-me}) into two pieces  and see that (\ref{.Xtag-me}) is
\begin{align}
=&\begin{cases}  
\phantom{+}\sum\limits_{\ell=2\goth g-\goth f}^{\goth g-1} 
\sum\limits_{j\le \lceil\frac \delta 2\rceil-1}(-1)^{\ell+\goth g+1}\binom{\goth g}{\ell-2\goth g+\goth f-2j}
\binom{\goth g+j-1}{j}s^{\ell}\\
-\chi(\epsilon=\frac{\delta-1}2)\sum\limits_{\ell=2\goth g-\goth f}^{\goth g-1} 
\sum\limits_{j= \epsilon}(-1)^{\ell+\goth g+1}\binom{\goth g}{\ell-2\goth g+\goth f-2j}
\binom{\goth g+j-1}{j}s^{\ell}\\
+\chi(\epsilon=\frac{\delta-1}2)\binom{\goth g+\epsilon-1}{\epsilon
}s^{\goth g-1}\\
+\sum\limits_{\ell=2\goth g-\goth f}^{\goth g-1}(-1)^{\ell+\goth g}\binom{\ell-\goth g+\goth f-1}{\ell-2\goth g+\goth f} s^{\ell}. \\
\end{cases}\label{.Xtag-me2}
\end{align}
The second and third summands of (\ref{.Xtag-me2}) add to zero because in the second summand if $\ell\le \goth g-2$, then $\ell-2\goth g+\goth f-2j\le -1$ and $\binom{\goth g}{\ell-2\goth g+\goth f-2j}=0$. Thus,  the difference between the two formulations of  
for $\operatorname{hn}_{\operatorname{H}_0(\mathbb M^\epsilon)}(s)$ is 
 \begin{equation}\label{.Xlast-eq}\textstyle \sum\limits_{\ell=2\goth g-\goth f}^{\goth g-1} (-1)^{\ell+\goth g}
\Big[-\sum\limits_{j\le\lceil\frac{\delta}2\rceil-1}\binom{\goth g}{\ell-2\goth g+\goth f-2j}
\binom{\goth g+j-1}{j}+  \binom{\ell-\goth g+\goth f-1}{\ell-2\goth g+\goth f}\Big]s^{\ell}.\end{equation}
The constraint $j\le \lceil\frac{\delta}2\rceil-1$ in (\ref{.Xlast-eq}) is redundant because if $\lceil\frac{\delta}2\rceil \le j$, then 
$$\ell-2\goth g+\goth f-2j\le (\goth g-1)-2\goth g+\goth f-(\goth f-\goth g)=-1$$ and $\binom{\goth g}{\ell-2\goth g+\goth f-2j}=0$. It follows that
$$\textstyle \sum\limits_{j\le\lceil\frac{\delta}2\rceil-1}\binom{\goth g}{\ell-2\goth g+\goth f-2j}\binom{\goth g+j-1}{j}=\sum\limits_{j\in \mathbb Z}\binom{\goth g}{\ell-2\goth g+\goth f-2j}\binom{\goth g+j-1}{j};$$ and this number 
is the coefficient of $x^{\ell-2\goth g+\goth f}$ in
$$(1+x)^\goth g\frac{1}{(1-x^2)^\goth g}=\frac{1}{(1-x)^\goth g}=\sum\limits_{j=0}^{\infty}\binom{\goth g+j-1}{j}x^j.$$ This coefficient is $\binom{\ell-\goth g+f-1}{\ell-2\goth g+f}$; the quantity in (\ref{.Xlast-eq}) is zero; and the proof is complete.
\end{proof}

\bigskip We  used the identities (\ref{.X'23.9})  and (\ref{.X25.1gts})  in the proof of Claims \ref{.XNov6} and \ref{.X'Hope2}. We now prove these identities. Recall that we often consider binomial coefficients with negative parameters and that  \cite[1.1 and 1.2]{K95} contains  a list  of the elementary properties of these objects. In particular, we often use  \cite[1.2.g]{K95}, which says that if $a$, $b$, and $c$ are integers, then
\begin{equation}\label{.X'1.2.g}\textstyle 0\le a\implies\sum\limits_{k\in \mathbb Z}(-1)^k\binom{b+k}{c+k}\binom ak=(-1)^a\binom b{a+c}.\end{equation}

\begin{lemma}\label{.X'23.8}If $F$, $G$, and $J$ are integers with $0\le G\le F$ and $0\le J\le F$, then
\begin{align}
\label{.X'23.8.1}\textstyle (-1)^J\binom{F-J}{G-1}\binom{F}{J-1}&\textstyle=\sum\limits_{m=J}^{F-G+1}
(-1)^m\binom{F-J+m}{G-J+m}\binom{F-G+1}{m}\\
 \intertext{and}
\textstyle (-1)^{J-1}\binom{F-J}{G-1}\binom{F}{J-1}&\textstyle =\sum\limits_{m=0}^{J-1}
(-1)^m\binom{F-J+m}{G-J+m}\binom{F-G+1}{m}. \label{.X'23.9}
\end{align}
\end{lemma}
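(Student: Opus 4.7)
My plan is to establish the two identities (\ref{.X'23.8.1}) and (\ref{.X'23.9}) in two stages: first show they are equivalent—so that proving either one suffices—and second prove one of them by induction on $J$.

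For the reduction stage, I would add the two identities. Their left-hand sides, $(-1)^J\binom{F-J}{G-1}\binom{F}{J-1}$ and $(-1)^{J-1}\binom{F-J}{G-1}\binom{F}{J-1}$, are negatives of each other and so sum to zero; on the right, the two partial sums (from $m=0$ to $J-1$ and from $m=J$ to $F-G+1$) combine into the complete alternating convolution
\[
\sum_{m=0}^{F-G+1}(-1)^m\binom{F-J+m}{G-J+m}\binom{F-G+1}{m}.
\]
I would then apply (\ref{.X'1.2.g}) with $a = F-G+1$, $b = F-J$, $c = G-J$; the hypothesis $0 \le a$ follows from $G \le F$, and the conclusion evaluates the sum as $(-1)^{F-G+1}\binom{F-J}{F-J+1}$, which vanishes because the lower index strictly exceeds the (nonnegative) upper index. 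Hence (\ref{.X'23.8.1}) and (\ref{.X'23.9}) are equivalent, and it is enough to prove one of them.

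For the second stage, I would prove (\ref{.X'23.9}) by induction on $J$. The base case $J = 1$ is immediate: both sides collapse to $\binom{F-1}{G-1}$. For the inductive step from $J$ to $J+1$, I would apply Pascal's rule to write $\binom{F-J-1+m}{G-J-1+m} = \binom{F-J+m}{G-J+m} - \binom{F-J-1+m}{G-J+m}$ inside the right-hand side of (\ref{.X'23.9}) at $J+1$. This splits that sum into a piece which, after peeling off the top term at $m = J$, matches the inductive hypothesis, plus a correction sum $\sum_{m=0}^{J}(-1)^m\binom{F-J-1+m}{G-J+m}\binom{F-G+1}{m}$. The main obstacle is evaluating this correction in closed form: its \emph{completion} to $m = 0,\dots,F-G+1$ again vanishes by (\ref{.X'1.2.g}), this time with parameters $(a,b,c)=(F-G+1,F-J-1,G-J)$, so the correction equals the negative of its tail from $m = J+1$ to $F-G+1$. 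After an index shift this tail is a specialization of the right-hand side of (\ref{.X'23.8.1}) at parameters $(F-1, G, J+1)$; the equivalence established in the first stage then evaluates it in closed form, and assembling all pieces gives the required identity at $J+1$, completing the induction.
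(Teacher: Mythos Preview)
Your first stage is correct and mirrors the paper's own deduction of (\ref{.X'23.9}) from (\ref{.X'23.8.1}): the complete alternating sum collapses to $(-1)^{F-G+1}\binom{F-J}{F-J+1}=0$ by (\ref{.X'1.2.g}), so the two identities are indeed equivalent pointwise.

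The inductive step, however, has a genuine gap. After Pascal, your correction is
\[
C=\sum_{m=0}^{J}(-1)^m\binom{F-J-1+m}{G-J+m}\binom{F-G+1}{m},
\]
and you pass to its tail $T=\sum_{m=J+1}^{F-G+1}(-1)^m\binom{F-J-1+m}{G-J+m}\binom{F-G+1}{m}$. But the right side of (\ref{.X'23.8.1}) at $(F-1,G,J+1)$ is
\[
\sum_{m=J+1}^{F-G}(-1)^m\binom{F-J-2+m}{G-J-1+m}\binom{F-G}{m},
\]
and no index shift turns $T$ into this: matching the summand forces $F'-J'=F-J-1$, $G'-J'=G-J$, and $F'-G'=F-G$, which is inconsistent. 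In fact, applying Pascal to $\binom{F-G+1}{m}$ shows that $T$ equals the $(F,G+1,J+1)$-instance of the right side of (\ref{.X'23.8.1}) minus the $(F,G+1,J)$-instance. The first of these is at level $J+1$, so an induction on $J$ alone cannot supply it; you would need a double induction (for example, outer induction on $F-G$, inner on $J$). As written, the step is circular.

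For comparison, the paper proves (\ref{.X'23.8.1}) directly via the one-line recursion
\[
R(F,G,J)=R(F-1,G-1,J-1)+(-1)^J\binom{F-1}{G-1}\binom{F-G+1}{J-1},
\]
obtained by peeling off the $m=J-1$ term; the inductive step then reduces to an elementary factorial identity among the closed forms, after handling the base cases $J=0$, $G=0$, $J=1$, $G=1$, and the empty-sum range $F+1<G+J$. Your scheme can be repaired by strengthening the induction, but the key identification you propose does not hold.
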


\begin{proof} We first prove (\ref{.X'23.8.1}). Let $L(F,G,J)$ and $R(F,G,J)$ represent the left and right sides of (\ref{.X'23.8.1}), respectively. In other words, 

$$\begin{array}{ll}
L(F,G,J)=(-1)^J\binom{F-J}{G-1}\binom{F}{J-1}\ &\text{and}\\
R(F,G,J)=\sum\limits_{m=J}^{F-G+1}(-1)^m
\binom{F-J+m}{G-J+m}\binom{F-G+1}{m}.\end{array}$$
The proof is by induction. 

First consider $J=0$ and $0\le G\le F$. 
Observe that  $L(F,G,0)=0$ and apply (\ref{.X'1.2.g})
to see that $$R(F,G,0)=\textstyle \sum\limits_{m=0}^{F-G+1}(-1)^m
\binom{F+m}{G+m}\binom{F-G+1}{m}=(-1)^{F-G+1}\binom{F}{F+1}=0.$$
Thus, $L(F,G,0)=R(F,G,0)$ for all $F$ and $G$ with $0\le G\le F$.

Next consider $G=0$ and $0\le J\le F$. Observe that $L(F,0,J)=0$. Apply (\ref{.X'1.2.g}) once again to see that 
$$\textstyle R(F,0,J)=\sum\limits_{m=J}^{F+1}(-1)^m
\binom{F-J+m}{m-J}\binom{F+1}{m}=(-1)^{F+1}\binom{F-J}{F+1-J}=0.$$ Thus,
$L(F,0,J)=R(F,0,J)$ for all $F$ and $J$ with $0\le J\le F$.

Also, notice that if $F+1<G+J$, then $L(F,G,J)$ and $R(F,G,J)$ are both zero. Indeed, $0\le F-J<G-1$ forces $\binom{F-J}{G-1}=0$; hence $L(F,G,J)=0$. Also, $F-G+1<J$; so the sum involved in  $R(F,G,J)$ is empty and $R(F,G,J)=0$.

Now we consider the main case: $1\le J$, $1\le G$, and $G+J\le F+1$. Observe that $$\begin{array}{llll}
R(F,G,J)&=& R(F-1,G-1,J-1)+(-1)^J\binom{F-1}{G-1}\binom{F-G+1}{J-1}\vspace{5pt}\\
&=&L(F-1,G-1,J-1)+(-1)^J\binom{F-1}{G-1}\binom{F-G+1}{J-1},&\quad\text{by induction},\vspace{5pt}\\ 
&=&(-1)^{J-1}\binom{F-J}{G-2}\binom{F-1}{J-2}+(-1)^J\binom{F-1}{G-1}\binom{F-G+1}{J-1}.\end{array}$$
One may now quickly verify that $L(F,G,1)=R(F,G,1)$ for all $F$ and $G$ with $0\le G\le F$ and $L(F,1,J)=R(F,1,J)$ for all $F$ and $J$ with $0\le J\le F$. 
 Henceforth, we assume that 
\begin{equation}\label{.X'hyp}2\le J,\quad  2\le G,\quad  \text{and} \quad G+J\le F+1.\end{equation}
The ambient hypotheses, together with hypotheses (\ref{.X'hyp}), ensure that
$$\begin{array}{l}0\le G-2\le F-J,\quad 0\le J-2\le F-1,\quad 0\le G-1\le F-1,\quad 0\le J-1\le F-G+1,\\
0\le G-1\le F-J,\quad 0\le J-1\le F
\end{array}
$$ and therefore, each of the relevant binomial coefficients $\binom ab$ is equal to $\frac{a!}{b!(b-a)!}$. In order to show that $R(F,G,J)=L(F,G,J)$ it suffices to show 
$$\textstyle (-1)^{J-1}\binom{F-J}{G-2}\binom{F-1}{J-2}+(-1)^J\binom{F-1}{G-1}\binom{F-G+1}{J-1}=(-1)^J\binom{F-J}{G-1}\binom{F}{J-1}.$$
In other words, it suffices to show 
\begin{equation}\label{.X'STS}\textstyle \binom{F-1}{G-1}\binom{F-G+1}{J-1}=\binom{F-J}{G-2}\binom{F-1}{J-2}+\binom{F-J}{G-1}\binom{F}{J-1}.\end{equation}
The right side of (\ref{.X'STS}) is
$$\begin{array}{ll}&
\frac{(F-1)!(F-J)!}{(G-1)!(F-J-G+2)!(J-1)!(F+1-J)!}\Big((G-1)(J-1)+F(F-J-G+2)\Big)\vspace{5pt}\\
=&\frac{(F-1)!(F-J)!}{(G-1)!(F-J-G+2)!(J-1)!(F+1-J)!}(F-J+1)(F-G+1)\vspace{5pt}\\
=&\binom{F-1}{G-1}\binom{F-G+1}{J-1},
\end{array}$$which is the left side of (\ref{.X'STS}). This completes the proof of (\ref{.X'23.8.1}).

 Apply  (\ref{.X'1.2.g}) to see that
\begin{equation}\label{.black-dot}\textstyle \sum\limits_{m=0}^{F-G+1}
(-1)^m\binom{F-J+m}{G-J+m}\binom{F-G+1}{m}=(-1)^{F-G+1}\binom{F-J}{F-J+1}=0.\end{equation} (The right-most equality in (\ref{.black-dot}) holds  since $0\le F-J< F-J+1$.) The formula (\ref{.X'23.9}) follows readily because, if $F-G+1<J$, then the right side of (\ref {.X'23.9}) is equal to the left side of (\ref{.black-dot}), and, if $J\le F-G+1$, then (\ref {.X'23.9}) is obtained by subtracting (\ref{.X'23.8.1}) from (\ref{.black-dot}).
\end{proof}

\begin{lemma}\label{.X'Cj25.1} If $\goth f$, $\goth g$, and $L$ are integers with $1\le \goth g\le \goth f$, then 
\begin{equation}\label{.X25.1gts}(-1)^\goth g\chi(L=0)\chi(2\le f)=\begin{cases} 
+\binom{\goth f-2}{L}\binom{L-1}{\goth g-2}\\
+\sum\limits_{\ell\le \goth f-\goth g-2}\binom{\goth f-\goth g}{\goth g-L+\ell} \binom{\goth g+\ell-1}\ell \\
-\sum\limits_{L-\goth g +1\le \ell}
\binom{\goth f}{2\ell+2\goth g-L}\binom{\goth g+\ell-1}{\ell}.\end{cases}\end{equation}
\end{lemma}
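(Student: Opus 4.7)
The plan is to prove (\ref{.X25.1gts}) in two main stages: first, use generating functions in an auxiliary variable $s$ (with $L$ as the exponent) to observe that the second and third sums, once their range restrictions are dropped, agree identically; and then perform case analysis on $L$ to conclude. Define the unrestricted analogues
\[
\tilde P_2(L) := \sum_{\ell \ge 0}\binom{\goth f - \goth g}{\goth g - L + \ell}\binom{\goth g + \ell - 1}{\ell}, \qquad \tilde P_3(L) := \sum_{\ell \ge 0}\binom{\goth f}{2\ell + 2\goth g - L}\binom{\goth g + \ell - 1}{\ell}.
\]
Swapping the order of summation, applying $\sum_k \binom{n}{k}s^{-k} = s^{-n}(1+s)^n$, and using $\sum_\ell \binom{\goth g + \ell -1}{\ell}s^\ell = (1-s)^{-\goth g}$ yields
\[
\sum_L \tilde P_2(L)s^L = \sum_L \tilde P_3(L)s^L = s^{2\goth g - \goth f}(1+s)^{\goth f - \goth g}(1-s)^{-\goth g},
\]
where the crux is the factorization $(1-s^2)^{-\goth g} = (1-s)^{-\goth g}(1+s)^{-\goth g}$. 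Hence $\tilde P_2(L) = \tilde P_3(L)$ for every integer $L$.

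Introducing the boundary terms $E_2(L) := \tilde P_2(L) - P_2(L)$ (the tail $\ell \ge \goth f - \goth g - 1$) and $E_3(L) := \tilde P_3(L) - P_3(L)$ (the head $0 \le \ell \le L - \goth g$), the previous step gives $P_2(L) - P_3(L) = E_3(L) - E_2(L)$, so (\ref{.X25.1gts}) is equivalent to showing
\[
P_1(L) + E_3(L) - E_2(L) = (-1)^\goth g\chi(L=0)\chi(2 \le \goth f),
\]
where $P_1(L) = \binom{\goth f - 2}{L}\binom{L - 1}{\goth g - 2}$. Because $E_3(L) = 0$ for $L \le \goth g - 1$ and $E_2(L) = 0$ for $L \le \goth g - 2$, one splits into cases. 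For $L \le \goth g - 2$ the only contribution is $P_1$, and a direct check (using $\binom{-1}{\goth g - 2} = (-1)^{\goth g - 2}$) gives $(-1)^\goth g$ precisely when $L = 0$; the factor $\chi(2 \le \goth f)$ is automatic because $L \le \goth g - 2$ forces $\goth g \ge 2$ and hence $\goth f \ge \goth g \ge 2$. For $L = \goth g - 1$, only the single term $\ell = \goth f - \goth g - 1$ survives in $E_2$, giving $E_2 = \binom{\goth f - 2}{\goth g - 1}$, which cancels against $P_1 = \binom{\goth f - 2}{\goth g - 1}$ (while $E_3 = 0$).

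The main obstacle is the remaining case $L \ge \goth g$, where both $E_2$ and $E_3$ contribute non-trivially. After the substitutions $\ell \mapsto \goth f - \goth g - \ell$ in $E_2$ and $\ell \mapsto L - \goth g - \ell$ in $E_3$, together with the symmetry $\binom{n}{k} = \binom{n}{n-k}$, both sums take the convolution form $\sum_k \binom{a}{k}\binom{b - k}{\goth g - 1}$; cancellation with $P_1$ then follows from repeated applications of the Chu--Vandermonde identity and Pascal's rule, the key algebraic cancellation being $\goth g \binom{\goth f - 2}{\goth g} = (\goth f - \goth g - 1)\binom{\goth f - 2}{\goth g - 1}$. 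Alternatively, one can verify $P_1 + E_3 - E_2 = 0$ for $L \ge \goth g$ by induction on $L - \goth g \ge 0$ using a Pascal-type recursion that relates the expression at $L$ to that at $L - 1$, reducing the argument to the already-settled base case $L = \goth g - 1$.
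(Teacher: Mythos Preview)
Your generating-function reduction is correct and is a nice way to organise the argument: the identity $\tilde P_2(L)=\tilde P_3(L)$ follows exactly as you say, and the ensuing reformulation $P_1(L)+E_3(L)-E_2(L)=(-1)^{\goth g}\chi(L=0)\chi(2\le\goth f)$ is equivalent to the lemma. Your treatment of $L\le\goth g-2$ is essentially the paper's Claim~\ref{.Xclaim1} (the paper makes the same ``drop the constraints and compare power series'' observation there), and your case $L=\goth g-1$ is a correct one-term check, though it needs a small patch when $\goth g=1$ since then $\binom{\goth g-2}{\goth g-2}=\binom{-1}{-1}=0$ and the cancellation with $E_2$ works for a different reason.

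The gap is the case $L\ge\goth g$. After your substitution $\ell\mapsto L-\goth g-\ell$ in $E_3$, one gets
\[
E_3(L)=\sum_{k=0}^{L-\goth g}\binom{\goth f}{L-2k}\binom{L-k-1}{\goth g-1},
\]
which is \emph{not} of the claimed shape $\sum_k\binom{a}{k}\binom{b-k}{\goth g-1}$: the index enters the first binomial as $2k$, not $k$, so Chu--Vandermonde does not apply directly. The ``key cancellation'' $\goth g\binom{\goth f-2}{\goth g}=(\goth f-\goth g-1)\binom{\goth f-2}{\goth g-1}$ is exactly what makes $L=\goth g$ work, but it does not by itself handle larger $L$; and your alternative ``induction on $L-\goth g$ via a Pascal-type recursion'' is not spelled out---I do not see a single-step recursion in $L$ alone that closes, since Pascal on $\binom{\goth f}{L-2k}$ changes $\goth f$, while Pascal on $\binom{L-k-1}{\goth g-1}$ changes $\goth g$. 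This region $\goth g\le L\le\goth f-2$ is precisely where the paper's work is concentrated: it disposes of $L\ge\goth f-1$ separately (everything vanishes) and then runs an induction on $\goth f$, using both $(\goth f,\goth g,L)$ and $(\goth f,\goth g,L+1)$ to reach $(\goth f+1,\goth g,L)$, with the final step a factorial identity. Your reformulation in terms of $E_2,E_3$ is a genuine simplification of the bookkeeping, but to finish you still need an argument of comparable substance for $L\ge\goth g$; as written, that step is missing.
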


\begin{proof} The proof is by induction. The base cases are treated in Claims~\ref{.Xclaim0}, \ref{.Xclaim1}, \ref{.Xclaim2}, and \ref{.Xclaim3}.
\begin{claim}\label{.Xclaim0}Lemma~{\rm\ref{.X'Cj25.1}} holds if $\goth g=1$.\end{claim}
\begin{proof}
Let $\theta(L,\goth f)$ be the right side of (\ref{.X25.1gts}), after $\goth g$ has been set equal to $1$. In other words, $$\theta(L,\goth f)=\textstyle\sum\limits_{0\le \ell\le \goth f-3}\binom{\goth f-1}{
1-L+\ell} 
-\sum\limits_{\max\{L,0\}\le \ell}
\binom{\goth f}{
2-L+2\ell}.$$ Observe that $\theta(L,1)=0$ for all $L$. 
We complete the proof of Claim~\ref{.Xclaim0} by showing that $\theta(L,\goth f)=-\chi(L=0)$ for all $\goth f$ with $2\le \goth f$.

A short calculation gives
\begin{equation}\label{.28.9gts}\textstyle \theta(L,\goth f+1)=\theta(L,\goth f)+\theta(L+1,\goth f)+[\chi(2\le \goth f)-\chi(0\le L)] \binom{\goth f}{
L+1},\end{equation} for all $\goth f$ and $L$. Apply (\ref{.28.9gts}), with $\goth f=1$, to conclude that $\theta(L,2)=-\chi(L=0)$. 
 For $2\le \goth f$, apply induction and (\ref{.28.9gts}) to see that 
$$\begin{array}{lll} \theta(L,\goth f+1)&=&-\chi(L=0)-\chi(L=-1)+[1-\chi(0\le L)] \binom{\goth f}{
L+1}\\&=&-\chi(L=0)-\chi(L=-1)+\chi(L=-1)\\&=&-\chi(L=0).\end{array}$$
The proof of Claim~\ref{.Xclaim0} is complete. \end{proof}

\begin{claim}\label{.Xclaim1}Lemma~{\rm\ref{.X'Cj25.1}} holds if $L\le \goth g-2$.\end{claim}
\begin{proof}In light of Claim~\ref{.Xclaim0}, we may assume that $2\le \goth g$. Let
$$\begin{array}{llll}A&=&\binom{\goth f-2}{L}\binom{L-1}{\goth g-2},\\
B&=&\sum\limits_{\ell\le \goth f-\goth g-2}\binom{\goth f-\goth g}{\goth g-L+\ell} \binom{\goth g+\ell-1}\ell,& \text{and} \\
C&=&\sum\limits_{L-\goth g +1\le \ell}
\binom{\goth f}{2\ell+2\goth g-L}\binom{\goth g+\ell-1}{\ell}.
\end{array}$$
Observe first that
$$A=\left\{\begin{array}{lll} 0,\quad&\text{if $L\le -1$}, &\text{because $\binom{\goth f-2}{L}=0$,}\\(-1)^\goth g,\quad&\text{if $L=0$,}\\0,\quad&\text{if $1\le L\le \goth g-2$,}&\text{because $\binom{L-1}{\goth g-2}=0$};\end{array}\right.$$ thus, 
$$(-1)^\goth g\chi(L=0)=A, \quad\text{when $L\le \goth g-2$}.$$It remains to show that 
\begin{equation}\label{.Xrts}L\le \goth g-2 \implies B=C.\end{equation}
Notice that if $L\le \goth g-2$, then the constraint  $\ell\le \goth f-\goth g-2$ is redundant in $B$ and the constraint $L-\goth g+1\le \ell$ is redundant in $C$. Indeed, if $\goth f-\goth g-1\le \ell  $, then 
$$\goth f-\goth g+1= 2+(\goth f-\goth g-1)\le (\goth g-L)+\ell$$ and   $\binom{\goth f-\goth g}{\goth g-L+\ell}=0$. In a similar manner, if $\ell\le L-\goth g$, then 
$$\ell\le L-\goth g\le -2$$ and $\binom{\goth g+\ell-1}{\ell}=0$.
When $L\le \goth g-2$, then 
$$\textstyle B=\sum\limits_{\ell\in \mathbb Z}\binom{\goth f-\goth g}{\goth g-L+\ell} \binom{\goth g+\ell-1}\ell
=\sum\limits_{\ell\in \mathbb Z}\binom{\goth f-\goth g}{\goth f-2\goth g+L-\ell} \binom{\goth g+\ell-1}\ell$$
is equal to the coefficient of $x^{\goth f-2\goth g+L}$ in $(1+x)^{\goth f-\goth g}\cdot \frac 1{(1-x)^\goth g}$; and 
$$\textstyle C=\sum\limits_{\ell\in \mathbb Z}
\binom{\goth f}{2\ell+2\goth g-L}\binom{\goth g+\ell-1}{\ell}=\sum\limits_{\ell\in \mathbb Z}
\binom{\goth f}{\goth f-2\goth g+L-2\ell}\binom{\goth g+\ell-1}{\ell}$$ is equal to the coefficient of $x^{\goth f-2\goth g+L}$ in $(1+x)^{\goth f}\cdot \frac 1{(1-x^2)^\goth g}$. The two coefficients are equal; (\ref{.Xrts}) has been established, and the proof of Claim~\ref{.Xclaim1} is complete.
\end{proof}

\begin{claim}\label{.Xclaim2}Lemma~{\rm\ref{.X'Cj25.1}} holds if $\goth g=\goth f$.\end{claim}
\begin{proof}In light of Claims~\ref{.Xclaim0} and \ref{.Xclaim1}, it suffices to assume that
$1\le \goth g-1\le L$. We  show that $A+B+C=0$ for
$$\begin{array}{ll}
A=\binom{\goth g-2}{L}\binom{L-1}{\goth g-2},\\
B=\sum\limits_{\ell\le -2}\binom{0}{\goth g-L+\ell} \binom{\goth g+\ell-1}\ell,&\text{and} \\
C=-\sum\limits_{L-\goth g +1\le \ell}
\binom{\goth g}{2\ell+2\goth g-L}\binom{\goth g+\ell-1}{\ell}.\end{array}$$
The fact that  $0\le\goth g-2<L$, ensures that  $\binom{\goth g-2}L=0$; and therefore,   $A=0$. The sum $B$ is zero because the parameter $\ell$ in the binomial coefficient $\binom{\goth g+\ell-1}\ell$ is negative. Suppose  that the parameter $\ell$  makes a non-zero contribution to $C$. Then  $0\le \ell$ because of the binomial coefficient $\binom{\goth g+\ell-1}{\ell}$. It follows that  $\ell\le 2\ell$. On the other hand, $\ell$ also satisfies $L-\goth g+1\le \ell$ and $0\le L-\goth g-2\ell$. Thus, 
$$L-\goth g+1\le \ell\le 2\ell\le L-\goth g,$$which is impossible, and  $C$ is zero. This completes the proof of Claim~\ref{.Xclaim2}.
\end{proof}

\begin{claim}\label{.Xclaim3}Lemma~{\rm\ref{.X'Cj25.1}} holds if $\goth f-1\le L$.\end{claim}

\begin{proof}The hypothesis $1\le \goth g\le \goth f$, combined with Claim~\ref{.Xclaim0}, allows us to assume that $2\le \goth f$. In this case, each of the constituent pieces of (\ref{.X25.1gts}) is zero.
It is clear that $\chi(L=0)$ and $\binom{\goth f-2}L$ are both zero. The summand 
$\sum\limits_{\ell\le \goth f-\goth g-2}\binom{\goth f-\goth g}{\goth g-L+\ell} \binom{\goth g+\ell-1}\ell$ is zero because, $$\ell\le \goth f-\goth g-2\implies \goth g-L+\ell\le \goth f-2-L\le -1\implies\binom{\goth f-\goth g}{\goth g-L+\ell}=0.$$
The summand $\sum\limits_{L-\goth g +1\le \ell}
\binom{\goth f}{2\ell+2\goth g-L}\binom{\goth g+\ell-1}{\ell}$ is zero because, if $L-\goth g+1\le \ell$, then 
$$\goth f+1\le (L+1)+1\le (\ell+\goth g)+(\ell+\goth g-L) =2\ell+2\goth g-L$$
and $\binom{\goth f}{2\ell+2\goth g-L}=0$. This completes the proof of Claim~\ref{.Xclaim3}. \end{proof}

We now carry out the induction step in the proof of Lemma~\ref{.X'Cj25.1}. We assume that (\ref{.X25.1gts}) holds at $(\goth f,\goth g,L)$ and $(\goth f,\goth g,L+1)$ and we prove that (\ref{.X25.1gts}) holds at $(\goth f+1,\goth g,L)$. In light of  Claims \ref{.Xclaim0}, \ref{.Xclaim1}, \ref{.Xclaim2}, and \ref{.Xclaim3}, we may assume that
\begin{equation}\label{.Xconstraints}
2\le \goth g\le \goth f\quad \text{and}\quad \goth g-1\le L\le \goth f-1.\end{equation}
Let $$\textstyle \theta(\goth f,\goth g,L)=
\sum\limits_{\ell\le \goth f-\goth g-2}\binom{\goth f-\goth g}{\goth g-L+\ell} \binom{\goth g+\ell-1}\ell \\
-\sum\limits_{L-\goth g +1\le \ell}
\binom{\goth f}{2\ell+2\goth g-L}\binom{\goth g+\ell-1}{\ell}.
$$To complete the proof, we must prove that
\begin{equation}\label{.Xgoal25.1}\textstyle \binom{\goth f-1}L\binom{L-1}{\goth g-2}+\theta(\goth f+1,\goth g,L)=0.\end{equation}
Write
$\theta(\goth f+1,\goth g,L)=A+B$ for
$$\textstyle    
A=\sum\limits_{\ell\le \goth f-\goth g-1}\binom{\goth f+1-\goth g}{\goth g-L+\ell} \binom{\goth g+\ell-1}\ell\quad \text{and}\quad
B=-\sum\limits_{L-\goth g +1\le \ell}
\binom{\goth f+1}{2\ell+2\goth g-L}\binom{\goth g+\ell-1}{\ell}.
$$
Observe that $A=A_1+A_2$ with $$\textstyle A_1=\sum\limits_{\ell\le \goth f-\goth g-2}\binom{\goth f+1-\goth g}{\goth g-L+\ell} \binom{\goth g+\ell-1}\ell\quad\text{and}\quad A_2=\sum\limits_{\ell= \goth f-\goth g-1}\binom{\goth f+1-\goth g}{\goth g-L+\ell} \binom{\goth g+\ell-1}\ell
=\binom{\goth f+1-\goth g}{\goth f-L-1} \binom{\goth f-2}{\goth f-\goth g-1}.
$$Use Pascal's identity to write $A_1=A_1'+A_1''$ and $B=B_1+B_2$ with
$$\textstyle 
\begin{array}{lllllll}A_1'&=&\sum\limits_{\ell\le \goth f-\goth g-2}\binom{\goth f-\goth g}{\goth g-L+\ell} \binom{\goth g+\ell-1}\ell,&&
A_1''&=&\sum\limits_{\ell\le \goth f-\goth g-2}\binom{\goth f-\goth g}{\goth g-L+\ell-1} \binom{\goth g+\ell-1}\ell,\\
\textstyle B_1&=&-\sum\limits_{L-\goth g +1\le \ell}
\binom{\goth f}{2\ell+2\goth g-L}\binom{\goth g+\ell-1}{\ell},&\text{ and }& B_2&=&
-\sum\limits_{L-\goth g +1\le \ell}
\binom{\goth f}{2\ell+2\goth g-L-1}\binom{\goth g+\ell-1}{\ell}.\end{array}$$
Separate $B_2=B_2'+ B_2''$ with

$$\textstyle 
\begin{array}{lllllll}
B_2'&=&
-\sum\limits_{L-\goth g +2\le \ell}
\binom{\goth f}{2\ell+2\goth g-L-1}\binom{\goth g+\ell-1}{\ell}&\text{ and }&
 B_2''&=&-
\sum\limits_{L-\goth g +1= \ell}
\binom{\goth f}{2\ell+2\goth g-L-1}\binom{\goth g+\ell-1}{\ell}\\
&&&&&=&-\binom{\goth f}{L+1}\binom{L}{L-\goth g +1}.\end{array}
$$
Thus, we apply induction to see that \begin{align}&\textstyle\text{the left side of (\ref{.Xgoal25.1})}\vspace{5pt}\notag\\
=&\textstyle\binom{\goth f-1}L\binom{L-1}{\goth g-2}+\theta(\goth f+1,\goth g,L)\vspace{5pt}\notag\\
=&\textstyle\binom{\goth f-1}L\binom{L-1}{\goth g-2}+(A_1'+A_1''+A_2)+(B_1+B_2'+B_2'')\vspace{5pt}\notag\\
=&\textstyle\binom{\goth f-1}L\binom{L-1}{\goth g-2}+(A_1'+B_1)+(A_1''+B_2')+A_2+B_2''\vspace{5pt}\notag\\
=&\textstyle\binom{\goth f-1}L\binom{L-1}{\goth g-2}+\theta(\goth f,\goth g,L)+\theta(\goth f,\goth g,L+1)+A_2+B_2''\vspace{5pt}\notag\\
=&\textstyle\Big(\binom{\goth f-1}L\binom{L-1}{\goth g-2}-\binom{\goth f-2}{L}\binom{L-1}{\goth g-2}\Big)-\binom{\goth f-2}{L+1}\binom{L}{\goth g-2}+\binom{\goth f+1-\goth g}{\goth f-L-1} \binom{\goth f-2}{\goth f-\goth g-1}-\binom{\goth f}{L+1}\binom{L}{L-\goth g +1}\vspace{5pt}\notag\\
=&\textstyle\binom{\goth f-2}{L-1}\binom{L-1}{\goth g-2}-\binom{\goth f-2}{L+1}\binom{L}{\goth g-2}+\binom{\goth f+1-\goth g}{\goth f-L-1} \binom{\goth f-2}{\goth f-\goth g-1}-\binom{\goth f}{L+1}\binom{L}{L-\goth g +1}
\label{.Xcrit-sum}\end{align}
If $L=\goth f-1$ or $L=\goth f-2$, then $\binom{\goth f-2}{L+1}=0$ and (\ref{.Xcrit-sum}) is equal to
$$\textstyle\binom{\goth f-2}{\goth f-2}\binom{\goth f-2}{\goth g-2}
+\binom{\goth f+1-\goth g}{0} \binom{\goth f-2}{\goth f-\goth g-1}-\binom{\goth f}{\goth f}
\binom{\goth f-1}{\goth f-\goth g}=0$$or
$$\textstyle \binom{\goth f-2}{\goth f-3}\binom{\goth f-3}{\goth g-2}+
(\goth f+1-\goth g) \binom{\goth f-2}{\goth g-1}-\goth f\binom{\goth f-2}{\goth g-1}=0,$$respectively. 
If $\goth g=\goth f$, then the constraint $\goth g-1\le L\le \goth f-1$ from (\ref{.Xconstraints}) forces $L=\goth g-1$; and therefore, 
(\ref{.Xcrit-sum}) is equal to 
$$\textstyle\binom{\goth g-2}{\goth g-2}\binom{\goth g-2}{\goth g-2}-\binom{\goth g-2}{\goth g}\binom{\goth g-1}{\goth g-2}+\binom{1}{0} \binom{\goth g-2}{-1}-\binom{\goth g}{\goth g}\binom{\goth g-1}{0}=0.$$
We now evaluate (\ref{.Xcrit-sum}) in the remaining cases. That is, we assume
$L\le \goth f-3$, $g\le \goth f-1$, and the constraints of (\ref{.Xconstraints}) are in effect. In this case,  every binomial coefficient $\binom ab$ which appears in (\ref{.Xcrit-sum}) satisfies $0\le b\le a$ and may be evaluated as $\binom ab=\frac{a!}{b!(a-b)!}$. It follows that (\ref{.Xcrit-sum}) is equal to
$$\textstyle \frac{(\goth f-2)!}{(\goth f-L-1)!(L+1)(\goth g-1)!(L-\goth g+2)!}\begin{cases}\phantom{+}(L+1)(\goth g-1)(L-\goth g+2)\\
-(\goth f-L-2)(\goth f-L-1)(\goth g-1)\\+(L+1)(\goth f-\goth g)(\goth f-\goth g+1)\\-\goth f(\goth f-1)(L-\goth g+2),\end{cases}$$ and this is zero. The proof of Lemma~\ref{.X'Cj25.1} is complete.
\end{proof}

Related techniques are used in \cite{KPU-HS} to give an  explicit formula for the Hilbert Series of an algebra defined by a linearly presented, standard graded, residual intersection of a grade three Gorenstein ideal.

\section{Application to blowup algebras.}\label{BA}

Our original motivation for this paper was to find the defining equations for linearly presented height three Gorenstein ideals. We conclude the paper by summarizing the relevant results of the present paper in the language of \cite{KPU-BA}. The integers  $\goth f$ and $\goth g$ from the present paper become $n$ and $d$ in \cite{KPU-BA}.

\begin{data}\label{data8}
 Let $k$ be a field and let $n$ and $d$ be positive integers. 
Let  $\varphi$ be an $n\times n$ alternating matrix with linear entries from the polynomial ring $R=k[x_1,\dots,x_d]$ and  $B$ be the $d\times n$ matrix with linear entries from $T=k[T_1,\dots,T_n]$ such that  the matrix equation
\begin{equation}\label{JD}[T_1,\dots,T_n]\cdot \varphi=[x_1,\dots,x_d]\cdot B\end{equation}
holds. \end{data}

\begin{definition}\label{C-of-phi} Adopt Data~\ref{data8}. We define a homogeneous ideal $C(\varphi)$ in   $T$. Consider the $(n+d)\times(n+d)$ alternating matrix $$\goth B=\bmatrix \varphi&-B^{\rm t}\\B&0\endbmatrix,$$with entries in the polynomial ring $k[x_1,\dots,x_d,T_1,\dots,T_n]$.
Let $\goth B'$ be the matrix which is obtained from $\goth B$ by deleting the final row and column. View the Pfaffian $\Pf(\goth B')$ of $\goth B'$ as a polynomial in $T[x_1,\dots,x_d]$ and  
let $C(\varphi)=c_{T}(\Pf(\goth B'))$ be
 the content of $\Pf(\goth B')$.\end{definition}
\begin{Remarks}Retain the notation of Definition~\ref{C-of-phi}.
\begin{enumerate}[\rm(a)]
\item If $n+d$ is even then $C(\varphi)=0$.
\item The ideal $C(\varphi)$ of $T$ is generated by  homogeneous forms  of degree $d-1$. 
\end{enumerate}
  \end{Remarks}

\begin{theorem}\label{unmpart}
Adopt   Data~{\rm \ref{data8}} with $3\le d<n$ and $n$ odd. If $n-d\le\grade I_d(B)$, then the following statements hold.
\begin{enumerate}[\rm(a)]
\item\label{8.3.a} The unmixed part of $I_d(B)$ is equal to
$I_d(B)^{\text{\rm unm}}=I_d(B)+C(\varphi)$.
\item\label{8.3.b} The multiplicity of 
$T/I_d(B)^{\text{\rm unm}}$ is 
$$\sum\limits_{i=0}^{\lfloor (n-d)/2\rfloor}\binom{n-2-2i}{n-d-2i}.$$

\item\label{8.3.c} The ring $T/I_d(B)^{\text{\rm unm}}$ is Cohen-Macaulay on the punctured spectrum and $$\operatorname{depth} (T/I_d(B)^{\text{\rm unm}})=\begin{cases}
1,&\text{if $d$ is odd},\\
2,&\text{if $d$ is even and $d+3\le n$, and} \\
n-1,&\text{if $d+1=n$}.
\end{cases}$$
\item\label{8.3.d} If $d$ is odd, then the  ideal  $I_d(B)$ is unmixed;
furthermore, $I_d(B)$
 has a linear minimal resolution and $\operatorname{reg}(T/I_d(B))=d$.
\item\label{8.3.e} The Hilbert series of $T/I_d(B)^{\text{\rm unm}}$ is 
$$\textstyle \frac{\sum\limits_{\ell=0}^{d-2}\binom{\ell+n-d-1}{\ell}z^\ell + \sum\limits_{\ell=0}^{n-d-2}(-1)^{\ell+d+1}\binom{d+\ell-1}\ell z^{2d-n+\ell}}{(1-z)^d} +(-1)^d\sum\limits_{j\le \lceil\frac{n-d-3}{2}\rceil}
\binom{d+j-1}{j}z^{2j+2d-n}  .$$

\end{enumerate}\end{theorem}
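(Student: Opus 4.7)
\textbf{Proof plan for Theorem \ref{unmpart}.}

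The overall strategy is to exhibit Theorem \ref{unmpart} as a specialization of Theorems \ref{main} and \ref{main.c}, after translating Data \ref{data8} into the language of Data \ref{87data6}. First I would set $R=T$, $F=T^n$ with basis $e_1,\dots,e_n$, $G=T^d$ with basis $Y_1,\dots,Y_d$, and $\tau\in F^*$ with $\tau(e_i)=T_i$. Writing $\varphi=\sum_{k=1}^{d}x_kA_k$ with each $A_k$ an $n\times n$ alternating scalar matrix, I would define $\mu:G^*\to\bigwedge^2 F$ by $\mu(X_k)=\sum_{i<j}(A_k)_{ij}\,e_i\wedge e_j$. A direct computation, using the relation \eqref{JD}, shows that $\Psi=\tau\circ\mu$ is represented by the matrix $B^{\rm t}$; in particular $I_{\goth g}(\Psi)=I_d(B)$, $\goth f=n$, $\goth g=d$, and $\delta=n-d$. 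The hypotheses \ref{87hyp6} hold: \ref{87hyp6-a} is the standing assumption $n-d\le\operatorname{grade} I_d(B)$, and \ref{87hyp6-b} holds since $I_1(\tau)=(T_1,\dots,T_n)$ has grade $n=\goth f$.

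Next I would identify the ideals $\goth C$ of Data \ref{unm-data} and $C(\varphi)$ of Definition \ref{C-of-phi}. Both are defined via Pfaffians of the bordered alternating matrix $\goth B$ (respectively of its relevant sub-Pfaffians $B_i$), and the intrinsic description of $\goth C$ via the map $c$ of \eqref{c-2.3} is shown in the Remarks following Data \ref{unm-data} to be independent of the choice of $X_1$. The argument sketched around \eqref{at this point} in the introduction already gives $C(\varphi)+I_d(B)\subseteq I_d(B):I_1(\tau)$, and a coordinate unwinding of the map $c$ — writing $c(\gamma)$ as the coefficient of a suitable monomial in the $x_k$ appearing in $\Pf(\goth B')$ — identifies the two ideals modulo $I_d(B)$. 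Granted this, part \ref{8.3.a} follows from Theorem \ref{main}.\ref{main.new-f}: since $T$ is Cohen-Macaulay, grade unmixed coincides with unmixed, so $I_d(B)^{\text{unm}}=I_d(B)^{\text{gunm}}=I_d(B)+\goth C=I_d(B)+C(\varphi)$.

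The remaining items are read off directly from Theorems \ref{main} and \ref{main.c}. For \ref{8.3.b}, the multiplicity formula is exactly Theorem \ref{main.c}.\ref{1.3.ii.2} (noting that $R/I$ and $R/I^{\text{unm}}$ have the same multiplicity). For \ref{8.3.c}, I would invoke Theorem \ref{main}.\ref{main.b} for $\widetilde R=T/I_d(B)^{\text{unm}}$ and then apply the Auslander-Buchsbaum formula, using $\operatorname{depth}(T)=n$: this yields depth $1$, $2$, or $n-1$ in the three parity/range cases exactly as stated. The Cohen-Macaulayness on the punctured spectrum follows from Remark \ref{CMPS}: for every non-maximal prime $\mathfrak p\supseteq I_d(B)$, we have $\mathfrak p\not\supseteq I_1(\tau)$, hence $I_d(B)_\mathfrak p=(I_d(B)+\goth C)_\mathfrak p$ is perfect of grade $\delta$. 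For \ref{8.3.d}, when $d$ is odd, $\delta=n-d$ is even, so Theorem \ref{main}.\ref{PS} gives unmixedness of $I_d(B)$, and Theorem \ref{main.c}.\ref{main.c.iii} furnishes the $d$-linear minimal resolution of $\overline R$, from which the regularity is immediate. Finally, \ref{8.3.e} is a direct specialization of the formula in Theorem \ref{main.c}.\ref{'23.19.c.ii} (with $\epsilon=\lceil(\delta-1)/2\rceil$), using $\dim T=n$.

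The chief obstacle is the Pfaffian identification in the second paragraph: matching the coordinate-free, divided-power definition of $\goth C$ via $c$ with the concrete Pfaffian-content definition of $C(\varphi)$. The computation of Example \ref{exs-Med}.\ref{4.4.g} handles the case $d=1$ explicitly and suggests the general pattern; extending it to arbitrary $d$ requires a careful bookkeeping of the action of the $\bigwedge^{\bullet} F^*$-structure on $\bigwedge^{2\bullet}F$, together with the observation that the coefficients of $\Pf(\goth B')$ as a polynomial in the $x_k$'s are precisely the images of the monomials in $D_{(\delta-1)/2}(G^*)$ under $c$.
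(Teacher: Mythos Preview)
Your proposal is correct and follows essentially the same route as the paper: translate Data~\ref{data8} into Data~\ref{87data6} with $\Psi$ represented by $B^{\rm t}$, verify Hypotheses~\ref{87hyp6}, and then read off parts (\ref{8.3.b})--(\ref{8.3.e}) from Theorems~\ref{main} and~\ref{main.c} together with Remark~\ref{CMPS} and Auslander--Buchsbaum, exactly as you outline. The paper does precisely this.

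For part (\ref{8.3.a}) the paper also proceeds as you suggest, proving the Pfaffian identification $\goth C=C(\varphi)$ directly (not merely modulo $I_d(B)$). The explicit computation runs as follows: one writes $\goth B'$ as the alternating matrix associated to $\Phi=\sum_\ell x_\ell\mu(h_\ell^*)+\underline{\Psi'}\in S\otimes_T\bigwedge^2(F\oplus G')$, expands $\Phi^{((n+d-1)/2)}$ via the divided-power structure, and observes that rank constraints force only the term $\big(\sum_\ell x_\ell\mu(h_\ell^*)\big)^{((n-d+1)/2)}\wedge(\underline{\Psi'})^{(d-1)}$ to survive. Expanding this and taking $x$-coefficients yields exactly the generators of $\goth C$ from (\ref{goth-C}), after checking that the coefficients with $u_d=0$ vanish (they lie in $\tau(\bigwedge^{n+1}F)=0$). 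One small correction to your last paragraph: the $x$-monomials appearing in $\Pf(\goth B')$ have degree $(\delta+1)/2$, not $(\delta-1)/2$; the domain $D_{(\delta-1)/2}(G^*)$ of $c$ parametrizes them via the integration $\int_{X_1}$, which accounts for the constraint $u_d\ge 1$.
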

\begin{Remark}The conclusions of assertion (\ref{8.3.d}) do not hold when $d$ is even: the ideal $I_d(B)$ is mixed and the minimal resolution of $I_d(B)^{\rm unm}$ is not linear; see for example, (\ref{notlinear}). 
\end{Remark}
\begin{proof} 
We begin working in the polynomial ring $S=k[x_1,\dots,x_d,T_1,\dots,T_n]$. 
Write $\varphi$ as $\sum_{\ell=1}^dx_\ell\varphi_\ell$, where $\varphi_1,\dots,\varphi_d$ are $n\times n$ alternating matrices with entries from the field $k$.
Observe that the equation (\ref{JD}) gives rise to
\begin{align}\label{8.1}[x_1,\dots,x_d]B&=[T_1,\dots,T_n]\varphi=[T_1,\dots,T_n](x_1\varphi_1+\dots+x_d\varphi_d)
\\&=[x_1,\dots,x_d]\left[\begin{matrix} [T_1,\dots,T_n]\varphi_1\\\hline \vdots\\\hline [T_1,\dots,T_n]\varphi_d\end{matrix}\right].\notag\end{align}
The entries of  $B$ and the entries of the final matrix in the previous display  
are linear forms from the polynomial ring $T=k[T_1,\dots,T_n]$; and therefore the equation
(\ref{8.1}) can occur only if
\begin{equation}\label{B}B= \left[\begin{matrix} [T_1,\dots,T_n]\varphi_1\\\hline \vdots\\\hline [T_1,\dots,T_n]\varphi_d\end{matrix}\right].\end{equation}
 In other words, the entry of $B$ in row $a$ and column $b$ is
\begin{equation}\label{Bab}B_{a,b}=\sum_i T_i(\varphi_a)_{i,b}\end{equation} for all $a$ and $b$.

Now we work in the polynomial ring $T=k[T_1,\dots,T_n]$ and 
we convert  the data of \ref{data8} into Data~\ref{87data6}.
Let $F$ and $G$ be free $T$-modules with bases  $e_1,\dots,e_n$ and $h_1,\dots,h_d$, respectively; and let $e_1^*,\dots,e_n^*$ and $h_1^*,\dots,h_d^*$ be the corresponding dual bases for $F^*$ and $G^*$, respectively. Define  $$\tau:F\to T\quad\text{and}\quad \mu: G^*\to \textstyle\bigwedge^2F$$ by
$$\tau(e_i)=T_i\quad \text{and}\quad \mu(h_\ell^*)=\sum_{1\le i<j\le n}(\varphi_{\ell})_{i,j} \,e_i\wedge e_j.$$
Notice that 
\begin{align*}\Psi(h_\ell^*)&=\tau(\mu(h_\ell^*))=\sum_{1\le i<j\le n}(\varphi_{\ell})_{i,j} \tau(e_i\wedge e_j)=\sum_{1\le i<j\le n}(\varphi_{\ell})_{i,j} (T_i e_j-T_je_i)\\
 &=\sum_{1\le i,j\le n}(\varphi_{\ell})_{i,j} T_i e_j=\sum_{j=1}^n B_{\ell,j}e_j,\end{align*}
where the final equality is due to (\ref{Bab}).
Therefore, $B^{\rm t}$ is the matrix for $\Psi$, with respect to the chosen bases. It follows that $I_d(\Psi)=I_d(B)$ and both hypothesis of \ref{87hyp6} are satisfied and we may apply the results of the present paper. Assertion (\ref{8.3.b}) is Theorem~\ref{main.c}.\ref{1.3.ii.2}. The fact that $T/(I_d(B))^{\rm umn}$ is Cohen-Macaulay on the punctured spectrum  is contained in Remark~\ref{CMPS}.  
Theorem~\ref{main}.\ref{main.b} gives the projective dimension  of $T/(I_d(B))^{\rm umn}$ as a $T$-module; and hence the depth of $T/(I_d(B))^{\rm umn}$ by way of the Auslander-Buchsbaum formula. 
Both assertions in  (\ref{8.3.c}) have been established.
Assertion~(\ref{8.3.d}) is Theorem~\ref{main}.\ref{PS} and Theorem~\ref{main.c}.\ref{main.c.iii}; and assertion~(\ref{8.3.e}) is Theorem~\ref{main.c}.\ref{'23.19.c.ii}.  

Now we prove (\ref{8.3.a}). Theorem~\ref{main}.\ref{main.new-f} shows that $I_d(B)^{\rm unm}=I_d(B)+\goth C$, for $\goth C$ from (\ref{unm-data-b}). We prove that the $T$-ideals $\goth C$ and $C(\varphi)$ are equal. If $Z$ is a free $T$-module of rank one and $z\in Z$, then let $o(z)$ represent the annihilator of $Z/(z)$. 
According to (\ref{2.3.3.5}), the $T$-ideal $\goth C$ of  (\ref{unm-data-b}) is 
generated by 
\begin{equation}\label{goth-C}\begin{array}{l}\left\{o\Big((\mu(h_1^*))^{(u_1)}\wedge \cdots\wedge (\mu(h_d^*))^{(u_d)}\wedge({\textstyle\bigwedge}^{d-1}\Psi)(h_1^*\wedge \dots \wedge h_{d-1}^*)\Big)\right\}\\  
\text{where $u_1+\dots +u_d
={\textstyle \frac{n-d+1}2}$ and  $1\le u_d$}.
\end{array}
\end{equation}
(Actually (\ref{2.3.3.5}) treats $1$ as the distinguished index and we have treated $d$ as the distinguished index. This switch makes no difference, only a sign is changed and even the sign change disappears as soon as we look at the order ideal.)

The homomorphism $\Psi: G^*\to F$, with $\Psi(h^*_\ell)=\sum_{j=1}^nB_{\ell,j}e_j$, for $1\le \ell\le d$, is naturally the same as the element
\begin{equation}\label{naturally}\sum_{\gf{1\le j\le n}{1\le \ell\le d}}B_{\ell,j}e_j \otimes h_\ell\in F\otimes G,\end{equation}which we call $\underline{\Psi}$.
Let $G'$ represent the submodule $\bigoplus_{\ell=1}^{d-1} T h_\ell'$ of $G$,
 $G^{\prime*}$ represent the submodule $\bigoplus_{\ell=1}^{d-1} T h_\ell^{\prime*}$ of $G^*$,  
$\Psi':G^{\prime*}\to F$ represent the restriction of $\Psi:G^*\to F$ to $G^{\prime*}$, and $\underline{\Psi'}$ be the element of $F\otimes G'$ which corresponds to $\Psi':G^{\prime*}\to F$. Notice that $\frak B'$ is the alternating matrix which is associated to the element 
$$\Phi=\sum_{\ell=1}^dx_\ell\mu(h_\ell^*)+\underline{\Psi'}+0\in \textstyle S\otimes_T (\bigwedge^2F\oplus (F\otimes G')\oplus \bigwedge^2G')=S\otimes_T\bigwedge^2(F\oplus G'). $$
So $\Phi^{(\frac{n+d-1}2)}=\Pf(\mathfrak B')\cdot \omega$,
where $\omega$ is a  basis element of $S\otimes_T\textstyle\bigwedge^{n+d-1}(F\oplus G')$. In other words,
\begin{equation}\label{may6-1}o(\Phi^{(\frac{n+d-1}2)})=(\Pf(\mathfrak B')).\end{equation}

The $r$-th divided power of $\Phi$ is given by
\begingroup\allowdisplaybreaks\begin{align}\Phi^{(r)}=\Big(\sum_{\ell=1}^dx_\ell\mu(h_\ell^*)+\underline{\Psi'}\Big)^{(r)}
&=\sum\limits_{i=0}^r \Big[\Big(\sum_{\ell=1}^dx_\ell\mu(h_\ell^*)\Big)^{(i)} \wedge (\underline{\Psi'})^{(r-i)}\Big]\notag\\&\in S\otimes_T\sum\limits_{i=0}^r\Big[\textstyle\bigwedge^{2i}F\wedge \bigwedge^{r-i}(F\otimes G')\Big];\notag\\ 
\intertext{hence, rank considerations yield}  \Phi^{(\frac{n+d-1}2)}=\Big(\sum_{\ell=1}^dx_\ell\mu(h_\ell^*)+\underline{\Psi'}\Big)^{(\frac{n+d-1}2)}
&= \Big(\sum_{\ell=1}^dx_\ell\mu(h_\ell^*)\Big)^{(\frac{n-d+1}2)} \wedge 
(\underline{\Psi'})^{(d-1)}\label{may6-2}\\&\in \textstyle S\otimes_T\bigwedge^{n-d+1}F\wedge \bigwedge^{d-1}
(F\otimes G')\notag\\&\textstyle=S\otimes_T\bigwedge^{n+d-1}(F\oplus G').\notag
\end{align}\endgroup
Combine (\ref{may6-1}) and (\ref{may6-2}), use the rules of divided powers,  and use the fact that if $M$ is a square matrix, then
$$\Pf\left(\begin{array}{c|c}0&M\\\hline-M^{\rm t}&0\end{array}\right)=\det M,$$ to see that
\begin{align*}
&(\Pf(\mathfrak B'))\\=&o\Big[\Big(\sum_{\ell=1}^dx_\ell\mu(h_\ell^*)\Big)^{(\frac{n-d+1}2)} \wedge (\underline{\Psi'})^{(d-1)}\Big]\\
=&o\Big[\sum\limits_{u_1+\dots+u_d=\frac{n-d+1}2}x_1^{u_1}\cdots x_d^{u_d}
(\mu(h_1^*))^{(u_1)}\wedge \cdots\wedge  (\mu(h_d^*))^{(u_d)}\wedge \textstyle(\bigwedge^{d-1}\Psi')(h_1^*\wedge \cdots \wedge h_{d-1}^*)\Big].
\end{align*}
If $u_d=0$, then \begin{align*}&(\mu(h_1^*))^{(u_1)}\wedge \cdots\wedge  (\mu(h_{d-1}^*))^{(u_{d-1})}\wedge \textstyle(\bigwedge^{d-1}\Psi')(h_1^*\wedge \cdots \wedge h_{d-1}^*)\\
=&\tau\Big((\mu(h_1^*))^{(u_1+1)}\wedge (\mu(h_2^*))^{(u_2)}\wedge\cdots\wedge  (\mu(h_{d-1}^*))^{(u_{d-1})}\wedge \textstyle(\bigwedge^{d-2}\Psi')(h_2^*\wedge \cdots \wedge h_{d-1}^*)\Big)\\ \in &\tau(\textstyle\bigwedge^{n+1}F)=0.\end{align*}
So, $(\Pf(\mathfrak B'))$ is equal to
$$o\Big[\sum\limits_{\gf{u_1+\dots+u_d=\frac{n-d+1}2}{1\le u_d}}x_1^{u_1}\cdots x_d^{u_d}
(\mu(h_1^*))^{(u_1)}\wedge \cdots\wedge  (\mu(h_d^*))^{(u_d)}\wedge \textstyle(\bigwedge^{d-1}\Psi')(h_1^*\wedge \cdots \wedge h_{d-1}^*)\Big].$$
The ideal $C(\varphi)$ of Definition~\ref{C-of-phi} is the content of $\Pf(\mathfrak B')$, when $\Pf(\mathfrak B')$ is viewed as an element of $T[x_1,\dots,x_d]$; hence, 
$C(\varphi)$ is the $T$-ideal  generated by
$$\left\{ o\left(
(\mu(h_1^*))^{(u_1)}\wedge \cdots\wedge  (\mu(h_d^*))^{(u_d)}\wedge \textstyle(\bigwedge^{d-1}\Psi')(h_1^*\wedge \cdots \wedge h_{d-1}^*)\right)\left|\begin{array}{l} \sum_{\ell=1}^d u_\ell=\frac{n-d+1}2\\1\le u_d\end{array}\right.\right\}.$$
A quick comparison with (\ref{goth-C}) shows that $\goth C=C(\varphi)$. The proof is complete.
\end{proof}

\end{document}